\tikzstyle{VertexStyle} = [shape = ellipse,
\tikzstyle{EdgeStyle}   = [->,>=stealth']      
\newcommand{\G}{\mathcal{G}}
\newcommand{\I}{\mathcal{I}}
\newcommand{\ind}[3]{\langle #1, #2 \mid #3 \rangle}
\newcommand{\starrightarrow}{\ *\!\!\rightarrow}
\newcommand{\starleftarrow}{\leftarrow\!\! * \ }
\newcommand{\notstarrightarrow}{\ *\!\!\not\rightarrow}
\newcommand{\dsepG}[4]{#1 \perp_d #2 \mid #3\ [#4]}
\newcommand{\deltasepG}[4]{#1 \perp_\delta #2 \mid #3\ [#4]}
\newcommand{\msep}[3]{#1 \perp_m #2 \mid #3}
\newcommand{\msepG}[4]{#1 \perp_m #2 \mid #3\ [#4]}
\newcommand{\musep}[3]{#1 \perp_\mu #2 \mid #3}
\newcommand{\musepG}[4]{#1 \perp_\mu #2 \mid #3\ [#4]}
\newcommand{\disjU}{\mathbin{\dot{\cup}}}
\newcommand*\circled[1]{\tikz[baseline=(char.base)]{
            \node[shape=circle,draw,inner sep=1pt] (char) {#1};}}
\numberwithin{equation}{section}
\theoremstyle{plain}
\theoremstyle{remark}
\newtheorem{thm}{Theorem}[section]
\newtheorem{cor}[thm]{Corollary}
\newtheorem{defn}[thm]{Definition}
\newtheorem{exmp}[thm]{Example}
\newtheorem{lem}[thm]{Lemma}
\newtheorem{prop}[thm]{Proposition}
\begin{document}
	
	\begin{frontmatter}
		
		\title{Markov equivalence of marginalized local independence graphs}
		
		\runtitle{Marginalized local independence graphs}
		
		\begin{aug}
			
			\author{\fnms{S{\o}ren Wengel}
			\snm{Mogensen}\ead[label=e1]{swengel@math.ku.dk}}
			\and
			\author{\fnms{Niels Richard} 
			\snm{Hansen}\ead[label=e2]{niels.r.hansen@math.ku.dk}}
			\address[a]{Department of Mathematical Sciences \\ University of 
			Copenhagen \\
				Universitetsparken 5 \\
				DK-2100 Copenhagen Ø \\
				Denmark \\
				\printead{e1}}
			\address[b]{Department of Mathematical Sciences \\ University of 
			Copenhagen \\
				Universitetsparken 5 \\
				DK-2100 Copenhagen Ø \\
				Denmark \\
				\printead{e2}}
			
			\runauthor{S. W. Mogensen \& N. R. Hansen}
			
			\affiliation{University of Copenhagen}
			
			\thankstext{}{This work was supported by VILLUM FONDEN (grant 
			13358).}
		
			\setattribute{journal}{name}{}
		\end{aug}
		
		\begin{keyword}[class=MSC]
			\kwd[Primary ]{62M99}
			\kwd[; secondary ]{62A99}
		\end{keyword}
		
		\begin{keyword}
			\kwd{directed mixed graphs}
			\kwd{independence model}
			\kwd{local independence}
			\kwd{local independence graph}
			\kwd{Markov equivalence}
			\kwd{$\mu$-separation}
		\end{keyword}
		
		\begin{abstract}
Symmetric independence relations are often stu\-di\-ed using
graphical representations. Ancestral graphs or acyclic directed mixed graphs 
with
$m$-separation provide classes of symmetric graphical independence models that 
are closed
under marginalization. Asymmetric independence relations 
appear naturally for multivariate stochastic processes, for instance
in terms of local independence. However, no
class of graphs representing such asym\-me\-tric independence relations, which 
is also closed under marginalization, has been developed. We develop the theory 
of
directed mixed graphs with $\mu$-separation and show that this provides a
graphical independence model class which is closed under marginalization and 
which gene\-ra\-lizes previously
considered graphical representations of local independence.

Several graphs may encode the same set of independence relations and this means 
that in many cases only an equivalence class of graphs can be identified from 
observational 
data. For statistical applications, it is therefore pivotal to characterize 
graphs that
induce the same independence relations. Our main result is that for directed 
mixed graphs
with $\mu$-separation each equivalence class contains a maximal
element which can be constructed from the independence relations
alone. Moreover, we introduce the directed mixed equivalence graph as
the maximal graph with dashed and solid edges. This graph encodes all
information about the edges that is identifiable from the independence
relations, and furthermore it can be computed efficiently from the maximal 
graph.
		\end{abstract}

	\end{frontmatter}

\section{Introduction}

Graphs have long been used as a formal tool for reasoning with
independence models. Most work has been concerned with symmetric
independence models arising from standard probabilistic independence
for discrete or real-valued random variables.  However, when working
with dynamical processes it is useful to have a notion of independence
that can distinguish explicitly between the present and the past, and
this is a key motivation for considering local independence.

The notion of local independence was introduced for composable Markov processes 
by Schweder \cite{schweder1970} who also gave examples of graphs describing 
local 
independence 
structures. Aalen \cite{aalen1987} discussed how one could extend the 
definition of 
local independence in the broad class of 
semi-martingales using the Doob-Meyer decomposition. 
Several authors have since then used graphs to represent 
local 
independence 
structures of multivariate stochastic process models -- in particular for 
point process models, see e.g. \cite{aalen2012, didelez2000, didelez2006, 
Didelez2008, roysland2012}. 
Local 
independence takes a dynamical point of view in the sense that it 
evaluates the dependence of the present on the past. This provides a natural 
link to statistical causality as cause must necessarily precede 
effect \cite{aalen1987, aalen1980, meek2014, schweder1970}. Furthermore, recent 
work argues that for some applications it can be important to consider
continuous-time
models, rather than only cross-sectional models, when trying to infer causal 
effects 
\cite{aalen2016}.

Local independence for point processes has been applied for data ana\-ly\-sis, 
see 
e.g. 
\cite{aalen1980, jensen1986, xu2016}, but in applications a direct causal
interpretation may be invalid if only certain dynamical
processes are observed while other processes of 
the system under study are unobserved. Allowing for such latent
processes is important for valid causal inference, and this motivates our study
of representations of marginalized local 
independence graphs.

Graphical representations of independence models have also
been stu\-died for time series \cite{eichler2012, eichler2013, 
eichler2007,eichler2010}. In the time series context -- using the
notion of Granger causality -- Eichler \cite{eichler2013} gave an algorithm for
learning a gra\-phi\-cal representation of local independence. However,
the equivalence class of graphs that yield the same local
independences was not identified, and thus the learned graph does not
have any clear causal interpretation. Related research has been
concerned with inferring the graph structure from subsampled time
series, but under the
assumption of no latent processes, see e.g. \cite{danks2013, danks2016}.

In this paper, we 
give a 
formal, graphical framework for handling
the pre\-sen\-ce of unobserved processes and extend the work on graphical 
representations of local 
independence models by formalizing marginalization and giving results on the 
equivalence classes of such graphical representations. The graphical framework 
that we propose is a generalization of that of Didelez \cite{didelez2000, 
didelez2006, Didelez2008}. This development is 
analogous to work on 
margina\-li\-za\-tions of graphical mo\-dels using directed acyclic graphs, 
DAGs. 
Starting 
from a DAG, one can find graphs (e.g. maximal ancestral graphs or acyclic 
directed mixed graphs) that encode marginal independence models 
\cite{cox1996, evans2016, evans2014, koster1999, richardson2002, 
richardson2017, sadeghi2013,  spirtes1997}. One can then 
characterize the equivalence class of graphs that yield the same
independence model \cite{ali2009, zhao2005} -- the so-called Markov equivalent 
graphs -- and construct learning 
algorithms to find such an
equivalence class from data. The purpose of this paper is to develop
the necessary theoretical foundation for learning local independence
graphs by developing a precise characterization of the learnable object: the
class of Markov equivalent graphs. 

The paper is structured as follows: in Section \ref{sec:graph} we discuss 
abstract independence models, relevant
graph-theoretical concepts, and the notion of local
independence and local independence graphs. In Section \ref{sec:DMGs}
we introduce $\mu$-separation for directed mixed graphs, which will be
used to represent marginalized local independence graphs, and we
describe an algorithm to marginalize a given local independence
graph. In Sections \ref{sec:propDMGs} and \ref{sec:MEofDMGs} we
develop the theory of $\mu$-separation for directed mixed graphs
further, and we discuss, in particular, Markov equivalence of such
graphs. All proofs of the main paper are given in the supplementary material. 
Sections \ref{sec:otherSeps} to
\ref{sec:proofs} are in the supplementary material.

\section{Independence models and graph theory}
\label{sec:graph}

Graphical separation criteria as well as probabilistic models
give rise to abstract conditional independence statements. Graphical modeling is
essentially about relating graphical separation to probabilistic
independence. We will consider 
both as instances of abstract {\it independence models}. 

Consider some set $\mathcal{S}$. An {\it independence model},
$\mathcal{I}$, on $\mathcal{S}$ is a set of triples $(A,B,C)$ where
$A,B,C\in \mathcal{S}$, that is,
$\mathcal{I} \subseteq \mathcal{S} \times \mathcal{S} \times
\mathcal{S}$. Mathematically, an independence model is a ternary
relation. In this paper, we will consider independence models {\it
  over} a finite set $V$ which means that
$\mathcal{S} = \mathcal{P}(V)$, the power set of $V$. In this case an
independence model $\I$ is a subset of
$\mathcal{P}(V)\times \mathcal{P}(V)\times \mathcal{P}(V)$. We will
call an element
$s \in \mathcal{P}(V)\times \mathcal{P}(V)\times \mathcal{P}(V)$ an
{\it independence statement} and write $s$ as
$\langle A,B \! \mid \! C\rangle$ for $A, B, C \subseteq V$. This
notation emphasizes that $s$ is thought of as a statement about $A$
and $B$ conditionally on $C$.

Graphical and probabilistic independence 
models have 
been studied in very general settings, though mostly under the assumption of 
symmetry of the independence model, that is,

$$
\ind{A}{B}{C} \in \I \Rightarrow \ind{B}{A}{C} \in \I,
$$

\noindent see e.g. \cite{constantinouExtCondIndep, dawidSeparoids,
lauritzen2017} and 
references 
therein. These works take an abstract axiomatic approach by describing and 
working with a number of 
properties that hold for e.g. models of conditional independence. In this 
paper, we consider independence models that do not 
satisfy the symmetry property as will become evident when we introduce the 
notion of local independence.

\subsection{Local independence}
\label{sec:locind}

We consider a real-valued, multivariate stocha\-stic process 
$$X_t = (X_t^1, X_t^2, \ldots, X_t^n),\quad t \in [0, T]$$
defined on a probability space
$(\Omega, \mathcal{F}, P)$. In this section, the process is a
continuous-time process indexed by a compact time interval. The case
of a discrete time index, corresponding to $X = (X_t)$
being a time series, is treated in Section \ref{supp:ts} in the supplementary 
material. We 
will later identify the coordinate processes of $X$ with the nodes of a graph, 
hence, both are indexed by $V = \{1,2,\ldots,n\}$. As illustrated in
Example \ref{exmp:gw} below, the index set may be chosen in a more
meaningful way for a specific application. In that example, $X_t^I
\geq 0$ is a price process, $X_t^L \in \mathbb{N}_0$ is a counting
process of events, and the remaining four processes take values in
$\{0,1\}$ indicating if an individual at a given time is a regular
user of a given substance. Figure \ref{fig:sample} shows examples
of sample paths for three individuals. 

To avoid technical difficulties, irrelevant for the present paper, we
restrict attention to right-continuous processes with coordinates of finite and
integrable variation on the interval $[0, T]$. This includes most non-explosive 
multivariate counting processes as an important
special case, but also other interesting processes such as 
piecewise-deterministic Markov processes. 

To define local independence below we need a mathematical description
of how the stochastic evolution of one coordinate process depends
infinite\-si\-mally on its own past and the past of the other
processes. To this end, let $\mathcal{F}_t^{C,0}$ denote the 
$\sigma$-algebra generated by $\{X_s^\alpha: s \leq t, \alpha \in C\}$
for $C\subseteq V$. For technical reasons we need to
enlarge this $\sigma$-algebra, and we define $\mathcal{F}_t^{C}$ to be the completion of 
$\cap_{s
  > t} \mathcal{F}_s^{C,0}$ w.r.t. $P$. Thus $(\mathcal{F}_t^{C})$ is
a right-continuous and complete filtration  
which represents the history of the processes indexed by
$C\subseteq V$ until time $t$. Figure \ref{fig:sample_filt} illustrates, in the 
context of 
Example
\ref{exmp:gw}, the filtrations $\mathcal{F}_t^{V}$,
$\mathcal{F}_t^{\{L, M, H\}}$, and $\mathcal{F}_t^{\{T, A, M, H\}}$.

For $\beta \in V$ and $C \subseteq V$ let $\Lambda^{C, \beta}$
denote an  $\mathcal{F}_t^C$-predictable process of finite and
integrable variation such that 
$$E(X^{\beta}_t \mid \mathcal{F}^C_t) -   \Lambda_t^{C, \beta}$$ 
is an $\mathcal{F}_t^C$ martingale. Such a process exists, see Section 
\ref{supp:compensators} for the
technical details, and is usually called
the compensator or the dual predictable projection of
$E(X^{\beta}_t \mid \mathcal{F}^C_t)$. It is in general unique up to
evanescence.

\begin{figure}
\center
\includegraphics[width = \textwidth]{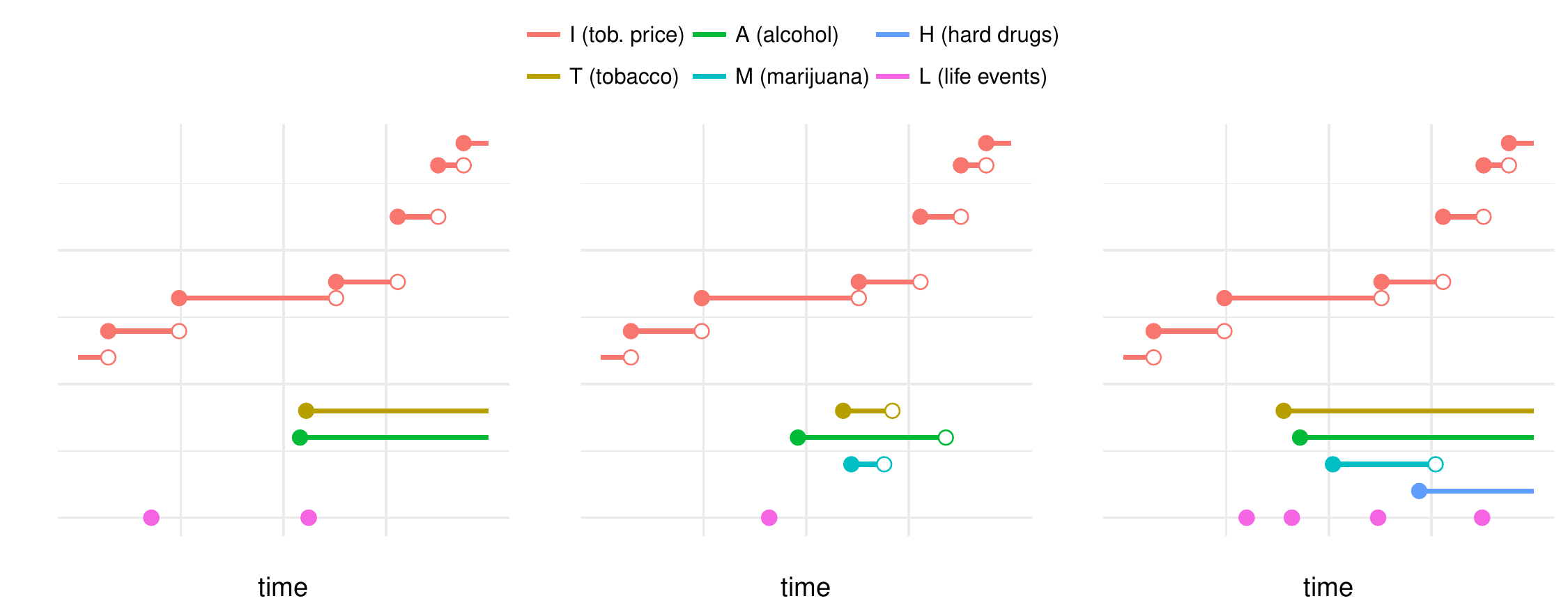}
\caption{Sample paths for three individuals of the processes
  considered in Example \ref{exmp:gw}. The price process (I) is a
  piecewise constant jump process and the life event process (L) is
  illustrated by the event times. The remaining four processes are
  illustrated by the segments of time where the individual is a
  regular user of the substance. The absence of a process, e.g. the
  hard drug process (H) in the left and middle samples, means that the
  individual never used that substance.  \label{fig:sample}}
\end{figure}

\begin{defn}[Local independence]
	Let $A,B,C \subseteq V$. We say that $X^B$ is 
	{\it locally independent} of $X^A$ given $X^C$ if there exists an 
	$\mathcal{F}_t^{C}$-predictable version of
        $\Lambda^{A \cup C, \beta}$
        for all
        $\beta \in B$. We use 
	$A \not \rightarrow B\mid C$ to denote that $X^B$ is 
	locally 
	independent of $X^A$ given $X^C$.
	\label{def:localIndep}
\end{defn}

In words, the process $X^B$ is locally independent of $X^A$ given
$X^C$ if, for each timepoint, the past up until time $t$ of $X^C$
gives us the same {\it predictable} information about $E(X^{\beta}_t \mid
\mathcal{F}^{A \cup C}_t)$ as the past of $X^{A\cup C}$ until time $t$. Note
that when $\beta \in C$, $E(X^{\beta}_t \mid \mathcal{F}^C_t) =
X_t^{\beta}$. 

Local independence was introduced by
Schweder \cite{schweder1970} for composable Mar\-kov processes and extended by 
Aalen
\cite{aalen1987}. Local independence and graphical representations
thereof were later considered by Didelez
\cite{didelez2000,didelez2006,Didelez2008} and by Aalen et al. 
\cite{aalen2012}. Didelez \cite{didelez2006} also discussed local independence 
models of composable finite Markov processes under some specific types of 
margina\-li\-zation.
Commenges and G{\'e}gout-Petit \cite{commenges2009, gegoutpetit2010} discussed 
definitions
of local independence in classes of semi-martingales. Note that
Definition \ref{def:localIndep} allows a process to be separated from
itself by some conditioning set $C$, generalizing the definition used
by e.g. Didelez \cite{Didelez2008}.

\begin{figure}
\center
\includegraphics[width = \textwidth]{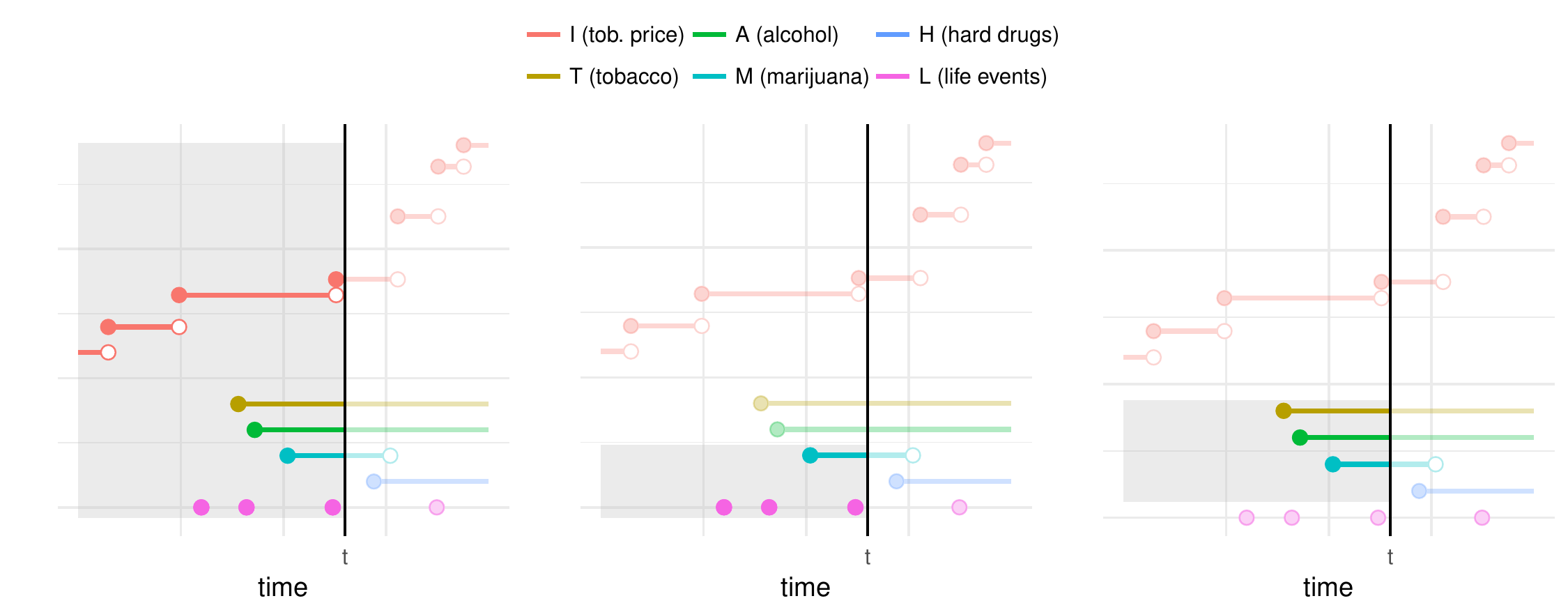}
\caption{Illustration of the past at time $t$ as captured by
  different filtrations for a single sample path of processes from
  Example \ref{exmp:gw}. The filtration $\mathcal{F}_t^{V}$ (left)
  captures the past of all processes, while $\mathcal{F}_t^{\{L, M,
    H\}}$ (middle) captures the past of  $L$, $M$, and $H$ only, and
  $\mathcal{F}_t^{\{T, A, M, H\}}$ (right) captures the past of $T$, $A$, $M$, and $H$. \label{fig:sample_filt}}
\end{figure}

Local independence defines the independence model
$$
\I = \{\ind{A}{B}{C} \mid X^B \text{ is locally independent of } X^A 
\text{ given } 
X^C \}
$$

\noindent such that the local independence statement $A\not\rightarrow
B\mid C$ is equivalent to $\langle A,B\mid C\rangle \in \I$ in the abstract notation. 
We note that the local independence model is generally not symmetric. Using 
Definition \ref{def:localIndep}, we introduce below an associated directed 
graph in 
which there is no directed edge from a node $\alpha$ to a node $\beta$ if and 
only $\beta$ is locally independent of $\alpha$ given $V\setminus \{\alpha\}$. 

\begin{defn}[Local independence graph]
For the local independence model determined by $X$, we define the {\it local 
	independence graph} to be the directed graph, $\mathcal{D}$, with nodes $V$ 
	such that 
	for $\alpha,\beta 
	\in V$
$$
\alpha \not\rightarrow_\mathcal{D} \beta \Leftrightarrow \alpha \not\rightarrow 
\beta \mid V\setminus \{\alpha\}
$$				

\noindent where $\alpha \not\rightarrow_\mathcal{D}\beta$ denotes that there is 
no directed edge from $\alpha$ to $\beta$ in the graph $\mathcal{D}$. 
\label{def:LIG}
\end{defn}	

Didelez \cite{didelez2000} gives almost the same definition of a local 
independence graph, however, in essence always assumes that there is a 
dependence of each process on its own past. See also Sections 
\ref{sec:otherSeps} and \ref{sec:examp}.

The local independence graph induces an independence model by
$\mu$-separation as defined below. The main goal of the present paper
is to provide a graphical representation of the induced independence
model for a subset of coordinate processes corresponding to the case where some
processes are unobserved. This is achieved by establishing a
correspondence, which is preserved under marginalization, between
directed mixed graphs and independence models induced via
$\mu$-separation. We emphasize that the correspondence only relates
local independence to graphs when the local independence model
satisfies the global Markov property with respect to a graph. 

The local independence model satisfies the global Markov property with
respect to the local independence graph if every $\mu$-separation in the graph 
implies a
local independence. This has been shown for point processes under some
mild regularity conditions \cite{Didelez2008} using the slightly
different notion of $\delta$-separation. Section \ref{sec:otherSeps}
discusses how $\delta$-separation is related to $\mu$-separation, and
Section \ref{sec:examp} shows how to translate the global Markov
property of \cite{Didelez2008} into our framework. Moreover, general
sufficient conditions for the global Markov property were given in
\cite{mogensen2018} covering point processes as well as certain
diffusion processes. Section \ref{supp:ts} provides, in addition, a
discussion of Markov properties in the context of time series. 

To help develop a better understanding of local independence and its
relevance for applications, we discuss an example of drug abuse progression.

\begin{exmp}[Gateway drugs]
	The theory of {\it gateway drugs} has been discussed for many years in the 
	literature on substance abuse \cite{kandel1975, vanyukov2012}. In short, 
	the theory 
	posits that the use of ``soft'' and often licit drugs precedes (and 
	possibly 
	leads to) later 
	use of ``hard'' drugs.	Alcohol, tobacco, and marijuana have all been 
		discussed as candidate gateway drugs to ``harder'' drugs such as heroin.

		\begin{figure}
			\resizebox{0.3\textwidth}{!}{
				\begin{tikzpicture}[every node/.style={scale = 1}, every 
				loop/.style={min 
					distance=1mm,in=60,out=90,looseness=10}] 
				\node (A) {A};
				\node (T) [right of=A] {T};
				\node (M) [below right = .5cm and .075cm of A] {M};
				\node (H) [below of=M] {H};
				\node (L) [left of=M] {L};
				\node (I) [above of=T] {I};
				\path
				(A) [->]edge node {} (T)
				(T) [->]edge [bend left=20] node {} (M)
				(L) [->]edge [bend left = 20] node {} (A)
				(A) [->]edge [bend left = 20] (L)
				(A) [->]edge node {} (M)
				(M) [->]edge node {} (L)
				(M) [->]edge node {} (H)
				(L) [->]edge [bend left = 10] node {} (H)
				(H) [->]edge [bend left = 20] node {} (L)
				(I) [->]edge node {} (T)
				(M) [->]edge [bend left=20] node {} (T)
				(A) [->] edge [loop above, looseness = 5] (A)
				(I) [->] edge [loop right, looseness = 5] (I)
				(T) [->] edge [loop right, looseness = 5] (T)
				(L) [->] edge [loop left, looseness = 5] (L)
				(M) [->] edge [loop right, looseness = 5] (M)
				(H) [->] edge [loop right, looseness = 5] (H)
				;
				\end{tikzpicture}}
			\caption{The directed graph of Example
                          \ref{exmp:gw} illustrating a model where
                          marijuana ($M$) potentially acts as a gateway
                          drug, while alcohol ($A$) as well as
                          tobacco ($\, T$) do not directly affect 
                          hard drug use.}
			\label{fig:gwDG}
		\end{figure}
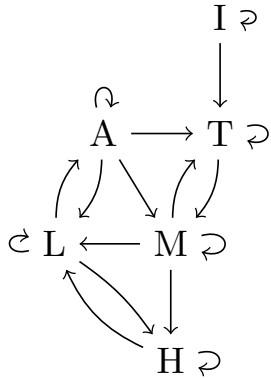
	
	We propose a hypothetical, dynamical model of transitions into
        abuse via a gateway drug, and 
	more 
	generally, a model of substance abuse progression. Substance abuse is 
	known to be 
	associated with social 
	factors, genetics, and other individual and environmental factors 
	\cite{who2004}. Substance abuse can evolve over time when an individual 
	starts or stops using some drug. In this example, we consider
	substance processes Alcohol ($A$), Tobacco ($T$), Marijuana ($M$), and Hard 
	drugs 
	($H$) modeled as zero-one processes, that is, stochastic processes 
	that are piecewise constantly equal to zero (no substance use) or one 
	(substance use). We also include $L$, a process describing 
	life events, and a process $I$, which can be thought 
	of as an exogenous process that influences the tobacco consumption of 
	the individual,
	e.g. the price of tobacco which may change due to changes in tobacco 
	taxation. Let $V = \{A, T, M, H, L, I\}$.
	
	We will visualize each process as a node in a graph and draw
        an arrow from one process to another if the first has a direct
        influence on the second. We will not go into a full discussion
        of how to formalize ``influence'' in terms of a continuous-time causal 
        dynamical model as this would lead us astray, see instead
        \cite{Didelez2008,didelez2015,sokol2014}. The upshot is
        that for a (faithful) causal model, there is no direct influence if and 
        only if $\alpha \not\rightarrow \beta \mid V\setminus
        \{\alpha\}$, which identifies the ``influence'' graph with the
        local independence graph.

	Several formalizations of the gateway drug 
	question are possible. We 
	will focus on the questions ``is the use of hard drugs locally independent 
	of use of alcohol for some conditioning set?'' and ``is the use of hard 
	drugs 
	locally independent of 
	the use of tobacco for some conditioning set?''. Using the dynamical nature 
	of local independence, we are asking if e.g. the past alcohol usage
	changes the hard drug usage propensity when accounting for the past of 
	all 
	other 
	processes in the model. This is one possible formalization of the gateway 
	drug question as a negative answer would mean that there exist some 
	gateway 
	processes through which any influence of alcohol usage on hard drug usage
	is mediated. If the visualization in Figure \ref{fig:gwDG} is indeed a 
	local independence graph in the above sense we see that 
	conditioning on all other processes, $H$ is indeed locally independent of 
	$A$ 
	and locally independent of $T$. In this hypothetical scenario we could 
	interpret this as marijuana in fact acting as a gateway drug to hard drugs. 
	If the global Markov property holds, we can furthermore use 
	$\mu$-separation to obtain further local independences from the graph.
	We return to this example in Section \ref{sec:dmeg} to illustrate how 
		the 
		main results of the paper can be applied. In particular, we are 
		interested in what conclusions we can make when we do not observe all 
		the processes but only a subset.
				
		\label{exmp:gw}
\end{exmp}

\subsection{Marginalization and separability}

\begin{defn}[Marginalization]
Given an independence model $\mathcal{I}$ over $V$, 
the marginal independence model over $O\subseteq V$ is 
defined as 
$$
\mathcal{I}^O = \{ \langle A, B\mid C \rangle \mid \langle A, B\mid C \rangle
\in \mathcal{I};\ A,B,C \subseteq O  \}.
$$
\end{defn}

Marginalization is defined abstractly above, though we are primarily
interested in the marginalization of the independence model
encoded by a local independence graph via $\mu$-separation. The main
objective is to obtain a graphical representation of such a
marginalized independence model involving only the nodes
$O$. To this end, we consider the notion of separability in an independence 
model. 

\begin{defn}[Separability]
	Let $\mathcal{I}$ be an independence model 
	over $V$. Let $\alpha, \beta \in V$. We say that $\beta$ 
	is {\it separable} from $\alpha$ 
	if there exists $C\subseteq V\setminus \{\alpha\}$ such that 
	$\ind{\alpha}{\beta}{C} \in \I$, and 
	otherwise we say that $\beta$ is {\it 
		inseparable} from $\alpha$. We define
	
	$$s(\beta, \mathcal{I}) = \{\gamma\in V\mid \text{$\beta$ is 
		separable from $\gamma$}\}.$$
	
	\noindent We also define $u(\beta, \mathcal{I}) = V\setminus 
	s(\beta, 
	\mathcal{I})$.
	\label{def:sep}
\end{defn}

We show in Proposition \ref{prop:musepDG} that if $\mathcal{I}$ is the
independence model induced by a directed graph via $\mu$-separation,
then $\alpha \in u(\beta, \mathcal{I})$ if and only if there is a directed edge 
from $\alpha$ to $\beta$. In this case the graph is thus directly
identifiable from separability properties of $\mathcal{I}$. That is,
however, not true in general for a marginalization of
$\mathcal{I}$, and this is the motivation for developing a theory of
directed \emph{mixed} graphs with $\mu$-separation.

\subsection{Graph theory}

A graph, $\mathcal{G} = (V,E)$, is an ordered pair where $V$ is a finite set of 
vertices (also called nodes) and $E$ is a finite set of edges. Furthermore, 
there is a map that to 
each edge assigns a pair of nodes (not necessarily 
distinct). We say that the edge is {\it between} these two nodes.
We consider graphs with two types of edges: directed ($\rightarrow$) and
bidirected ($\leftrightarrow$). We can think of the edge 
set as a disjoint 
union, $E = E_d \disjU E_b$, 
where $E_d$ is a set of ordered pairs of nodes $(\alpha, \beta)$ 
corresponding to directed edges, and $E_b$ is a set of 
unordered 
pairs of nodes $\{\alpha, \beta\}$ corresponding to bidirected
edges. This implies that the edge $\alpha \leftrightarrow \beta$ is 
identical to the edge $\beta \leftrightarrow \alpha$, but the edge $\alpha 
\rightarrow \beta$ is different from the edge $\beta\rightarrow \alpha$. It 
also implies that the graphs 
we consider can have multiple edges between a pair of nodes $\alpha$ and 
$\beta$, but they will always be a subset of the 
edges $\{\alpha \rightarrow \beta, \alpha \leftarrow \beta, \alpha 
\leftrightarrow \beta\}$.

\begin{defn}[DMG]
	A {\it directed mixed graph} (DMG), $\mathcal{G} = (V,E),$ is a graph with 
	node set $V$ and edge set $E$ consisting of directed and bidirected edges 
	as described above. 
	\label{def:DMG}
\end{defn}

Throughout the paper, $\G$ will denote a DMG with node set $V$
and edge set $E$. Occasionally, we will also use $\mathcal{D}$ and 
$\mathcal{M}$ to denote DMGs. We use $\mathcal{D}$ only when the DMG is also a 
directed graph, that is, has no bidirected edges. We use $\mathcal{M}$ to 
stress that some 
DMG is obtained as a marginalization of a DMG 
on 
a 
larger node set. We will use notation such as
$\leftrightarrow_{\mathcal{G}}$ or $\rightarrow_{\mathcal{D}}$ to denote the 
specific
graph that an edge belongs to.

If $\alpha \rightarrow \beta$, we say that the edge has a 
{\bf tail} at $\alpha$ and a {\bf head} at $\beta$. Jointly tails and heads are 
called (edge) {\bf marks}. An edge $e\in E$ between 
nodes $\alpha$ and $\beta$ is a {\bf loop} if $\alpha = \beta$. We also say 
that the edge is {\bf incident} with the 
node $\alpha$ and with the node $\beta$ and that $\alpha$ and $\beta$ are {\bf 
	adjacent}.

For $\alpha, \beta \in V$ we use the notation $\alpha \sim \beta$ to
denote a generic edge of any type between $\alpha$ and $\beta$. We use
the notation $\alpha\ *\!\!\rightarrow \beta$ to indicate an edge that
has a head at $\beta$ and may or may not have a head at
$\alpha$. Note that the presence of one edge, $\alpha
\rightarrow \beta$, say, does not in general
preclude the presence of other edges between these two nodes. Finally, $\alpha 
\notstarrightarrow_\mathcal{G} \beta$ means
that there is no edge in $\G$ between $\alpha$ and $\beta$ that has a
head at $\beta$ and $\alpha\not\rightarrow_\G\beta$ means that there is no 
directed edge from $\alpha$ to $\beta$. Note that 
$\alpha\not\rightarrow_\G\beta$ is a statement about the absence of an edge in 
the graph $\G$ and to avoid confusion with local independence, 
$\alpha\not\rightarrow\beta\mid C$, we 
always include the conditioning set when writing local independence statements, 
even if $C = \emptyset$ (see also Definition \ref{def:LIG}). 

We say that $\alpha$ is a {\bf parent} of $\beta$ in the graph
$\mathcal{G}$ if $\alpha \rightarrow \beta$ is present
in $\mathcal{G}$ and that $\beta$ is a {\bf child} of
$\alpha$. We say that $\alpha$ is a {\bf sibling} of $\beta$ (and that
$\beta$ is a sibling of $\alpha$) if $\alpha \leftrightarrow \beta$ is
present in the graph. The motivation of the term sibling will be
explained in Section \ref{sec:DMGs}. We use $\mathrm{pa}(\alpha)$ to denote the 
set of parents of $\alpha$.
	
	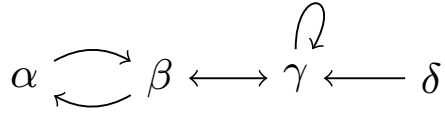
\begin{figure}
		\resizebox{0.5\textwidth}{!}{
			\begin{tikzpicture}[every node/.style={scale = 1}, every 
			loop/.style={min 
				distance=1mm,in=60,out=90,looseness=10}] 
			\node (a) {$\alpha$};
			\node (b) [right of=a] {$\beta$};
			\node (c) [right of=b] {$\gamma$};
			\node (d) [right of=c] {$\delta$};
			\path
			(a) [->] edge [bend left] (b)
			(a) [<-]edge [bend right] (b)
			(b) [<->] edge (c)
			(c) [->]edge [loop above] (c)
			(d) [->]edge (c);
			\end{tikzpicture}}
		\caption{A directed mixed graph with node set $\{\alpha, \beta, \gamma, 
			\delta\}$. Consider first the walk $\alpha \rightarrow \beta$. This 
			is different from the walk $\beta \leftarrow \alpha$ as walks are 
			ordered. Consider instead the two walks $\beta \leftrightarrow 
			\gamma 
			\leftarrow \gamma \leftarrow \delta$ and $\beta \leftrightarrow 
			\gamma \rightarrow \gamma \leftarrow \delta$. These two walks have 
			the same (ordered) sets of nodes and edges but are not equal 
			as the loop at $\gamma$ has different orientations between the two 
			walks. Furthermore, one can note that for the first of the two 
			walks, $\gamma$ is a collider in 
			the first instance, but not in the second. The walks $\alpha 
			\rightarrow \beta \rightarrow \alpha$ and $\alpha \rightarrow \beta 
			\leftarrow \alpha$ are both cycles, and the second is an example of 
			the fact that the same edge can occur twice in a cycle.}
		\label{fig:walkWithLoop}
	\end{figure}

	A {\bf walk} is an ordered, alternating sequence of vertices, $\gamma_i$, 
	and 
	edges, 
	$e_j$, denoted $\omega = \langle \gamma_1, e_1, \ldots, e_n, \gamma_{n+1} 
	\rangle$, such 
	that 
	each $e_i$ is between $\gamma_i$ and $\gamma_{i+1}$, along with an 
	orientation of each directed loop along the walk (if $e_i$ is
        a loop then we also know if $e_i$ 
	points in the direction of $\gamma_1$ or in the direction of 
	$\gamma_{n+1}$).  Without the orientation, for instance the walks $\alpha 
	\rightarrow \beta \rightarrow 
	\beta \rightarrow \gamma$ 
	and 
	$\alpha \rightarrow \beta \leftarrow \beta \rightarrow \gamma$ would be 
	indistinguishable. See Figure 
	\ref{fig:walkWithLoop} for examples.  We will often present the walk $\omega$ using the notation
	
	$$
	\gamma_1 \overset{e_1}{\sim} \gamma_2 \overset{e_2}{\sim} \ldots 
	\overset{e_n}{\sim} \gamma_{n+1},
	$$
	
	\noindent where the loop orientation is explicit. We will omit
        the edge superscripts when they are not needed. 

	We say 
	that the walk $\omega$ {\it contains} 
	nodes $\gamma_i$ and edges $e_j$. The {\bf length} of the walk is $n$, the 
	number of 
	edges 
	that it contains. We define a {\bf trivial walk} to be a 
	walk with no edges, and therefore only a single node. Equivalently, a 
	trivial 
	walk can be defined as a walk of length zero.  A {\bf subwalk}
        of $\omega$ is 
	either 
	itself a 
	walk of the form $\langle \gamma_k, e_k, \ldots, e_{m-1}, \gamma_m\rangle$ 
	where $1\leq k < m \leq n+1$ or a trivial walk $\langle \gamma_k \rangle$, 
	$1\leq k \leq n+1$.  
	A 
	(nontrivial) walk is 
	uniquely 
	identified 
	by its edges, and the ordering and orientation of these edges, hence the 
	vertices can be omitted 
	when describing the walk. At times we will omit the edges to simplify 
	notation, 
	however, we will always have a specific, uniquely identified walk in mind 
	even 
	when the edges and/or their orientation is omitted. The first and last 
	nodes of 
	a 
	walk are called {\bf 
		endpoint nodes} (these could be equal) or just endpoints, and we say 
		that a walk is {\it 
		between} its 
		endpoints, or alternatively {\it from} its first node {\it to} its last 
		node. We call the walk $\omega^{-1} = \langle \gamma_{n+1}, e_n, \ldots, e_1, \gamma_1 \rangle$ 
		the {\bf inverse} walk of $\omega$. Note that the orientation of directed loops is also 
		reversed in the inverse walk such that they point towards $\gamma_1$ in 
		the inverse if 
		and only if they point towards $\gamma_1$ in the original walk. A 
		{\bf 
			path} is a walk on which no node is 
			repeated. 

Consider a walk $\omega$ and a subwalk thereof,
			$\langle \alpha, e_1, \gamma, e_2, \beta\rangle$, where 
			$\alpha,\beta,\gamma \in 
			V$ and $e_1, e_2\in E$. If $e_1$ and $e_2$ both have heads at 
			$\gamma$ 
			then $\gamma$ is a {\bf collider} on $\omega$. If this is not the 
			case, then 
			$\gamma$ is a noncollider. Note that an endnode of a walk is 
			neither a 
			collider, nor a noncollider. We stress that the property of being 
			a 
			collider/noncollider is relative to a
                        walk (see also Figure \ref{fig:walkWithLoop}).
			
			Let $\omega_1 = \langle \alpha, e_1^1,\gamma_1^1 \ldots, 
			\gamma_{n-1}^1, e_n^1, 
			\beta 
			\rangle$ and $\omega_2 = \langle \alpha, e_1^2,\gamma_1^2 \ldots, 
			\gamma_{m-1}^2, e_m^2, \beta 
			\rangle$ be two (nontrivial) walks. We say 
			that they are 
			{\bf 
				endpoint-identical} if $e_1^1$ and $e_1^2$ have the same mark 
				at $\alpha$ and 
				$e_n^1$ and $e_m^2$ have the same mark at $\beta$. Note that 
				this may depend on the orientation of directed edges in the two 
				walks. Assume that 
				some edge $e$ is 
				between $\alpha$ and $\beta$. We say that the (nontrivial) 
				walk 
				$\omega_1$ is 
				endpoint-identical to $e$ if it is endpoint-identical to 
				the walk 
				$\langle \alpha, e, \beta \rangle$. If $\alpha = \beta$ and $e$ 
				is directed this 
				should hold for just one of the possible orientations of $e$. 
				
				Let $\omega_1$ be a walk between $\alpha$ and $\gamma$, and 
				$\omega_2$ a walk 
				between $\gamma$ and $\beta$. The {\bf composition} of 
				$\omega_1$ with 
				$\omega_2$ is the walk that starts at $\alpha$, traverses every 
				node and edge 
				of $\omega_1$, and afterwards every node and edge of 
				$\omega_2$, ending in 
				$\beta$. We say that we {\bf compose} $\omega_1$ with 
				$\omega_2$.
				
				A {\bf directed path} from $\alpha$ to $\beta$ is a path 
				between $\alpha$ and 
				$\beta$ consisting of edges of type $\rightarrow$ only 
				(possibly of length 
				zero) such that they all point in the direction of $\beta$. A 
				{\bf cycle} is either a loop, or a (nontrivial) path from 
				$\alpha$ to 
				$\beta$ composed with $\beta \sim \alpha$. 
				This means that in a cycle of length 2, an edge can be 
				repeated. A 
				{\bf directed 
					cycle} is either a loop, $\alpha \rightarrow \alpha$, or a 
					(nontrivial) 
					directed path from $\alpha$ to $\beta$ 
					composed with $\beta \rightarrow \alpha$.
					For $\alpha \in 
					V$ we let $An(\alpha)$ denote the set of {\bf ancestors}, 
					that is,
					
					\[
					An(\alpha) = \{\gamma \in V \mid \text{there is a directed 
					path 
					from $\gamma$ to $\alpha$ } \}.
					\]
					
					\noindent This is generalized to non-singleton sets 
					$C\subseteq V$,
					
					\[
					An(C) = \cup_{\alpha\in C} An(\alpha).
					\]
					
					\noindent We stress that $C \subseteq An(C)$ as we allow 
					for trivial
					directed paths in the 
					definition of an ancestor. We use the notation 
					$An_\mathcal{G}(C)$ if we wish 
					to emphasize in which graph 
					the ancestry is read, but omit the subscript when no 
					ambiguity arises.
					
					Let $\mathcal{G} = (V,E)$ be a graph, and let $O \subseteq 
					V$. Define  the 
						{\bf subgraph} induced by $O$ to be the graph 
						$\mathcal{G}_O 
						= (O, E_O)$ where 
						$E_O\subseteq E$ 
						is the set of edges that are between nodes in $O$. If 
						$\G_1 = (V,E_1)$ and 
						$\G_2 = (V,E_2)$, we will write $\G_1 \subseteq \G_2$ 
						to denote $E_1 
						\subseteq E_2$ and say that $\G_2$ is a {\bf 
						supergraph} of $\G_1$.

						A {\bf directed graph} (DG), $\mathcal{D} = 
						(V,E)$, 
						is a graph with only 
						directed 
						edges. Note that this also allows directed loops. 
						Within a class of graphs, we 
						define the 
						{\bf complete} graph to be the graph 
						which is the supergraph of all graphs in the class
						when such a graph 
						exists. For the class of DGs on 
						node set $V$, the complete graph is the graph 
						with edge set $E = \{ (\alpha,\beta) \mid 
						\alpha,\beta\in V \}$.
						
									A {\bf 
										directed acyclic graph} (DAG) is a DG 
										with no loops
									and no directed cycles. An {\bf 
										acyclic directed mixed graph} (ADMG) is 
										a DMG with no loops and 
									no 
									directed cycles.

\section{Directed mixed graphs and separation}
\label{sec:DMGs}

In this section we introduce $\mu$-separation for DMGs which are then
shown to be closed under marginalization. In particular, we obtain a DMG 
representing the 
independence model arising from a local
independence graph via marginalization.  

The class of DMGs contains as a subclass the ADMGs that have no directed cycles
\cite{evans2014, richardson2003}.  ADMGs have been used to represent
marginalized DAG models, analogously to how we will use DMGs to
represent margina\-lized DGs. ADMGs come with the
$m$-separation criterion which can be extended to DMGs, but this 
criterion differs 
in important ways from the $\mu$-separation criterion introduced below. These 
differences also mean that our main result on Markov equivalence does not apply 
to e.g. DMGs with $m$-separation and thus our theory of Markov equivalence 
hinges on the fact that we are considering DMGs using the asymmetric notion of 
$\mu$-separation. 

\subsection{$\mu$-separation}

We define $\mu$-separation as a generalization of $\delta$-sepa\-ra\-tion 
introduced by 
Didelez 
\cite{didelez2000}, analogously to how $m$-separation is a gene\-ralization of 
$d$-separation, see e.g. \cite{richardson2002}. In Section 
\ref{sec:otherSeps} we make the connection to Didelez's $\delta$-separation 
exact 
and 
elaborate further on this in Section \ref{sec:examp}.

\begin{defn}[$\mu$-connecting walk]
	A nontrivial walk $$\langle \alpha,e_1,\gamma_1,\ldots, 
	\gamma_{n-1},e_n,\beta\rangle$$
	in $\G$ 
	is said to be $\mu$-connecting (or simply {\it open}) 
	from $\alpha$ to $\beta$ 
	given $C$ if $\alpha\notin C$, every collider is in $An(C)$, no 
	noncollider is in $C$, and $e_n$ has a head at $\beta$.
	\label{def:muConnSeq}
\end{defn}

When a walk is not $\mu$-connecting given $C$, we say that it is {\it 
closed} 
or {\it blocked} by $C$. One should note that if $\omega$ is a $\mu$-connecting 
walk from
$\alpha$ to $\beta$ given $C$, the inverse walk, $\omega^{-1}$, is not
in general $\mu$-connecting from $\beta$ to $\alpha$ given $C$. The requirement 
that a
$\mu$-connecting walk be nontrivial, that is, of strictly positive
length, leads to the possibility of a node being separated from itself
by some set $C$ when applying the following graph separation criterion to
the class of DMGs.

\begin{defn}[$\mu$-separation] Let $A,B,C \subseteq V$. We say that $B$ is 
  $\mu$-separated from $A$ given $C$ if there is no $\mu$-connecting walk 
  from any $\alpha \in A$ to any $\beta\in B$ given $C$ and write 
  $\musep{A}{B}{C}$, or write
  $\musepG{A}{B}{C}{\G}$ if we want to stress to what graph the separation 
  statement applies.
\label{def:muSep} 
\end{defn}

The above notion of separation is given in terms of walks of which there are 
infinitely many in any DMG with a nonempty edge set. However, we will see that 
it is sufficient to consider a finite subset of walks from $A$ to $B$ 
(Proposition \ref{prop:muConnPath}). 

Given a DMG, $\mathcal{G} = (V,E)$, we define an 
independence model over 
$V$ using 
$\mu$-separation,

$$
\mathcal{I}(\mathcal{G}) = \{\langle A,B \mid C \rangle \mid 
\musep{A}{B}{C} \}.
$$

\noindent Definition \ref{def:muConnSeq} implies 
$\musep{A}{B}{C}$ 
whenever $A \subseteq C$ and therefore $\I(\G) \neq \emptyset$.

Below we state two propositions that essentially both give equivalent ways of 
defining $\mu$-separation. The propositions are useful when proving 
results on $\mu$-separation models.

\begin{prop}
	Let $\alpha,\beta\in V$, $C\subseteq V$. If there is a $\mu$-connecting 
	walk from $\alpha$ 
	to $\beta$ given $C$, then there is a $\mu$-connecting walk from $\alpha$ 
	to $\beta$ 
	that furthermore satisfies that every collider is in $C$.
	\label{prop:muConnAllCollInC}
\end{prop}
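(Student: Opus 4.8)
The plan is to start from a given $\mu$-connecting walk and repeatedly reroute any collider that lies in $An(C)\setminus C$ through a directed path into $C$, with each reroute strictly decreasing a nonnegative integer potential.

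Concretely, let $\omega$ be a $\mu$-connecting walk from $\alpha$ to $\beta$ given $C$, and let $N(\omega)$ be the number of occurrences of colliders on $\omega$ that are not in $C$. If $N(\omega)=0$ we are done. Otherwise, fix an occurrence of a collider $\gamma$ on $\omega$ with $\gamma\notin C$. Since this occurrence is a collider we have $\gamma\in An(C)$, and since $\gamma\notin C$ there is a nontrivial directed path from $\gamma$ into $C$; choose one of minimal length, say $\pi : \gamma=\delta_0\rightarrow\delta_1\rightarrow\cdots\rightarrow\delta_k=c$, so that $c\in C$ while $\delta_0,\dots,\delta_{k-1}\notin C$. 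Splitting $\omega$ at the chosen occurrence of $\gamma$ as a composition $\omega_1\omega_2$, with $\omega_1$ from $\alpha$ to $\gamma$ and $\omega_2$ from $\gamma$ to $\beta$, let $\omega'$ be the composition $\omega_1\,\pi\,\pi^{-1}\,\omega_2$.

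I would then check that $\omega'$ is again $\mu$-connecting from $\alpha$ to $\beta$ given $C$ and that $N(\omega')=N(\omega)-1$. The rerouted occurrence of $\gamma$ is internal to $\omega$ (an endpoint is never a collider), so $\omega'$ has the same endpoints, the same first edge, and the same last edge as $\omega$; hence $\alpha\notin C$ and the last edge of $\omega'$ has a head at $\beta$. The only occurrences whose collider status differs between $\omega$ and $\omega'$ lie in the inserted segment: the single occurrence of $\gamma$ is replaced by two occurrences, each having one incident edge with a head at $\gamma$ (inherited from $\omega$) and one incident edge, $\gamma\rightarrow\delta_1$, with a tail at $\gamma$, hence a noncollider; each $\delta_i$ with $1\le i\le k-1$ is traversed as $\rightarrow\delta_i\rightarrow$ on $\pi$ and as $\leftarrow\delta_i\leftarrow$ on $\pi^{-1}$, hence a noncollider; and $c$ is traversed as $\rightarrow c\leftarrow$, hence a collider. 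All these new noncollider occurrences use nodes outside $C$ (namely $\gamma,\delta_1,\dots,\delta_{k-1}$), so no noncollider of $\omega'$ lies in $C$; and every collider of $\omega'$ is either a collider of $\omega$, hence in $An(C)$, or equals $c\in C\subseteq An(C)$. Thus $\omega'$ is $\mu$-connecting. Finally $N(\omega')=N(\omega)-1$, since the bad collider occurrence at $\gamma$ has been removed and the only collider occurrence created is $c$, which lies in $C$.

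Iterating at most $N(\omega)$ times then yields a $\mu$-connecting walk from $\alpha$ to $\beta$ given $C$ on which every collider lies in $C$, which is the claim. The step I expect to need the most care is the local bookkeeping at the two splice points: one must verify that rerouting converts the collider $\gamma$ into two noncolliders — harmless exactly because $\gamma\notin C$ — and introduces precisely one new collider, $c$, which lies in $C$. Minimality of $\pi$ is what keeps the interior detour nodes out of $C$, so that no new blocking noncollider in $C$ is created.
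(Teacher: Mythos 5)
Your proof is correct and follows essentially the same route as the paper's: replace each collider occurrence in $An(C)\setminus C$ by a detour $\pi\pi^{-1}$ along a directed path into $C$, turning that occurrence into two noncolliders outside $C$ and creating exactly one new collider inside $C$. Your use of a minimal-length path to keep the detour's interior out of $C$, and of the counter $N(\omega)$ to guarantee termination, just makes explicit what the paper states informally.
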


\begin{defn} A {\it route} from $\alpha$ to $\beta$ is a walk
  from $\alpha$ to $\beta$ such that no node different from 
  $\beta$ occurs more than
  once, and $\beta$ occurs at most twice.
\end{defn}

A route is always
a path, a cycle, or a composition of a path and a cycle that share
no edge and only share the vertex $\beta$.

\begin{prop}
  Let $\alpha, \beta \in V, C\subseteq V$. If $\omega$ is a $\mu$-connecting 
  walk
  from $\alpha$ to $\beta$ given $C$, then there is a $\mu$-connecting
  route from $\alpha$ to $\beta$ given $C$ consisting of edges in $\omega$. 
	\label{prop:muConnPath}
\end{prop}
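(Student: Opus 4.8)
The plan is to prove this by an extremal argument. Among all $\mu$-connecting walks from $\alpha$ to $\beta$ given $C$ that use only edges occurring in $\omega$, I would pick one of minimal length, say $\omega^\ast = \langle \gamma_1, e_1,\ldots,e_n,\gamma_{n+1}\rangle$ with $\gamma_1 = \alpha$ and $\gamma_{n+1} = \beta$. Such a walk exists: $\omega$ itself belongs to this set, and since $\omega$ has only finitely many edges there are only finitely many walks of each length built from them, so the set of lengths that occur is a nonempty set of positive integers. It then suffices to show that $\omega^\ast$ is necessarily a route, because its edges are among those of $\omega$. So suppose $\omega^\ast$ is not a route. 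Then either some node other than $\beta$ occurs at least twice on $\omega^\ast$, or $\beta$ occurs at least three times; in both cases there are indices $i<j$ with $\gamma_i = \gamma_j =: v$ and $j \le n$ (if $v\neq\beta$ this is automatic since $\gamma_{n+1}=\beta$; if $v=\beta$ then at least two occurrences of $\beta$ lie among positions $1,\ldots,n$, and I take two of those).

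I would then form the shortcut $\omega'$ by composing the subwalk $\langle\gamma_1,e_1,\ldots,e_{i-1},\gamma_i\rangle$ with the subwalk $\langle\gamma_j,e_j,\ldots,e_n,\gamma_{n+1}\rangle$ at the common node $v$. This $\omega'$ runs from $\alpha$ to $\beta$, uses only edges of $\omega^\ast$, has $e_n$ as its last edge (here $j\le n$ is used), which still has a head at $\beta$, and is strictly shorter than $\omega^\ast$ since $e_i,\ldots,e_{j-1}$ have been deleted. A contradiction with minimality then reduces to checking that $\omega'$ is still $\mu$-connecting from $\alpha$ to $\beta$ given $C$. The condition $\alpha\notin C$ is inherited, and every interior node of $\omega'$ except the merged copy of $v$ keeps the incident walk-edges it had on $\omega^\ast$, so its collider/noncollider requirement is inherited. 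The only real work concerns the merged copy of $v$, and only when $i\ge 2$ (when $i=1$ we have $v=\alpha$, an endpoint of $\omega'$). Its incident edges on $\omega'$ are $e_{i-1}$ and $e_j$. If it is a noncollider on $\omega'$, then one of $e_{i-1},e_j$ has a tail at $v$; in the first case $v$ is already a noncollider at position $i$ of $\omega^\ast$, in the second at position $j$, so $v\notin C$ either way. If it is a collider on $\omega'$ (both $e_{i-1}$ and $e_j$ have heads at $v$), I must show $v\in An(C)$: if $e_i$ has a head at $v$ then $v$ is a collider at position $i$ of $\omega^\ast$ and hence in $An(C)$, and similarly if $e_{j-1}$ has a head at $v$; so the remaining sub-case is that $e_i$ and $e_{j-1}$ both have \emph{tails} at $v$.

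This remaining sub-case is the step I expect to be the main obstacle, since naively collapsing a repetition could create a collider outside $An(C)$; I would handle it by mining a directed walk out of the deleted portion of $\omega^\ast$. On $\langle\gamma_i,e_i,\ldots,e_{j-1},\gamma_j\rangle$ the edge $e_i$ is the directed edge $v\rightarrow\gamma_{i+1}$ traversed in its own direction, while $e_{j-1}$ is $v\rightarrow\gamma_{j-1}$, traversed \emph{against} its direction (and $j\ge i+2$ in this sub-case, since an edge traversed from one endpoint to another cannot present a tail at both). Hence there is a largest $m$ with $i\le m\le j-2$ for which $e_i,\ldots,e_m$ are each traversed along their own direction, and then $e_{m+1}$ is not, so $e_{m+1}$ has a head at $\gamma_{m+1}$; since $e_m=\gamma_m\rightarrow\gamma_{m+1}$ also has a head there, $\gamma_{m+1}$ is a collider on $\omega^\ast$ and therefore lies in $An(C)$. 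As $v\rightarrow\gamma_{i+1}\rightarrow\cdots\rightarrow\gamma_{m+1}$ is a directed walk, $v\in An(\gamma_{m+1})\subseteq An(An(C))=An(C)$, as required. With this, $\omega'$ is $\mu$-connecting, contradicting the minimality of $\omega^\ast$, so $\omega^\ast$ is a route.

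One remark on the overall strategy: I do not expect to use Proposition~\ref{prop:muConnAllCollInC} here — and in fact it does not seem usable — because the walk it produces routes colliders through $C$ via auxiliary directed paths and so need not consist of edges of $\omega$, whereas the present statement insists on staying within the edge set of $\omega$. The directed-walk extraction in the last paragraph is exactly what replaces that proposition in this argument, and it is the only nonroutine point; everything else is bookkeeping about which marks are inherited when a repeated vertex is contracted.
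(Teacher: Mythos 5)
Your proof is correct and follows essentially the same approach as the paper's: contract a repeated node, then check that the merged occurrence inherits its collider/noncollider requirement, with the only delicate case being a newly created collider. Your directed-walk extraction in that case is precisely the justification for the paper's brief assertion that such a node is ``the ancestor of a collider on $\omega$''; the extremal (minimal-length) framing in place of the paper's explicit iteration is a cosmetic difference.
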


If there is a $\mu$-connecting walk from $A$ to $B$ given $C$, it does not in 
general follow 
that we can also find a $\mu$-connecting path or cycle from $A$ to $B$ given 
$C$. As an 
example of this, consider the following DMG on nodes $\{\alpha,\beta,\gamma\}$: 
$\alpha \leftarrow \beta \leftarrow \gamma$.  There is a $\mu$-connecting walk 
from $\alpha$ to $\beta$ given $\emptyset$, and a $\mu$-connecting route, but 
no $\mu$-connecting path from $\alpha$ to $\beta$ given 
$\emptyset$.

\subsection{Marginalization of DMGs}

Given a DG or a DMG, $\G$, we are interested 
in 
finding a graph that represents the marginal independence model over a node set 
$O\subseteq V$, i.e., finding a graph $\mathcal{M}$ such that

\begin{equation}
\mathcal{I}(\mathcal{M}) = (\mathcal{I}(\mathcal{G}))^O.
\label{eq:margEq}
\end{equation}

It is well-known that the class of DAGs with $d$-separation is not closed under 
marginalization, 
i.e. for a DAG, $\mathcal{D} = (V,E)$, and $O 
\subsetneq V$, it is not in general possible to find a DAG with node set $O$ 
that encodes the same independence model among the variables in $O$ as did the 
original graph. Richardson and Spirtes \cite{richardson2002} gave a concrete 
counterexample and in 
Exam\-ple \ref{exmp:nonClosedDG} we give a similar example to make the 
analogous point: DGs read with $\mu$-separation are not closed under 
marginalization. In this example, we use the following proposition 
which gives a simple characterization of separability in DGs.

\begin{prop}
	Consider a DG, $\mathcal{D} = (V,E)$, and let $\alpha, \beta \in V$. Then 
	$\beta$ is $\mu$-separable (see Definition \ref{def:sep}) 
	from 
	$\alpha$ in 
	$\mathcal{D}$ if and only 
	if $\alpha \not\rightarrow_\mathcal{D} \beta$.
	\label{prop:musepDG}
\end{prop}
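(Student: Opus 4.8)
The statement is an if-and-only-if, so I would prove the two directions separately. The easy direction is the forward one stated contrapositively: if $\alpha \rightarrow_\mathcal{D} \beta$, then $\beta$ is not $\mu$-separable from $\alpha$. Indeed, for any $C \subseteq V \setminus \{\alpha\}$ the single-edge walk $\langle \alpha, \alpha \rightarrow \beta, \beta \rangle$ has no colliders and no noncolliders (its only internal nodes would be none), has a head at $\beta$, and satisfies $\alpha \notin C$; hence it is $\mu$-connecting from $\alpha$ to $\beta$ given $C$. So no conditioning set separates $\beta$ from $\alpha$, i.e. $\beta \in u(\alpha$-side$)$, meaning $\beta$ is inseparable from $\alpha$. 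This handles one implication with essentially no work.

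For the converse I would prove the contrapositive: if $\alpha \not\rightarrow_\mathcal{D} \beta$, then $\beta$ is $\mu$-separable from $\alpha$, and the natural candidate conditioning set is $C = V \setminus \{\alpha\}$ (or, to be safe, $C = \mathrm{pa}(\beta) \setminus \{\alpha\}$, but let me first try $V \setminus \{\alpha\}$). The plan is to show that no walk from $\alpha$ to $\beta$ is $\mu$-connecting given this $C$. Take any nontrivial walk $\omega = \langle \alpha, e_1, \gamma_1, \ldots, e_n, \beta \rangle$ and consider its last edge $e_n$, between $\gamma_{n-1}$ (or $\alpha$ if $n=1$) and $\beta$. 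Since $\mathcal{D}$ is a directed graph, the only way $e_n$ can have a head at $\beta$ is if $e_n$ is the directed edge $\gamma_{n-1} \rightarrow \beta$. If $n = 1$ this edge is $\alpha \rightarrow \beta$, contradicting $\alpha \not\rightarrow_\mathcal{D} \beta$; so $n \geq 2$ and the penultimate node $\gamma_{n-1}$ is distinct from... well, not necessarily distinct from $\beta$, but the point is $\gamma_{n-1} \in V \setminus \{\alpha\}$ would be fine only if $\gamma_{n-1} \neq \alpha$. Here is the subtlety: $\gamma_{n-1}$ could equal $\alpha$, giving the walk $\alpha \rightarrow \beta$ again (if the only edge is $e_n$) — but that's the $n=1$ case. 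For $n \geq 2$, $\gamma_{n-1}$ is an internal (non-endpoint) node of $\omega$, and we examine whether it is a collider or noncollider on $\omega$ via the subwalk $\langle \gamma_{n-2}, e_{n-1}, \gamma_{n-1}, e_n, \beta \rangle$. Since $e_n = \gamma_{n-1} \rightarrow \beta$ has a \emph{tail} at $\gamma_{n-1}$, the node $\gamma_{n-1}$ is a noncollider on $\omega$. For $\omega$ to be $\mu$-connecting we would need no noncollider in $C = V \setminus \{\alpha\}$, i.e. $\gamma_{n-1} \in \{\alpha\}$, i.e. $\gamma_{n-1} = \alpha$ — but then $e_n = \alpha \rightarrow \beta$, contradiction again.

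So the argument reduces cleanly to: the final edge of any $\mu$-connecting walk into $\beta$ must be a directed edge pointing at $\beta$ (because $\mathcal{D}$ has only directed edges), its source is a noncollider on the walk, and that source must lie outside $C$; taking $C = V \setminus \{\alpha\}$ forces the source to be $\alpha$, hence forces the edge $\alpha \rightarrow \beta$, which is absent by hypothesis. Therefore no $\mu$-connecting walk from $\alpha$ to $\beta$ given $V \setminus \{\alpha\}$ exists, so $\beta$ is $\mu$-separable from $\alpha$.

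The only place requiring slight care — the ``main obstacle,'' such as it is — is the degenerate handling of loops and length-one walks: one must separately note that the length-one walk into $\beta$ is exactly the edge $\alpha \rightarrow \beta$, and that a directed loop $\beta \rightarrow \beta$ as a final edge forces the source to be $\beta \in C$, hence still a noncollider in $C$ (or, if $\beta = \alpha$, we'd need $\alpha \not\rightarrow_\mathcal{D} \beta$ to already fail). I would also remark that one could equivalently use the smaller set $C = \mathrm{pa}_\mathcal{D}(\beta)$, which is sometimes more convenient, but $V \setminus \{\alpha\}$ suffices and keeps the proof shortest. Invoking Proposition~\ref{prop:muConnPath} to restrict attention to routes rather than arbitrary walks is optional here since the argument only ever inspects the last edge, but it could be cited to streamline the writeup.
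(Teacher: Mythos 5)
Your proof is correct and is essentially the paper's argument: the paper's one-line proof rests on the fact that $\musep{\alpha}{\beta}{\mathrm{pa}(\beta)}$ always holds (because in a DG the last edge of any connecting walk must be a directed edge into $\beta$ whose source is a noncollider lying in $\mathrm{pa}(\beta)$), which witnesses separability exactly when $\alpha\not\rightarrow_{\mathcal{D}}\beta$. Your choice of $C=V\setminus\{\alpha\}$ instead of $\mathrm{pa}(\beta)$ is the equivalent variant you yourself note, and your handling of the length-one and loop cases matches the paper's remark that a vertex can be its own parent.
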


\begin{exmp}
	\begin{figure}
		\resizebox{0.5\textwidth}{!}{
			\begin{tikzpicture}[every node/.style={scale = 1}, every 
			loop/.style={min 
				distance=1mm,in=60,out=90,looseness=10}] 
			\node (a) {$\alpha$};
			\node (b) [right of=a] {$\beta$};
			\node (c) [right of=b] {$\gamma$};
			\node (d) [right of=c] {$\delta$};
			\node (e) [above right = .25cm of b] {$\varepsilon$};
			\path
			(a) [->]edge node {} (b)
			(d) [->]edge [loop above] (d)
			(d) [->]edge (c)
			(e) [->]edge (b)
			(e) [->]edge (c);
			\end{tikzpicture}}
		\caption{The directed graph of Example \ref{exmp:nonClosedDG} which 
			exemplifies that DGs are not closed under marginalization.}
		\label{fig:DG}
	\end{figure}
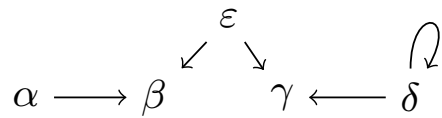
	
	Consider the directed graph, $\mathcal{G}$, in Figure \ref{fig:DG}. We wish 
	to show that it is not possible to encode the $\mu$-separations among nodes 
	in 
	$O=\{\alpha,\beta,\gamma,\delta\}$ using a DG on these nodes only. To 
	obtain a contradiction, assume 
	$\mathcal{D} = 
	(O,E)$ is a DG such that 
	
	\begin{align}
		\musepG{A}{B}{C}{\mathcal{D}} \Leftrightarrow 
		\musepG{A}{B}{C}{\G}
		\label{eq:margDGexample}
	\end{align}

	\noindent for $A,B,C \subseteq O$. There is no $C \subseteq O\setminus 
	\{\alpha\}$ such that $\musepG{\alpha}{\beta}{C}{\G}$ and no $C \subseteq 
	O\setminus \{\beta\}$ such that $\musepG{\beta}{\gamma}{C}{\G}$. If 
	$\mathcal{D}$ has the property (\ref{eq:margDGexample}) then it follows 
	from Proposition \ref{prop:musepDG} that $\alpha \rightarrow_\mathcal{D} 
	\beta$ and $\beta 
	\rightarrow_\mathcal{D} \gamma$. However, 
	then $\gamma$ is not $\mu$-separated from $\alpha$ given 
	$\emptyset$ in $\mathcal{D}$. This shows that there exists no DG, 
	$\mathcal{D}$, that satisfies (\ref{eq:margDGexample}).
	
	\label{exmp:nonClosedDG}
\end{exmp}

We note that marginalization of a
probability model does not only 
impose conditional independence constraints on the observed variables but also 
so-called equality and 
inequality constraints, see e.g. \cite{evans2016} and references therein. In 
this paper, we will only be concerned with the graphical representation of 
local 
independence constraints, and not with representing analogous equality or 
inequality 
constraints.

In the remainder of this section, we first introduce 
the {\it latent 
projection} of a graph, see also
\cite{vermaEquiAndSynthesis} and \cite{richardson2017}, and then show
that it provides a marginalized DMG in the sense of 
(\ref{eq:margEq}). At the end of the section, we give an algorithm 
for computing the latent projection of a DMG. 
This algorithm is an adapted version of one described by Sadeghi 
\cite{sadeghi2013} for 
a 
different class of graphs. Koster \cite{koster1999} described a similar 
algorithm for ADMGs.

\begin{defn}[Latent projection]
	Let $\mathcal{G} = (V,E)$ be a DMG, $V = M \disjU O$. We define the {\it 
	latent projection of $\G$ on $O$} to be the DMG $(O, D)$ such that $\alpha 
	\sim \beta \in D$ if and only 
	if 
	there exists an endpoint-identical (and nontrivial) walk between $\alpha$ 
	and $\beta$ in 
	$\G$ 
	with no colliders and such that every non-endpoint node is in $M$. Let 
	$m(\G, O)$ denote the latent projection of $\G$ on $O$.
\end{defn}

The definition of latent projection motivates the graphical term {\it sibling} 
for 
DMGs, as 
one way to obtain an edge $\alpha \leftrightarrow \beta$ is through 
a latent projection of 
a larger graph in which $\alpha$ and $\beta$ share a parent.

To characterize the class of graphs obtainable from a DG via a latent
projection, we introduce the {\it canonical DG of the DMG
  $\mathcal{G}$}, $\mathcal{C}(\mathcal{G})$, as follows: for each (unordered) 
  pair of nodes $\{\alpha, \beta\} \subseteq V$  such that
$\alpha \leftrightarrow_\mathcal{G} \beta$, add a distinct auxiliary node,
$m_{\{\alpha, \beta\}}$, add edges
$m_{\{\alpha, \beta\}} \rightarrow \alpha, m_{\{\alpha, \beta\}}\rightarrow 
\beta$ to $E$, and then remove all
bidirected edges from $E$. If 
$\mathcal{D}$ 
is any DG, then
$\mathcal{M} = m(\mathcal{D}, O)$ will satisfy

\begin{align}
	\alpha \leftrightarrow_\mathcal{M} \beta \Rightarrow \alpha 
	\leftrightarrow_\mathcal{M} 
	\alpha 
	\text{ for all } \alpha,\beta \in O
	\label{eq:mDGprop}
\end{align}

\noindent for all subsets of vertices $O$. Conversely, if $\G = (V, E)$ is a 
DMG that satisfies 
(\ref{eq:mDGprop}), then $\mathcal{G}$ is the latent projection of its
canonical DG; $m(\mathcal{C}(\mathcal{G}), V) = \mathcal{G}$. The class of DMGs 
that satisfy (\ref{eq:mDGprop}) is closed under marginalization (Proposition 
\ref{prop:closedMarg}) and has certain regularity properties (see e.g. 
Proposition \ref{prop:selfEdgeChar}). These result provide the 
means for graphically representing marginals of local independence graphs. However, the theory that leads to our 
main results on Markov equivalence does not require the property 
(\ref{eq:mDGprop}) and therefore we develop it for general DMGs.

\begin{prop}
	Let $O \subseteq V$.  The graph 
	$\mathcal{M} = m(\mathcal{G}, O)$ is a DMG. If $\G$ satisfies 
	(\ref{eq:mDGprop}), 
	then 
	$\mathcal{M}$ does as well.
	\label{prop:closedMarg}
\end{prop}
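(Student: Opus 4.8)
The plan is to treat the two assertions separately; the first is a formality and the second is where the hypothesis (\ref{eq:mDGprop}) is actually used.

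For "$\mathcal{M}=m(\G,O)$ is a DMG" I would simply observe that, by the definition of the latent projection, $D$ is a set of edges---each of which is directed or bidirected---between vertices of the finite set $O$, so $(O,D)$ is a DMG. If a more explicit justification is wanted, note that a walk with no colliders whose first edge has a \emph{tail} at its first endpoint $\alpha$ must consist entirely of directed edges all pointing away from $\alpha$ (an immediate induction: the no-collider condition at each internal node forces the next edge to have a tail there as well), and hence its last edge has a head at its last endpoint; so no walk is endpoint-identical to a hypothetical "tail-at-both-ends'' edge, and $D$ genuinely contains only directed and bidirected edges.

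For the closure of property (\ref{eq:mDGprop}) the plan is as follows. Assume $\G$ satisfies (\ref{eq:mDGprop}) and let $\alpha,\beta\in O$ with $\alpha\leftrightarrow_{\mathcal{M}}\beta$; I must exhibit a walk in $\G$ certifying $\alpha\leftrightarrow_{\mathcal{M}}\alpha$. By the definition of the latent projection there is a nontrivial, collider-free walk $\omega=\langle\alpha,e_1,\gamma_1,\ldots,e_n,\beta\rangle$ in $\G$ that is endpoint-identical to the edge $\alpha\leftrightarrow\beta$ and has every non-endpoint node in $M$; in particular $e_1$ has a head at $\alpha$. I would then split on the type of $e_1$. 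If $e_1$ is bidirected, say $\alpha\leftrightarrow_{\G}\gamma_1$, then property (\ref{eq:mDGprop}) for $\G$ gives the loop $\alpha\leftrightarrow_{\G}\alpha$ directly, and this single edge is itself a nontrivial, collider-free walk between $\alpha$ and $\alpha$ with no non-endpoint nodes and endpoint-identical to $\alpha\leftrightarrow\alpha$, so $\alpha\leftrightarrow_{\mathcal{M}}\alpha$. If instead $e_1$ is directed, then since it has a head at $\alpha$ it must be $\gamma_1\to\alpha$, hence has a tail at $\gamma_1$; a quick check rules out $n=1$ (that would force $e_1$ to also have a head at $\beta$, i.e. to be bidirected) and rules out $\gamma_1=\alpha$ (a non-endpoint node of $\omega$ lies in $M=V\setminus O$ and so cannot be $\alpha\in O$), so $\gamma_1\in M$. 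Then the "out-and-back'' walk $\omega^{*}=\langle\alpha,e_1,\gamma_1,e_1,\alpha\rangle$ does the job: each of its two endpoint copies of $e_1$ presents a head at $\alpha$, so $\omega^{*}$ is endpoint-identical to $\alpha\leftrightarrow\alpha$; at its only non-endpoint node $\gamma_1$ both incident edges are the copy of $e_1$, which has a tail at $\gamma_1$, so $\gamma_1$ is not a collider; and $\gamma_1\in M$. In either case $\mathcal{M}$ satisfies (\ref{eq:mDGprop}).

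I do not foresee a genuine obstacle: the real content is recognizing that (\ref{eq:mDGprop}) is exactly what makes the bidirected-first-edge case go through, while the directed case needs nothing more than a trivial detour and back. The only points requiring care are the bookkeeping with the paper's walk conventions---checking that $\langle\alpha,e_1,\gamma_1,e_1,\alpha\rangle$ is a legitimate walk and that the two-tails configuration at $\gamma_1$ indeed means it is not a collider---and disposing of the small edge-cases ($n=1$ and $\gamma_1=\alpha$) so that $\gamma_1\in M$ is available.
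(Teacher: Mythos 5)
Your proof is correct and follows essentially the same route as the paper's: the first claim is disposed of by noting the latent projection never creates a tail--tail edge, and for (\ref{eq:mDGprop}) you split on whether the first edge of the witnessing walk is bidirected at $\alpha$ (invoke (\ref{eq:mDGprop}) in $\G$) or directed into $\alpha$ from some $\gamma_1\in M$ (use the walk $\alpha\leftarrow\gamma_1\rightarrow\alpha$), which is exactly the paper's argument. Your extra bookkeeping (ruling out $n=1$ and $\gamma_1=\alpha$) just makes explicit steps the paper leaves implicit.
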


\begin{prop}
	Assume that $\mathcal{G}$ satisfies (\ref{eq:mDGprop}) and let $\alpha\in 
	V$. 
	Then 
	$\alpha$ has no loops if and only if $\musep{\alpha}{\alpha}{V\setminus 
	\{\alpha\}}$.
	\label{prop:selfEdgeChar}
\end{prop}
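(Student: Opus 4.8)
The plan is to prove the two directions separately, putting essentially all of the work into ``if $\alpha$ has no loops then $\musep{\alpha}{\alpha}{V\setminus\{\alpha\}}$'' and disposing of the converse in one line. For the converse I would argue the contrapositive: if $\alpha$ has a loop $e$ (directed or bidirected), then, choosing for a directed loop the orientation pointing toward the second copy of $\alpha$, the length-one walk $\langle\alpha,e,\alpha\rangle$ has a head at its terminal node and no interior node, so it is $\mu$-connecting from $\alpha$ to $\alpha$ given any set not containing $\alpha$, in particular given $V\setminus\{\alpha\}$; hence $\alpha$ is not $\mu$-separated from itself given $V\setminus\{\alpha\}$.

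For the main direction I would assume $\alpha$ has no loops and suppose, toward a contradiction, that some walk is $\mu$-connecting from $\alpha$ to $\alpha$ given $C:=V\setminus\{\alpha\}$. First I would invoke Proposition \ref{prop:muConnPath} to replace it by a $\mu$-connecting route $\omega$ given $C$; since a nontrivial route from a node to itself is a cycle through that node, I can write $\omega=\langle\alpha,e_1,\delta_1,\ldots,\delta_{n-1},e_n,\alpha\rangle$ with $\delta_1,\ldots,\delta_{n-1}$ pairwise distinct and all different from $\alpha$, and with $n\ge 2$ because $\alpha$ has no loop. The key observation is then that every interior node $\delta_i$ lies in $C$, and a $\mu$-connecting walk has no noncollider in its conditioning set, so each $\delta_i$ must be a collider on $\omega$; in particular both $e_{n-1}$ and $e_n$ have heads at $\delta_{n-1}$. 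Since $\omega$ is $\mu$-connecting into $\alpha$, the edge $e_n$ also has a head at $\alpha$, so $e_n$ has a head at each of its two distinct endpoints $\delta_{n-1}$ and $\alpha$, i.e., $e_n$ is the bidirected edge $\delta_{n-1}\leftrightarrow\alpha$. Property (\ref{eq:mDGprop}) then forces $\alpha\leftrightarrow_{\G}\alpha$, contradicting the hypothesis that $\alpha$ has no loops, and the proof is complete.

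I do not expect a serious obstacle here. The only points needing care are the reduction of an arbitrary $\mu$-connecting walk to a cycle through $\alpha$ (using Proposition \ref{prop:muConnPath} together with the classification of routes into paths, cycles, and their compositions) and the bookkeeping of edge marks — in particular making sure the degenerate short cycles, namely a loop at $\alpha$ or a length-two cycle in which a single edge may be traversed twice, are correctly subsumed by the argument. The conceptual heart, and the part I would state most carefully, is the implication ``all interior nodes lie in $V\setminus\{\alpha\}$ $\Rightarrow$ they are all colliders $\Rightarrow$ the last edge is bidirected $\Rightarrow$ (\ref{eq:mDGprop}) supplies a bidirected self-loop at $\alpha$''.
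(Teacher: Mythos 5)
Your proof is correct and follows essentially the same route as the paper's: both directions hinge on the observation that the last edge of a $\mu$-connecting walk into $\alpha$ would need a head at $\alpha$, and that together with the collider/noncollider status of the preceding node (which lies in $V\setminus\{\alpha\}$) this forces a bidirected edge at $\alpha$, which (\ref{eq:mDGprop}) converts into a bidirected loop. The paper phrases this directly (the previous node is a noncollider in the conditioning set, so the walk is blocked) rather than by contradiction via the route reduction of Proposition \ref{prop:muConnPath}, but the substance is the same.
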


We also observe directly from the definition that the latent
projection operation preserves ancestry and non-ancestry in the
following sense.

\begin{prop}
	Let $O \subseteq V$, $\mathcal{M} = 
	m(\mathcal{G}, O)$ and $\alpha, \beta \in O$. Then $\alpha \in 
	An_\mathcal{G}(\beta)$ if and only if $\alpha \in An_\mathcal{M}(\beta)$.
	\label{prop:preserveAn}
\end{prop}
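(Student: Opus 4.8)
The plan is to prove the two implications separately, both resting on a description of which walks in $\mathcal{G}$ produce directed edges in $\mathcal{M} = m(\mathcal{G},O)$. The key observation is that a nontrivial walk $\langle \gamma_1, e_1, \ldots, e_n, \gamma_{n+1}\rangle$ in $\mathcal{G}$ with no colliders whose first edge has a tail at $\gamma_1$ and whose last edge has a head at $\gamma_{n+1}$ is necessarily a directed walk, i.e., each $e_i$ is a directed edge traversed from $\gamma_i$ to $\gamma_{i+1}$. This follows by induction along the walk: a tail occurs only on a directed edge (bidirected edges have heads at both ends), so $e_1$ is directed and has a head at $\gamma_2$; since $\gamma_2$ is a noncollider, $e_2$ must have a tail at $\gamma_2$, hence is directed with a head at $\gamma_3$; iterating gives the claim. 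Deleting the subwalks between repeated nodes (in particular, excising any directed loops) turns such a walk into a directed path from $\gamma_1$ to $\gamma_{n+1}$, so $\gamma_1 \in An_{\mathcal{G}}(\gamma_{n+1})$; conversely every directed path is a walk of this form. Combined with the definition of latent projection, this shows that for $\alpha,\beta\in O$ one has $\alpha\rightarrow_{\mathcal{M}}\beta$ if and only if there is a directed walk from $\alpha$ to $\beta$ in $\mathcal{G}$ all of whose non-endpoint nodes lie in $M = V\setminus O$.

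For the implication $\alpha\in An_{\mathcal{G}}(\beta)\Rightarrow\alpha\in An_{\mathcal{M}}(\beta)$, I would take a directed path $\alpha = \delta_0\rightarrow\cdots\rightarrow\delta_k=\beta$ in $\mathcal{G}$ (the case $\alpha=\beta$ being trivial) and let $\delta_{i_0}=\alpha,\delta_{i_1},\ldots,\delta_{i_r}=\beta$ list in order those $\delta_j$ lying in $O$. For each $s$, the sub-path from $\delta_{i_s}$ to $\delta_{i_{s+1}}$ is a nontrivial directed walk whose non-endpoint nodes all lie in $M$, so by the observation above it witnesses $\delta_{i_s}\rightarrow_{\mathcal{M}}\delta_{i_{s+1}}$. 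Composing these edges gives a directed walk, and hence a directed path, from $\alpha$ to $\beta$ in $\mathcal{M}$, so $\alpha\in An_{\mathcal{M}}(\beta)$.

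For the converse, I would take a directed path $\alpha = \varepsilon_0\rightarrow_{\mathcal{M}}\cdots\rightarrow_{\mathcal{M}}\varepsilon_k=\beta$ in $\mathcal{M}$ (again trivial if $\alpha=\beta$). By definition of $m(\mathcal{G},O)$, each edge $\varepsilon_s\rightarrow_{\mathcal{M}}\varepsilon_{s+1}$ arises from a nontrivial no-collider walk in $\mathcal{G}$ endpoint-identical to $\varepsilon_s\rightarrow\varepsilon_{s+1}$, which by the observation is a directed walk; hence $\varepsilon_s\in An_{\mathcal{G}}(\varepsilon_{s+1})$. Chaining these directed paths in $\mathcal{G}$ yields $\alpha\in An_{\mathcal{G}}(\beta)$. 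I do not expect a serious obstacle: the one point that needs care is the structural observation, in particular the bookkeeping of edge marks at directed loops using the orientation the walk carries; the rest is routine composition of directed paths.
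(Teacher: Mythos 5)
Your argument is correct, and it is essentially the paper's (the paper gives no explicit proof, stating only that the claim is "observed directly from the definition" of latent projection; your write-up spells out exactly that observation). The key structural fact you isolate — that a collider-free walk with a tail at its first node is forced to be a directed walk, so tail-to-head projected edges correspond precisely to directed $\mathcal{G}$-walks through $M$ — is the right one, and both implications then follow by the composition/trimming you describe.
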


The main result of this section is the following theorem, which 
states that the marginalization defined by the latent 
projection operation preserves the marginal independence model encoded by a 
DMG. 

\begin{thm}
	Let $O \subseteq V$, $\mathcal{M} = 
	m(\mathcal{G}, 
	O)$. Assume $A,B,C 
	\subseteq O$. Then
	
	\[
	\musepG{A}{B}{C}{\G} \Leftrightarrow 
	\musepG{A}{B}{C}{\mathcal{M}}.
	\]
	\label{thm:DMGmarg}
\end{thm}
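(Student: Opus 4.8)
The plan is to prove both implications by establishing a correspondence between $\mu$-connecting walks in $\mathcal{G}$ and $\mu$-connecting walks in $\mathcal{M} = m(\mathcal{G}, O)$. By Proposition~\ref{prop:muConnPath} it suffices to work with $\mu$-connecting routes rather than arbitrary walks, and by Proposition~\ref{prop:muConnAllCollInC} we may further assume every collider on such a route lies in $C$ itself; together these reduce the problem to a manageable combinatorial correspondence. Throughout, the key structural fact is that an edge $\alpha \sim \beta$ in $\mathcal{M}$ corresponds (by definition of the latent projection) to an endpoint-identical, collider-free walk in $\mathcal{G}$ through nodes in $M = V \setminus O$, and conversely such a walk in $\mathcal{G}$ produces exactly one edge in $\mathcal{M}$.

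For the direction $(\Leftarrow)$: suppose there is a $\mu$-connecting route $\omega_{\mathcal{M}}$ from some $\alpha \in A$ to some $\beta \in B$ given $C$ in $\mathcal{M}$. Each edge of $\omega_{\mathcal{M}}$ unfolds into a collider-free, endpoint-identical walk through $M$-nodes in $\mathcal{G}$; concatenating these gives a walk $\omega_{\mathcal{G}}$ in $\mathcal{G}$ from $\alpha$ to $\beta$. I need to check this walk is $\mu$-connecting given $C$: its final edge has a head at $\beta$ because the corresponding $\mathcal{M}$-edge did and endpoint-identity preserves marks; the newly introduced $M$-nodes are noncolliders on $\omega_{\mathcal{G}}$ and lie outside $C \subseteq O$, so they cause no violation; the old nodes (elements of $O$) retain their collider/noncollider status because endpoint-identity at the splice points preserves the relevant edge marks, and colliders must lie in $An_{\mathcal{G}}(C)$ — here I invoke Proposition~\ref{prop:preserveAn} to transfer ancestry from $\mathcal{M}$ back to $\mathcal{G}$. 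One subtlety: a node of $O$ that is a noncollider on $\omega_{\mathcal{M}}$ might a priori become a collider on $\omega_{\mathcal{G}}$ if the unfolded walks meet it with two heads; but endpoint-identity of each unfolded walk guarantees the marks at the endpoints match the original $\mathcal{M}$-edge marks, so this cannot happen.

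For the direction $(\Rightarrow)$: suppose there is a $\mu$-connecting walk in $\mathcal{G}$ from $\alpha \in A$ to $\beta \in B$ given $C$; by Proposition~\ref{prop:muConnPath} take a $\mu$-connecting route $\omega_{\mathcal{G}}$. I will partition $\omega_{\mathcal{G}}$ at its $O$-nodes into maximal segments whose interior nodes all lie in $M$. Each such segment is a walk between two $O$-nodes; the crux is to argue that each segment can be replaced by (or already is endpoint-identical to) a \emph{collider-free} walk through $M$, which then corresponds to a single edge in $\mathcal{M}$. A segment can contain at most one collider in its $M$-interior — if it contained one, that collider would need to be in $An_{\mathcal{G}}(C)$, and I can reroute through a directed path to $C$; but since $C \subseteq O$, such a detour exits $M$, so the segment must actually be split further or handled by the route structure. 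After reducing each segment to a collider-free $M$-walk, contract it to its $\mathcal{M}$-edge; assembling these yields a walk $\omega_{\mathcal{M}}$ in $\mathcal{M}$, which one checks is $\mu$-connecting given $C$: the $O$-nodes retain their status (again by endpoint-identity), colliders remain ancestors of $C$ by Proposition~\ref{prop:preserveAn}, and the final mark at $\beta$ is a head.

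The main obstacle is the $(\Rightarrow)$ direction, specifically handling $M$-interior colliders on $\omega_{\mathcal{G}}$: a $\mu$-connecting route in $\mathcal{G}$ may legitimately pass through a collider $m \in M$ that is an ancestor of $C$ via a directed path that itself passes through $M$-nodes, and the latent projection only creates bidirected or directed edges for \emph{collider-free} walks. The resolution is to combine the walk with the ancestral directed path to $C$ — replacing the portion around the collider so that $m$ becomes a noncollider on a rerouted walk, at the cost of lengthening it — and to iterate this until no $M$-interior colliders remain, using that $C \subseteq O$ so the directed paths eventually reach $O$. Making this iteration terminate and verifying that $\mu$-connectedness is preserved at each step (in particular that no \emph{new} blocked noncolliders in $C$ are introduced, which is why working with colliders-in-$C$ form via Proposition~\ref{prop:muConnAllCollInC} and with routes is essential) is the technical heart of the argument.
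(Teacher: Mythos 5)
Your proposal follows essentially the same route as the paper: unfold $\mathcal{M}$-edges into collider-free $M$-walks for one direction, and contract maximal $M$-segments of a connecting walk in $\G$ for the other, using endpoint-identity to preserve marks and Proposition \ref{prop:preserveAn} to transfer ancestry of colliders. The $(\Leftarrow)$ direction is correct as written. For $(\Rightarrow)$, however, you treat the elimination of $M$-interior colliders as an open ``technical heart'' requiring a new termination argument, when in fact the proposition you already cite at the outset, Proposition \ref{prop:muConnAllCollInC}, disposes of it entirely: applying it yields a $\mu$-connecting \emph{walk} whose colliders all lie in $C \subseteq O$, so every maximal segment with interior in $M$ is automatically collider-free and contracts directly to an endpoint-identical edge of $\mathcal{M}$. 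This also means the reduction to a route via Proposition \ref{prop:muConnPath} is unnecessary here (and slightly at odds with the colliders-in-$C$ normalization, since reducing to a route afterwards only guarantees colliders in $An(C)$), and your intermediate claim that a segment can contain at most one $M$-interior collider is false in general, though it becomes immaterial once Proposition \ref{prop:muConnAllCollInC} is actually applied.
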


\subsection{A marginalization algorithm}

We describe an algorithm to compute the latent projection of a graph on some 
subset of nodes. For this purpose, we define a {\it triroute}, $\theta$, to be 
a walk of length 2, 
	$\langle \alpha, e_1, \gamma, e_2, \beta\rangle$, such that $\gamma 
	\neq \alpha, 
	\beta$. We suppress $e_1$ and 
	$e_2$ from the notation and use $_{\theta}(\alpha,\gamma, \beta)$ 
	to 
	denote the 
	triroute. We say that a triroute is {\it colliding} if 
	$\gamma$ is a 
	collider on $\theta$, and otherwise we say that it is {\it 
		noncolliding}. This is analogous to the concept of a {\it tripath} 
		(see e.g. \cite{lauritzen2017}), but allows for $\alpha = \beta$.

 Define 
$\Omega_M(\G)$ to be the set 
of noncolliding 
triroutes $_\theta(\alpha, m, \beta)$ such that $m\in M$ and such 
that an 
endpoint-identical edge $\alpha \sim \beta$ is not present in 
$\mathcal{G}$.

\begin{algorithm}
	\SetKwInOut{Input}{input}
	\SetKwInOut{Output}{output}
	\Input{a DMG, $\mathcal{G} = (V,E)$ \newline a subset $M\subseteq V$ over 
	which to marginalize}
	\Output{a graph $\mathcal{M} = (O, \bar{E}),\ O = V \setminus M$}
	Initialize $E_0 = E$, $\mathcal{M}_0 = (V,E_0)$, $k = 0$\;
	\While{$\Omega_M(\mathcal{M}_k) \neq \emptyset$}{
		Choose $\theta =\ _{\theta}(\alpha, m, \beta) \in 
		\Omega_M(\mathcal{M}_k)$\;
		Set $e_{k+1}$ to be the edge $\alpha \sim \beta$ which is 
		endpoint-identical to $\theta$\;
		Set $E_{k+1} = E_k \cup \{e_{k+1}\}$\;
		Set $\mathcal{M}_{k+1} = (V,E_{k+1})$\;
		Update $k = k + 1$	
	}
	\Return  $(\mathcal{M}_k)_{O}$
	\newline
	\label{algo:marg}
	\caption{Computing the latent projection of a DMG.}
\end{algorithm}

\begin{prop}
	Algorithm \ref{algo:marg} outputs the latent projection of a DMG.
	\label{prop:margAlgo}
\end{prop}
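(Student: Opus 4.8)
The plan is to show that Algorithm~\ref{algo:marg} terminates and that, upon termination, the graph $\mathcal{M}_k$ has exactly the edges of the latent projection $m(\G, O)$ restricted to the full vertex set $V$ (with the auxiliary edges between nodes involving $M$ still present), so that the final induced subgraph $(\mathcal{M}_k)_O$ equals $m(\G, O)$. The two halves are termination and correctness, and correctness in turn splits into the soundness direction (every edge added by the algorithm is an edge of the latent projection) and the completeness direction (every edge of the latent projection is eventually present).

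For termination, I would argue that the algorithm only ever \emph{adds} edges, never removes them, and that all added edges lie in the fixed finite set $\{\alpha \rightarrow \beta, \alpha \leftarrow \beta, \alpha \leftrightarrow \beta : \alpha, \beta \in V\}$; moreover, when a triroute $_\theta(\alpha, m, \beta) \in \Omega_M(\mathcal{M}_k)$ is chosen, the edge $e_{k+1}$ endpoint-identical to $\theta$ is by definition of $\Omega_M$ \emph{not} already present, so $E_{k+1} \supsetneq E_k$ strictly. Hence $|E_k|$ strictly increases and is bounded, so the loop halts.

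For soundness, I would use induction on $k$ to show that every edge in $E_k$ is either an original edge of $\G$ or is endpoint-identical to some nontrivial walk in $\G$ with no colliders and all interior nodes in $M$ --- i.e., is an edge one would put into $m(\G, V)$ reading $M$ as latent relative to $V$; actually the cleaner invariant is that every edge in $E_k$ between nodes of $V$ is justified by such a walk in $\G$. The inductive step: if $e_{k+1}$ is endpoint-identical to a noncolliding triroute $_\theta(\alpha, m, \beta)$ in $\mathcal{M}_k$, then each of its two edges is (by the induction hypothesis) endpoint-identical to a collider-free walk in $\G$ with interior in $M$; composing these two walks at $m \in M$ yields a walk in $\G$ whose only possible collider is at $m$, but $\theta$ is noncolliding at $m$ and endpoint-identity preserves the relevant marks, so after matching orientations the composed walk is collider-free with interior nodes in $M \cup \{m\} = M$, and it is endpoint-identical to $e_{k+1}$. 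This is the key step and the place where care is needed: one must track loop orientations (as stressed in the walk definitions and Figure~\ref{fig:walkWithLoop}) so that ``noncolliding triroute'' genuinely lets the two sub-walks splice without creating a collider at $m$, and one must check the endpoint marks propagate correctly through composition. For completeness, I would argue the contrapositive of a fixed-point statement: when the loop exits, $\Omega_M(\mathcal{M}_k) = \emptyset$, meaning every noncolliding triroute through an $M$-node already has its endpoint-identical edge present. Then, given any collider-free walk $\omega$ in $\G$ with interior in $M$ witnessing an edge $\alpha \sim \beta$ of $m(\G,O)$, I would induct on the length of $\omega$: reduce a minimal interior $M$-node to a triroute, use the fixed-point property to replace it by a single edge present in $\mathcal{M}_k$, obtaining a shorter collider-free walk (in $\mathcal{M}_k$, hence with edges justified back in $\G$), and iterate until only $\alpha \sim \beta$ remains. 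A subtlety here is that shortening must not introduce a collider; since we only collapse noncolliding triroutes this is fine, but one should also handle the base cases where $\omega$ has length one (the edge is already there, possibly an original edge) and confirm the original edges of $\G$ between $O$-nodes survive into $(\mathcal{M}_k)_O$. Finally, restricting $\mathcal{M}_k$ to $O$ discards exactly the edges incident to $M$, leaving precisely the edges of $m(\G, O)$, which is what Proposition~\ref{prop:margAlgo} asserts.

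The main obstacle I anticipate is the orientation bookkeeping in the soundness composition step: ensuring that ``endpoint-identical to a noncolliding triroute'' translates, after composing the two witnessing walks and choosing loop orientations consistently, into ``endpoint-identical to a collider-free walk in $\G$'' without accidentally creating a collider at the splice point $m$ or mismatching a mark at $\alpha$ or $\beta$ when $\alpha = \beta$. Everything else is a fairly routine induction on walk length together with the strictly-increasing-edge-count termination argument.
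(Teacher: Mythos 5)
Your proposal is correct and matches the paper's proof in essence: both directions rest on the same correspondence between noncolliding triroutes through $M$-nodes and collider-free walks in $\G$ with interior in $M$, the only organizational difference being that the paper runs the completeness induction forward along the witnessing walk (building up endpoint-identical prefix edges $e_1,\ldots,e_{n+1}$) where you collapse interior nodes at the terminal fixed point, and the paper unfolds output edges backward into triroutes where you maintain a forward invariant on the step index $k$. Your explicit termination argument (strictly increasing edge count bounded by the finite set of possible edges) and the orientation bookkeeping at the splice node that you flag are both sound and fill in details the paper leaves implicit.
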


\section{Properties of DMGs}
\label{sec:propDMGs}

\begin{defn}[Markov equivalence]
	Let $\G_1=(V,E_1)$ and $\G_2=(V,E_2)$ be DMGs. We say 
	that $\G_1$ and $\G_2$ are {\it Markov equivalent} if 
	$\mathcal{I}(\mathcal{G}_1) = \mathcal{I}(\mathcal{G}_2)$. This defines an 
	equivalence relation and we let 
	$[\mathcal{G}_1]$ denote the (Markov) equivalence class of $\mathcal{G}_1$.
\end{defn}

\begin{exmp}[Markov equivalence in DGs]
	Let $\mathcal{D} = (V,E)$ be a DG. There is a directed edge from $\alpha$ 
	to 
	$\beta$ if and only if $\beta$ cannot be separated from $\alpha$ by any set 
	$C\subseteq V \setminus \{\alpha\}$ (Proposition \ref{prop:musepDG}). This 
	implies that two DGs are Markov 
	equivalent if and only if they are equal. Thus, in the restricted class of 
	DGs, every Markov equivalence class is a 
	singleton and in this sense {\it identifiable} from its induced 
	independence model. However, when 
	considering Markov equivalence in the more general class of DMGs not every 
	equivalence class of a DG is a singleton as the DG might be Markov 
	equivalent to a DMG. As an example of this, consider the complete DG 
	on a node set $V$ which is Markov 
	equivalent to the complete DMG on $V$.
\end{exmp}

\begin{defn}[Maximality of a DMG]
	We say that $\G$ is {\it maximal} if it is complete, or 
	if
	any added edge changes the induced independence model 
	$\I(\G)$.
	\label{def:maximalDMG}
\end{defn}

\subsection{Inducing paths}

Separability of nodes can be 
stu\-died using the concept of an {\it inducing path} which has also been 
used in
other classes of graphs 
\cite{ richardson2002, vermaEquiAndSynthesis}. In the context of DMGs and 
$\mu$-separation, it is 
natural to define several types of inducing paths due to the asymmetry of 
$\mu$-separation and the possibility of directed cycles in DMGs.

\begin{defn}[Inducing path]
	An {\it inducing path from $\alpha$ to $\beta$} is a 
	nontrivial 
	path or cycle, $\pi = \langle \alpha,\ldots,\beta\rangle$, which has a 
	head at $\beta$ and 
	such 
	that 
	there are no noncolliders on $\pi$ and every node is an 
	ancestor of $\alpha$ or $\beta$.
	The inducing path $\pi$ is {\it bidirected} if every edge on $\pi$ is 
	bidirected. If $\pi$ is not bidirected, it has one of the forms $\alpha 
	\rightarrow \beta$ or
	
	$$
	\alpha \rightarrow \gamma_1 \leftrightarrow \ldots \leftrightarrow \gamma_n 
	\leftrightarrow \beta.
	$$
	
	\noindent and we say that it is {\it unidirected}. If, furthermore,
	$\gamma_i 
	\in 
	An(\beta)$ 
	for all $i=1,\ldots,n$ (or it is on the form $\alpha \rightarrow \beta$) 
	then we say that it is {\it directed}.
\end{defn}

Note that an inducing path is by 
definition 
either a path or a cycle. An inducing path is either bidirected or unidirected. 
Some unidirected indu\-cing paths are also directed. Propositions 
\ref{prop:biIP} 
and \ref{prop:asymIP} show how bidirected and directed inducing paths in a 
certain sense correspond to bidirected and directed edges, respectively.

		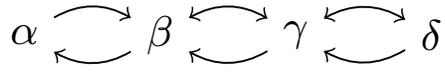
\begin{figure}
			\resizebox{0.5\textwidth}{!}{
				\begin{tikzpicture}[every node/.style={scale = 1}] 
				\node (a) {$\alpha$};
				\node (b) [right of=a] {$\beta$};
				\node (c) [right of=b] {$\gamma$};
				\node (d) [right of=c] {$\delta$};
				\path
				(b) [->]edge [bend left] node {} (a)
				(a) [->]edge [bend left] node {} (b)
				(b) [<->]edge [bend left] node {} (c)
				(c) [->]edge [bend left] node {} (b)
				(c) [<->]edge [bend left] node {} (d)
				(d) [->]edge [bend left] node {} (c);
				\end{tikzpicture}}
			\caption{Examples of inducing paths in a DMG: the path $\beta 
			\rightarrow \alpha$ is a unidirected inducing path from $\beta$ to 
			$\alpha$, and also a directed inducing path. The path $\beta 
			\leftrightarrow \gamma$ is a bidirected inducing path. The path 
			$\beta \leftrightarrow \gamma 
			\leftrightarrow \delta$ is a bidirected inducing path from $\beta$ 
			to $\delta$ (and by definition its inverse is a bidirected inducing 
			path from 
			$\delta$ to $\beta$). The 
			path $\delta \rightarrow \gamma \leftrightarrow \beta$ is both a 
			unidirected and a directed inducing path from $\delta$ to $\beta$, 
			whereas the path $\alpha 
			\rightarrow \beta \leftrightarrow \gamma$ is a unidirected inducing 
			path from $\alpha$ to $\gamma$, 
			but not a directed inducing path.}
			\label{fig:ipDMG}
		\end{figure}

\begin{prop}
	Let $\nu$ be an inducing path from $\alpha$ to $\beta$. The following holds 
	for any $C 
	\subseteq V \setminus \{\alpha\}$. If $\alpha \neq 
	\beta$, then there exists
	a $\mu$-connecting path from $\alpha$ to $\beta$ given $C$. If $\alpha = 
	\beta$ then there exists a $\mu$-connecting cycle 
	from $\alpha$ to $\beta$ 
	given $C$. We call such a path or cycle a {\it 
	$\nu$-induced open path or cycle}, respectively, or simply a
      $\nu$-induced open walk to cover both the case 
	$\alpha=\beta$ and the case $\alpha\neq\beta$. If the 
	inducing path is bidirected or directed, then the 
	$\nu$-induced open walk is endpoint-identical to the inducing path.
	\label{prop:IPepiConnPath}
\end{prop}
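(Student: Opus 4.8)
The plan is to traverse the inducing path $\nu = \langle \alpha, \ldots, \beta\rangle$ and, at each noncollider, splice in a short detour that converts the noncollider into a collider sitting in $\mathrm{An}(C)$, or alternatively route past it entirely, so that the resulting walk is $\mu$-connecting given $C$ while keeping the head at $\beta$ intact. Since $\nu$ has no noncolliders by definition of an inducing path, the only thing one really needs to handle is the collider condition: every collider on $\nu$ must lie in $\mathrm{An}(C)$. This is where the hypothesis ``every node is an ancestor of $\alpha$ or $\beta$'' does the work, together with the assumption $\alpha \notin C$ (which is part of what $C \subseteq V\setminus\{\alpha\}$ buys us).

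First I would set up the case distinction on the form of $\nu$. If $\nu = \alpha \rightarrow \beta$, the statement is immediate: this single edge is already a $\mu$-connecting path (or cycle, when $\alpha = \beta$) given any $C$ with $\alpha \notin C$, and it is trivially endpoint-identical to itself. So assume $\nu$ has internal nodes $\gamma_1, \ldots, \gamma_n$. Every internal node is a collider on $\nu$ (there are no noncolliders), so I need each $\gamma_i \in \mathrm{An}(C)$. By the inducing path hypothesis, $\gamma_i \in \mathrm{An}(\alpha) \cup \mathrm{An}(\beta)$. If $\gamma_i \in \mathrm{An}(\beta)$, I would argue $\beta$ itself can be pulled into $C$'s ancestor set in the relevant sense — but more carefully, the cleanest route is: for each internal collider $\gamma_i$ choose a directed path from $\gamma_i$ to $\alpha$ or to $\beta$, and then observe that the endpoints $\alpha, \beta$ need to be made ancestors of $C$. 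Here I would invoke the standard trick used for inducing paths: if $\gamma_i \in \mathrm{An}(\beta)$ then either $\beta \in \mathrm{An}(C)$ already (if $\beta \in C$ this is automatic, but $\beta$ need not be in $C$) — so instead I would handle the bidirected and directed cases separately, since the proposition's last sentence treats those, and handle the general unidirected case by the detour construction.

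The core construction: for a general inducing path, walk along $\nu$; whenever an internal node $\gamma_i$ is a collider not already forced into $\mathrm{An}(C)$, use the fact that $\gamma_i \in \mathrm{An}(\alpha)$ or $\gamma_i \in \mathrm{An}(\beta)$. The subtle point is that $\mathrm{An}(\alpha) \subseteq \mathrm{An}(C)$ fails in general and likewise for $\beta$. The resolution standard in this literature is that one does not need $\gamma_i \in \mathrm{An}(C)$ literally with the given $C$ — one builds a new walk. Concretely, I would proceed by induction on the number of internal nodes of $\nu$: pick the internal node $\gamma_i$ closest to an endpoint, say $\gamma_i \in \mathrm{An}(\beta)$ via a directed path $d$; compose a shortened inducing path on the remaining nodes with an appropriate detour through $d$ and back, checking that colliders so created are ancestors of $\beta$ hence (after the detour reaches $\beta$, and $\beta$ reaches... ) — honestly the cleanest formulation is to first prove it when $C = \emptyset$ is replaced by $C$ arbitrary and reduce to showing every internal node is in $\mathrm{An}(\alpha \cup \beta)$ suffices because the $\mu$-connecting walk may itself pass through $\beta$ (as $\beta$ may occur twice on a route), allowing collider nodes to be ancestors of $\beta$ which is visited. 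For the bidirected/directed special cases, no detour is needed: a bidirected inducing path $\alpha \leftrightarrow \gamma_1 \leftrightarrow \cdots \leftrightarrow \beta$ has all internal $\gamma_i \in \mathrm{An}(\alpha)\cup\mathrm{An}(\beta)$, and since each $\gamma_i$ is a collider, I would insert, just after $\gamma_i$, a loop-like excursion $\gamma_i \to \cdots \to (\alpha\text{ or }\beta) \leftrightarrow \gamma_i$ — but this needs such an edge to exist, which it does only in special graphs, so instead the bidirected case is proven by noting the induced open walk is literally $\nu$ itself plus excursions, and the endpoint-identity claim follows because the first and last edges of $\nu$ are retained.

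The main obstacle I anticipate is precisely making the detour bookkeeping rigorous: ensuring that each excursion inserted to witness $\gamma_i \in \mathrm{An}(C)$ does not itself create a fresh noncollider (it will create colliders at the junction points, which must in turn be ancestors of $C$), that the excursions for different $\gamma_i$ do not interfere, and that the final object is a genuine walk with a head at $\beta$ and $\alpha \notin C$ as a noncollider-free endpoint — i.e. that $\alpha$ does not accidentally become an interior noncollider. I expect the induction should be organized on the length of $\nu$, peeling off the endpoint edge at $\beta$ (which is a head into $\beta$, preserving that requirement) and applying the inductive hypothesis to a strictly shorter inducing path; Proposition~\ref{prop:muConnPath} can then be invoked at the end to extract a route (path or cycle) from the walk so constructed, giving the $\nu$-induced open path or cycle as claimed. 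The endpoint-identity statement in the bidirected and directed cases will follow by checking that the construction never modifies the first or last edge of $\nu$.
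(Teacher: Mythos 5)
You have correctly located the difficulty: every internal node of $\nu$ is a collider, and the inducing-path hypothesis only places it in $An(\alpha)\cup An(\beta)$, not in $An(C)$. But the repair you propose --- splicing in an excursion from a problematic collider $\gamma_i$ down a directed path to $\alpha$ or $\beta$ \emph{and back} --- does not work, and your own hedging (``honestly the cleanest formulation is\dots'', ``the main obstacle I anticipate\dots'') signals that no working construction is actually produced. The reason the detour-and-return fails is that the far end of the excursion ($\alpha$ or $\beta$) becomes a collider on the new walk, and nothing forces $\alpha$ or $\beta$ to lie in $An(C)$; this is exactly the point where your situation differs from the proof of Proposition \ref{prop:muConnAllCollInC}, where the detour terminates at a node of $C$ itself. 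Your alternative remark that the walk ``may itself pass through $\beta$, allowing collider nodes to be ancestors of $\beta$ which is visited'' conflates two unrelated things: visiting $\beta$ on the walk does not relax the requirement that colliders lie in $An(C)$.

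The missing idea is to \emph{not come back}. The paper takes $k$ maximal in $\{1,\dots,n\}$ such that there is an open walk $\omega$ from $\alpha$ to $\gamma_k$ given $C$ avoiding $\beta$ and containing $\alpha$ once (well-defined since the first edge of $\nu$ reaches $\gamma_1$ with a head). If $\gamma_k\in An(C)$, append $\gamma_k\leftrightarrow\gamma_{k+1}$; maximality forces $k=n$ and the walk ends at $\beta$ with a head. If $\gamma_k\notin An(C)$, take the directed path from $\gamma_k$ to the first occurrence of $\alpha$ or $\beta$ on it: this path meets no node of $C$ (any such node would put $\gamma_k$ in $An(C)$). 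If $\beta$ occurs first, compose $\omega$ with that path and terminate at $\beta$ --- $\gamma_k$ becomes a noncollider not in $C$ and the arrival at $\beta$ has a head. If $\alpha$ occurs first, discard $\omega$ entirely and restart from $\alpha$ along the reversed path (this is where $\alpha\notin C$ is used), append $\gamma_k\leftrightarrow\gamma_{k+1}$, and conclude $k=n$ by maximality. Proposition \ref{prop:muConnPath} then extracts the route. Finally, your claim that endpoint-identity ``follows because the construction never modifies the first or last edge'' is false as stated: the restart step replaces the first edge at $\alpha$, so the bidirected/directed cases require rerunning the maximality argument restricted to walks carrying the correct mark at $\alpha$, as the paper does.
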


The following corollary is a direct consequence of Proposition
\ref{prop:IPepiConnPath}, show\-ing that $\beta$ is inseparable from
$\alpha$ if there is an inducing path from $\alpha$ to $\beta$ irrespectively
of whether the nodes are adjacent. 

\begin{cor}
	Let $\alpha,\beta\in V$. If there 
	exists an inducing path from $\alpha$ to $\beta$ in $\G$, then 
	$\beta$ is not $\mu$-separated from $\alpha$ given $C$ for any $C\subseteq 
	V\setminus \{\alpha\}$, that is, $\alpha \in u(\beta,
        \mathcal{I}(\mathcal{G}))$.  
	\label{cor:ipNonSep}
\end{cor}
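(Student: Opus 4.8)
The plan is to deduce the corollary directly from Proposition~\ref{prop:IPepiConnPath}, which has already done the real work. Suppose there is an inducing path $\nu$ from $\alpha$ to $\beta$ in $\G$, and let $C \subseteq V \setminus \{\alpha\}$ be arbitrary. The definition of $\mu$-separation (Definition~\ref{def:muSep}) requires that there be \emph{no} $\mu$-connecting walk from $\alpha$ to $\beta$ given $C$ in order for $\beta$ to be $\mu$-separated from $\alpha$ given $C$. So it suffices to exhibit one such walk. First I would apply Proposition~\ref{prop:IPepiConnPath} to $\nu$ and the given set $C$ (note that the hypothesis $C \subseteq V \setminus \{\alpha\}$ is exactly what that proposition demands): this yields a $\nu$-induced open walk from $\alpha$ to $\beta$ given $C$ --- a $\mu$-connecting path if $\alpha \neq \beta$ and a $\mu$-connecting cycle if $\alpha = \beta$. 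In either case this is a $\mu$-connecting walk from $\alpha$ to $\beta$ given $C$, so $\beta$ is not $\mu$-separated from $\alpha$ given $C$.

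Since $C \subseteq V \setminus \{\alpha\}$ was arbitrary, this shows $\langle \alpha, \beta \mid C\rangle \notin \mathcal{I}(\mathcal{G})$ for every such $C$; that is, $\beta$ is inseparable from $\alpha$ in the sense of Definition~\ref{def:sep}, so $\alpha \notin s(\beta, \mathcal{I}(\mathcal{G}))$ and hence $\alpha \in u(\beta, \mathcal{I}(\mathcal{G})) = V \setminus s(\beta, \mathcal{I}(\mathcal{G}))$, which is the claimed conclusion.

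There is essentially no obstacle here: the corollary is a bookkeeping consequence of the preceding proposition, and the only points requiring any care are (i) checking that the quantifier structure matches --- $\mu$-separation fails as soon as one $\mu$-connecting walk exists, and the proposition delivers one for \emph{every} admissible $C$ --- and (ii) correctly tracing the definitions of separability and of $u(\cdot, \cdot)$ to translate ``inseparable'' into the statement $\alpha \in u(\beta, \mathcal{I}(\mathcal{G}))$. The cycle-versus-path dichotomy in Proposition~\ref{prop:IPepiConnPath} is harmless because Definition~\ref{def:muConnSeq} and Definition~\ref{def:muSep} are phrased in terms of walks, and both a path and a cycle are walks.
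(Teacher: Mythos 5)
Your proposal is correct and matches the paper exactly: the paper states the corollary as a direct consequence of Proposition~\ref{prop:IPepiConnPath}, and your argument spells out precisely that deduction (a $\nu$-induced open walk exists for every $C \subseteq V\setminus\{\alpha\}$, so no such $C$ separates $\beta$ from $\alpha$, hence $\alpha \in u(\beta, \mathcal{I}(\mathcal{G}))$). No gaps.
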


The following two propositions  show that for two of the three types
of inducing paths there is a Markov equivalent supergraph in which the
nodes are adjacent. This illustrates how one can easily find
Markov equivalent DMGs that do not have the same adjacencies.
Example \ref{exmp:nonMax} shows that for a unidirected inducing path
it may not be possible to add an edge without changing the
independence model.

\begin{prop}
	If there exists a bidirected inducing path from $\alpha$ to $\beta$ in 
	$\G$, then adding $\alpha \leftrightarrow \beta$ in $\mathcal{G}$ does not 
	change the independence model.
	\label{prop:biIP}
\end{prop}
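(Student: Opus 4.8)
The plan is to show that adding a bidirected edge $\alpha \leftrightarrow \beta$ to $\G$ creates no new $\mu$-connecting walks. Let $\pi$ be the given bidirected inducing path from $\alpha$ to $\beta$, and let $\G^+$ be the graph obtained from $\G$ by adding the edge $e^* = (\alpha \leftrightarrow \beta)$. Since $\G \subseteq \G^+$, every $\mu$-connecting walk in $\G$ is also one in $\G^+$, so $\I(\G^+) \subseteq \I(\G)$. For the reverse inclusion it suffices to show: whenever there is a $\mu$-connecting walk $\omega$ from some $a \in A$ to some $b \in B$ given $C$ in $\G^+$, there is also one in $\G$. By Proposition \ref{prop:muConnPath} we may assume $\omega$ is a $\mu$-connecting route, hence traverses $e^*$ only boundedly often; in fact, since $e^*$ is between $\alpha$ and $\beta$ and a route repeats no internal node, $\omega$ uses $e^*$ at most a controlled number of times.

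The key step is a surgery argument: replace each traversal of $e^*$ in $\omega$ by a detour through the inducing path $\pi$ (or a $\pi$-induced open walk furnished by Proposition \ref{prop:IPepiConnPath}), and argue that the resulting walk in $\G$ is still $\mu$-connecting given $C$. First I would handle the generic case where $e^*$ occurs as a non-endpoint step, say $\omega$ contains a subwalk $\gamma_{i-1} \overset{e_{i-1}}{\sim} \alpha \overset{e^*}{\leftrightarrow} \beta \overset{e_{i+1}}{\sim} \gamma_{i+1}$ (or with $\alpha,\beta$ interchanged). Because $e^*$ is bidirected, both $\alpha$ and $\beta$ are colliders-or-not on $\omega$ exactly according to the marks of the neighbouring edges, and in a $\mu$-connecting walk any collider among them lies in $An(C)$ while any noncollider does not lie in $C$. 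I then splice in $\pi$ between $\alpha$ and $\beta$: since $\pi$ has heads at both ends (it is bidirected), the marks at $\alpha$ and $\beta$ seen by the rest of $\omega$ are unchanged, so $\alpha$ and $\beta$ retain their collider/noncollider status and their membership conditions relative to $C$ are untouched. The new internal nodes of $\pi$ are all colliders on $\pi$ (no noncolliders, by the inducing-path definition), and each is an ancestor of $\alpha$ or of $\beta$; I must check each such node is in $An_{\G}(C)$. This is where the endpoint conditions on $\omega$ enter: because $\alpha$ (resp. $\beta$) is a collider on the spliced walk whenever the adjacent edge $e_{i-1}$ (resp. $e_{i+1}$) has a head there, and a $\mu$-connecting walk forces colliders into $An(C)$, one gets $\alpha \in An(C)$ or $\beta \in An(C)$ in the relevant configurations, whence every internal node of $\pi$, being an ancestor of $\alpha$ or $\beta$, is in $An(C)$ as required. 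The remaining configurations — where neither adjacent edge has a head at the splice point — need a separate short argument, and the endpoint case where $\omega$ begins or ends with $e^*$ (so that $\alpha = a$ or $\beta = b$) must also be treated: here one uses that $\pi$ (being an inducing path) has the right terminal mark, so a $\pi$-induced open walk given $C$ exists by Proposition \ref{prop:IPepiConnPath}, endpoint-identical to $\pi$ since $\pi$ is bidirected, and one prepends/appends it.

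After performing all such replacements one obtains a walk in $\G$ from $a$ to $b$ given $C$ whose collider set has been enlarged only by nodes already in $An(C)$ and whose noncollider set has not been enlarged by any node in $C$; hence it is $\mu$-connecting in $\G$. Therefore no $\mu$-separation statement is destroyed by adding $e^*$, giving $\I(\G) \subseteq \I(\G^+)$ and thus $\I(\G) = \I(\G^+)$. The main obstacle I anticipate is the bookkeeping of collider/noncollider status at the splice points $\alpha$ and $\beta$ across all four combinations of the adjacent edge marks, together with verifying that the ancestry conditions interact correctly — in particular making sure that when we need ``$\alpha \in An(C)$ or $\beta \in An(C)$'' it is actually delivered by the $\mu$-connecting hypothesis on $\omega$ rather than being an unwarranted assumption; Proposition \ref{prop:preserveAn} (ancestry is a graph notion, unaffected here since we only add a bidirected edge) will be used tacitly to keep ancestry computations in $\G$.
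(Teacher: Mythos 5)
Your overall strategy is the same as the paper's: reduce to showing that every $\mu$-connecting walk in $\G^+$ yields one in $\G$, and do surgery on each occurrence of the new edge $e^*=\alpha\leftrightarrow\beta$ by splicing in an endpoint-identical connecting walk obtained from the bidirected inducing path. However, the primary line of your case analysis does not close. When you splice in $\pi$ itself, its internal nodes are all colliders and are only guaranteed to lie in $An(\alpha)\cup An(\beta)$, so you need $\alpha\in An(C)$ or $\beta\in An(C)$ (in fact you need every internal node of $\pi$ in $An(C)$, so really both, depending on which endpoint each internal node is an ancestor of). In the configuration $\cdots\leftarrow\alpha\overset{e^*}{\leftrightarrow}\beta\rightarrow\cdots$ both splice points are noncolliders on $\omega$, so the $\mu$-connecting hypothesis only gives $\alpha,\beta\notin C$ and delivers no ancestry of $C$ at all; the direct splice of $\pi$ is then simply not open, and the ``separate short argument'' you defer is not a small patch --- it is the entire content of Proposition \ref{prop:IPepiConnPath}. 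Even in the mixed configurations, knowing only that $\alpha\in An(C)$ does not place an internal node of $\pi$ that is an ancestor of $\beta$ but not of $\alpha$ into $An(C)$.

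The uniform fix, which is exactly the paper's one-line proof, is to never splice in $\pi$ itself but always the $\nu$-induced open walk of Proposition \ref{prop:IPepiConnPath}, applied with conditioning set $C\setminus\{\alpha\}$ so that the proposition's hypothesis $C\subseteq V\setminus\{\alpha\}$ is met (the resulting path or cycle is still open given $C$ because $\alpha$ does not recur as an internal noncollider on a path, and its colliders lie in $An(C\setminus\{\alpha\})\subseteq An(C)$). Since that walk is endpoint-identical to the bidirected edge, the collider status of $\alpha$ and $\beta$ relative to the rest of $\omega$ is untouched and no case analysis on the adjacent marks is needed. Two further small points you should make explicit: when $e^*$ is traversed from $\beta$ to $\alpha$ you need the inverse of $\pi$, which is again a bidirected inducing path, so Proposition \ref{prop:IPepiConnPath} applies in that direction too; and the reduction to routes via Proposition \ref{prop:muConnPath} is harmless but unnecessary for this proposition.
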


\begin{prop}
	If there exists a directed inducing path from 
	$\alpha$ to $\beta$ in $\G$, then adding $\alpha \rightarrow 
	\beta$ in 
	$\G$ does not change the independence model.
	\label{prop:asymIP}
\end{prop}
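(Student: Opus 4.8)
The plan is to show that the addition of a directed edge $\alpha \rightarrow \beta$ to $\mathcal{G}$, when there is already a directed inducing path $\pi$ from $\alpha$ to $\beta$, changes no $\mu$-separation statement. Let $\mathcal{G}^+$ denote $\mathcal{G}$ with this edge added. Since adding an edge can only create $\mu$-connecting walks and never destroy them, one direction is immediate: $\musepG{A}{B}{C}{\mathcal{G}^+}$ implies $\musepG{A}{B}{C}{\mathcal{G}}$. The work is in the converse: I must show that any $\mu$-connecting walk in $\mathcal{G}^+$ from some $a \in A$ to some $b \in B$ given $C$ can be replaced by a $\mu$-connecting walk in $\mathcal{G}$ from $A$ to $B$ given $C$. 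It suffices to handle a $\mu$-connecting \emph{route} (Proposition \ref{prop:muConnPath}), and by Proposition \ref{prop:muConnAllCollInC} I may additionally assume every collider on it lies in $C$.

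The key step is a surgery that removes each traversal of the new edge $e: \alpha \rightarrow \beta$ and splices in a copy of the inducing path $\pi$. First I would note that $e$ has a tail at $\alpha$ and a head at $\beta$, and recall that $\pi$, being a directed inducing path, is endpoint-identical to $\alpha \rightarrow \beta$ by the last clause of Proposition \ref{prop:IPepiConnPath}: there is a $\pi$-induced open walk from $\alpha$ to $\beta$ given $C' = C \setminus \{\alpha\}$ that is endpoint-identical to $e$ (a tail at $\alpha$, a head at $\beta$), all of whose colliders lie in $An(C')$ and hence in $An(C)$, and all of whose noncolliders avoid $C$. Now traverse the given route; at each place it uses $e$, say in the orientation $\cdots \overset{f}{\sim} \alpha \overset{e}{\rightarrow} \beta \overset{g}{\sim} \cdots$ (or the reversed orientation $\beta \leftarrow \alpha$ read from $\beta$ to $\alpha$), I replace the single edge $e$ by the $\pi$-induced open walk (respectively its inverse). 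Because the spliced-in walk has the same marks at $\alpha$ and $\beta$ as $e$, the collider/noncollider status of $\alpha$ and $\beta$ \emph{as junctions with the neighbouring edges $f,g$} is unchanged; internal nodes of the spliced walk that are colliders lie in $An(C)$ and internal noncolliders avoid $C$; and the endpoints $\alpha,\beta$ of $\pi$ are ancestors of $\alpha$ or $\beta$, which are themselves ancestors of $C$ whenever they happen to be colliders of the new walk (a collider at $\alpha$ forces $\alpha \in An(C)$ already in the original route since $\alpha$ had a head from $f$ and now still has a head from $f$ — wait, rather: if $\alpha$ is a collider it must have been one in the original route, hence $\alpha \in An(C)$). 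Doing this at every occurrence of $e$ yields a walk in $\mathcal{G}$ that is still $\mu$-connecting from $a$ to $b$ given $C$: the head at $b$ is preserved since we never touch the final edge, $a \notin C$ is unchanged, and all collider/noncollider conditions are verified locally as above.

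The main obstacle I anticipate is bookkeeping around nodes that appear on both the original route and the inducing path, and around the orientation of directed loops: after splicing, the result is a \emph{walk}, not a route, so I must be careful that "$\mu$-connecting" is checked as a property of the walk itself (every collider in $An(C)$, every noncollider outside $C$, head at $b$, $a \notin C$) rather than relying on the route structure — and that when $e$ is traversed in the $\beta \rightarrow \alpha$... (it cannot be, $e$ points from $\alpha$ to $\beta$) — rather, when the route traverses $e$ "backwards", i.e. enters at $\beta$ and exits at $\alpha$, I must splice in $\omega^{-1}$ where $\omega$ is the $\pi$-induced open walk, and check that $\omega^{-1}$ has the right marks (head at $\alpha$? no — a tail at $\alpha$; but traversing $e$ from $\beta$ to $\alpha$ means the edge presents a tail at $\alpha$ and a head at $\beta$, exactly matching $\omega^{-1}$ read from $\beta$). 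A secondary subtlety is the degenerate case $\alpha = \beta$ is impossible here since $e: \alpha \rightarrow \beta$ with $\alpha \neq \beta$ on a genuine inducing path of positive length, but if $\pi$ is the length-one path $\alpha \rightarrow \beta$ the proposition is vacuous. Once these cases are organized, each verification is a routine local check of the $\mu$-connecting conditions, and the proof concludes by invoking that $\musepG{A}{B}{C}{\mathcal{G}}$ therefore fails, contradicting the hypothesis, so $\musepG{A}{B}{C}{\mathcal{G}} \Leftrightarrow \musepG{A}{B}{C}{\mathcal{G}^+}$ and $\I(\mathcal{G}) = \I(\mathcal{G}^+)$.
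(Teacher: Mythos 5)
Your proposal is correct and follows essentially the same route as the paper's (much terser) proof: replace each occurrence of the new edge by the endpoint-identical $\nu$-induced open walk supplied by Proposition \ref{prop:IPepiConnPath} and check the $\mu$-connecting conditions locally, noting that the tail of $\alpha\rightarrow\beta$ at $\alpha$ forces $\alpha\notin C$ whenever the edge is used. The only cosmetic difference is that the paper handles the collider-ancestry issue by observing that the directed inducing path already makes $\alpha$ an ancestor of $\beta$, so adding the edge does not change ancestry, whereas you sidestep it by first pushing all colliders into $C$ via Proposition \ref{prop:muConnAllCollInC}; either works.
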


 We say 
 that nodes $\alpha$ and $\beta$ are {\it 
 	collider-connected} if there exists a nontrivial walk between
 $\alpha$ and $\beta$ such that every non-endpoint node is a 
 collider on the
 walk. We say that $\alpha$ is {\it directedly collider-connected} to
 $\beta$ if $\alpha$ and $\beta$ are collider-connected by a
 walk with a head at $\beta$.

\begin{defn}
	Let $\alpha,\beta \in V$. We define the set 
	$$
	D(\alpha,\beta) = \{\gamma\in An(\alpha,\beta)\mid \gamma 
	\text{ is directedly
	collider-connected to}\ \beta \} \setminus \{\alpha\}.
	$$
\end{defn}

Note that if $\alpha \not \rightarrow_\G \beta$, then $\mathrm{pa}(\beta)
\subseteq D(\alpha,\beta)$, and if the graph is furthermore a directed graph 
then $\mathrm{pa}(\beta)
= D(\alpha,\beta)$.

\begin{prop}
	If there is no inducing path from $\alpha$ to $\beta$ in $\G$, then 
	$\beta$ 
	is separated from $\alpha$ by $D(\alpha,\beta)$.
	\label{prop:noIPsep}
\end{prop}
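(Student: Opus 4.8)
The plan is to prove the contrapositive: supposing $\beta$ is \emph{not} $\mu$-separated from $\alpha$ by $C:=D(\alpha,\beta)$ (note $\alpha\notin C$), I will exhibit an inducing path from $\alpha$ to $\beta$, contradicting the hypothesis. There is then a $\mu$-connecting walk from $\alpha$ to $\beta$ given $C$, and I would fix one of minimal length, $\omega=\langle\alpha=\gamma_0,e_1,\gamma_1,\dots,e_n,\gamma_n=\beta\rangle$ with $n\ge 1$; by $\mu$-connectedness $e_n$ has a head at $\beta$. If $n=1$ then $\omega$ is a single edge with a head at $\beta$ --- a path when $\alpha\ne\beta$, a loop (hence a cycle) when $\alpha=\beta$ --- so it is already an inducing path, and we may assume $n\ge 2$. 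I would also record at the outset that every node of $\omega$ lies in $An(\alpha)\cup An(\beta)$: the endpoints do trivially; the colliders of $\omega$ lie in $An(C)\subseteq An(\alpha)\cup An(\beta)$ because $C\subseteq An(\alpha)\cup An(\beta)$; and for a noncollider $\gamma_i$ one incident edge of $\omega$ is directed with its tail at $\gamma_i$, so following such tail-edges along $\omega$ --- every further noncollider is entered through a head, which forces the next edge to point onward --- produces a directed walk ending at an endpoint or a collider of $\omega$, so $\gamma_i$ is an ancestor of one of those.

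The crux is to show $\omega$ has no noncolliders among $\gamma_1,\dots,\gamma_{n-1}$. Suppose it does, and let $\gamma_i$ be the last one. If $\gamma_i=\alpha$, then deleting the initial segment gives the strictly shorter walk $\langle\alpha,e_{i+1},\gamma_{i+1},\dots,e_n,\beta\rangle$, which is still $\mu$-connecting given $C$ --- its internal nodes $\gamma_{i+1},\dots,\gamma_{n-1}$ are colliders of $\omega$ (as $\gamma_i$ was the last noncollider), hence in $An(C)$, and it ends with a head at $\beta$ --- contradicting minimality. If $\gamma_i\ne\alpha$, then every internal node of the subwalk $\langle\gamma_i,e_{i+1},\dots,e_n,\beta\rangle$ is a collider on it and it ends with a head at $\beta$, so $\gamma_i$ is directedly collider-connected to $\beta$; together with $\gamma_i\in An(\alpha)\cup An(\beta)$ and $\gamma_i\ne\alpha$ this gives $\gamma_i\in D(\alpha,\beta)=C$, which is impossible because a noncollider on a walk that is $\mu$-connecting given $C$ cannot lie in $C$. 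Hence $\gamma_1,\dots,\gamma_{n-1}$ are all colliders.

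It then remains to see that $\omega$ is a path or a cycle. If some node occurs more than once, I would splice: removing the segment between two occurrences of an \emph{internal} node keeps that node a collider (both of its incident edges in the spliced walk have heads at it, since it is an internal collider of $\omega$), so the result is a strictly shorter $\mu$-connecting walk given $C$; if $\alpha$ occurs internally, deleting everything before that occurrence does the same; and if $\beta$ occurs internally --- necessarily as an internal collider, so the preceding edge has a head at $\beta$ --- deleting everything after that occurrence does the same. Each possibility contradicts minimality, so the only admissible coincidence is $\gamma_0=\gamma_n$. Thus $\omega$ is a path from $\alpha$ to $\beta$ (if $\alpha\ne\beta$) or a cycle (if $\alpha=\beta$); being nontrivial, ending with a head at $\beta$, having no noncolliders, and having all nodes in $An(\alpha)\cup An(\beta)$, it is an inducing path from $\alpha$ to $\beta$ --- the desired contradiction. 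Consequently no $\mu$-connecting walk from $\alpha$ to $\beta$ given $D(\alpha,\beta)$ exists, i.e.\ $\musepG{\alpha}{\beta}{D(\alpha,\beta)}{\mathcal{G}}$.

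I expect the main obstacle to lie in the shortcutting steps. Since $\mu$-connectedness --- unlike $m$-connectedness --- is sensitive to edge orientation and carries the asymmetric requirement of a head at $\beta$, and since DMGs may contain loops and parallel edges between a pair of nodes, each splicing case has to be checked to ensure the head at $\beta$ is preserved and that no node that is spliced over turns into a noncollider. Carrying out the noncollider reduction \emph{before} the path/cycle reduction is what keeps the surgery manageable: once every internal node of $\omega$ is known to be a collider, the head-marks required for every shortcut are available for free.
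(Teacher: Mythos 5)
Your proof is correct and rests on the same two observations as the paper's own argument: an internal noncollider whose suffix to $\beta$ consists entirely of colliders is directedly collider-connected to $\beta$ and an ancestor of $\{\alpha,\beta\}$, hence lies in $D(\alpha,\beta)$ and blocks the walk, while any node outside $An(\alpha,\beta)$ would force a collider outside $An(D(\alpha,\beta))$. You package this as a contrapositive — extracting a minimal open walk and showing it must be an inducing path — whereas the paper directly locates a blocking node on an arbitrary walk with a head at $\beta$; the content is essentially the same, and your explicit splicing down to a path or cycle even supplies the justification for the existence of the ``bad index'' that the paper's proof asserts without argument.
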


\begin{exmp}[Inducing paths]
Consider the DMG 
on nodes $\{\alpha,\gamma\}$ and with a single edge $\gamma 
\rightarrow \alpha$. In this case, there is no inducing path from $\alpha$ to 
$\alpha$ and $\alpha$ is $\mu$-separated from $\alpha$ by $D(\alpha, \alpha) = 
\{\gamma\}$. Now add the edge $\alpha \leftrightarrow \gamma$. In this new
DMG, there is an inducing path from $\alpha$ to $\alpha$ and therefore $\alpha$ 
is inseparable from itself.
\end{exmp}

\begin{exmp}[Non-adjacency of inseparable nodes in a maximal DMG]

		\begin{figure}
		\resizebox{0.5\textwidth}{!}{
			\begin{tikzpicture}[every node/.style={scale = 1}] 
			\node (a) {$\alpha$};
			\node (b) [right of=a] {$\beta$};
			\node (c) [right of=b] {$\gamma$};
			\node (d) [right of=c] {$\delta$};
			\path
			(a) [->]edge node {} (b)
			(b) [->]edge [bend left] node {} (c)
			(c) [->]edge [bend left] node {} (b)
			(c) [<->]edge [bend left] node {} (d)
			(c) [->]edge [loop above, min 
			distance=1mm,in=60,out=90,looseness=10] node {} (c)
			(d) [->]edge [loop above, min 
			distance=1mm,in=60,out=90,looseness=10] node {} (d)
			(c) [<->]edge [loop below, min 
			distance=1mm,in=180+60,out=180+90,looseness=10] node {} (c)
			(d) [<->]edge [loop below, min 
			distance=1mm,in=180+60,out=180+90,looseness=10] node {} (d)
			(d) [->]edge [bend left] node {} (c);
			\end{tikzpicture}}
		\caption{A maximal DMG in which $\delta$ is 
		inseparable from $\beta$, 
		though no edge is between the two. See Example \ref{exmp:nonMax}. We 
		will in general omit the bidirected loops from the visual presentations 
		of DMGs, see also the discussion in Subsection \ref{ssec:dmeg}.}
		\label{fig:maxDMG}
		\end{figure}
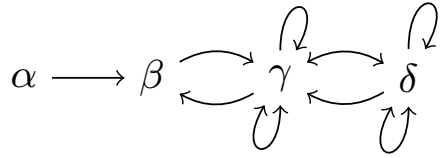

	Consider the DMG in Figure \ref{fig:maxDMG}. One can show that this DMG is 
	maximal (Definition \ref{def:maximalDMG}).
	There is an inducing path from $\beta$ to $\delta$ making $\delta$ 
	inseparable from $\beta$, yet no arrow can be 
	added 
	between $\beta$ and 
	$\delta$ without changing the independence model. This example illustrates 
	that maximal 
	DMGs 
	do not have the property that inseparable nodes are adjacent. This is 
	contrary to MAGs which form a subclass of ancestral graphs and have this
	exact property \cite{richardson2002}.
	\label{exmp:nonMax}
\end{exmp}

\section{Markov equivalence of DMGs}
\label{sec:MEofDMGs}

The main result of this section is that each Markov equivalence class of DMGs 
has a greatest element, that is, an element which is a supergraph of all other 
elements. This fact is helpful for understanding and graphically representing 
such equivalence classes, and potentially also for 
constructing learning algorithms. We will prove this result by arguing that the 
independence model of a DMG, $\G = (V,E)$, defines for each node 
$\alpha\in V$ a set of {\it potential parents} and a set of {\it potential 
siblings}. We then construct the greatest element of $[\G]$ by simply 
using these sets, and argue that this is in fact a Markov equivalent 
supergraph. As we only use the independence model to define the sets of 
potential parents and siblings, the supergraph is identical for all members of 
$[\G]$, and thus a greatest element. Within the equivalence class, the greatest 
element is also the only maximal element, and we will refer to it as 
the maximal element of the equivalence class.

\subsection{Potential siblings}

\begin{defn}
	Let $\I$ be an independence model over $V$ and let $\alpha, \beta \in V$. 
	We say that $\alpha$ and $\beta$ are {\it 
		potential siblings} in $\I$ if (s1)--(s3) hold:
	
	\begin{enumerate}[label=(s\arabic*)]
		\item $\beta \in u(\alpha, \mathcal{I})$ and 
		$\alpha \in u(\beta, \mathcal{I})$,
		\item for all $\gamma \in V$, $C \subseteq V$ such that $\beta \in C$,
		$$
		\ind{\gamma}{\alpha}{C} \in \I \Rightarrow \ind{\gamma}{\beta}{C}  \in 
		\I,
		$$
		\item for all $\gamma \in V$, $C \subseteq V$ such that $\alpha \in C$,
		$$
		\ind{\gamma}{\beta}{C} \in \I \Rightarrow \ind{\gamma}{\alpha}{C}  \in 
		\I.
		$$
	\end{enumerate}
	\label{def:potSib}
\end{defn}

Potential siblings are defined abstractly above in terms of the
independence model only. The following proposition gives a useful
characterization for graphical independence models by simply contraposing 
(s2) and (s3). 

\begin{prop}
	Let $\I(\G)$ be the independence model induced by $\G$. Then
	$\alpha, \beta \in V$ are potential siblings if and only if (gs1)--(gs3) 
	hold: 
	
	\begin{enumerate}[label=(gs\arabic*)]
		\item $\beta \in u(\alpha, \mathcal{I}(\G))$ and 
		$\alpha \in u(\beta, \mathcal{I}(\G))$,
		\item for all $\gamma \in V$, $C \subseteq V$ such that $\beta \in C$: 
		if there exists a $\mu$-connecting walk from $\gamma$ to $\beta$ given 
		$C$, then there exists a $\mu$-connecting walk from $\gamma$ to 
		$\alpha$ given $C$,
		\item for all $\gamma \in V$, $C \subseteq V$ such that $\alpha \in C$: 
		if there exists a $\mu$-connecting walk from $\gamma$ to $\alpha$ given 
		$C$, then there exists a $\mu$-connecting walk from $\gamma$ to 
		$\beta$ given $C$.
	\end{enumerate}
	\label{def:graphicalPotSib}
\end{prop}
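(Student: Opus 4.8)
The plan is to prove this as a direct translation exercise: the conditions (gs1)--(gs3) are simply the conditions (s1)--(s3) from Definition \ref{def:potSib} rewritten in the specific case $\I = \I(\G)$, using the definition of $\I(\G)$ via $\mu$-separation together with a contrapositive. First I would note that (s1) and (gs1) are literally the same statement, since $u(\alpha,\mathcal{I})$ is defined purely in terms of the independence model, so there is nothing to prove there. The content is therefore entirely in showing (s2) $\Leftrightarrow$ (gs2) and (s3) $\Leftrightarrow$ (gs3), and by the symmetry of the two statements (swapping the roles of $\alpha$ and $\beta$) it suffices to handle one of them, say (s2) $\Leftrightarrow$ (gs2).

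To do this, I would fix $\gamma \in V$ and $C \subseteq V$ with $\beta \in C$, and unfold both sides. By definition of $\I(\G)$, the statement $\ind{\gamma}{\alpha}{C} \in \I(\G)$ means $\musepG{\gamma}{\alpha}{C}{\G}$, which by Definition \ref{def:muSep} means there is \emph{no} $\mu$-connecting walk from $\gamma$ to $\alpha$ given $C$; similarly $\ind{\gamma}{\beta}{C} \in \I(\G)$ means there is no $\mu$-connecting walk from $\gamma$ to $\beta$ given $C$. Thus the implication in (s2),
$$
\ind{\gamma}{\alpha}{C} \in \I(\G) \Rightarrow \ind{\gamma}{\beta}{C} \in \I(\G),
$$
is precisely the implication ``no $\mu$-connecting walk from $\gamma$ to $\alpha$ given $C$ implies no $\mu$-connecting walk from $\gamma$ to $\beta$ given $C$.'' Contraposing this yields ``if there exists a $\mu$-connecting walk from $\gamma$ to $\beta$ given $C$, then there exists a $\mu$-connecting walk from $\gamma$ to $\alpha$ given $C$,'' which is exactly the condition in (gs2). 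Since $\gamma$ and $C$ (with $\beta \in C$) were arbitrary, (s2) and (gs2) are equivalent. The identical argument with $\alpha$ and $\beta$ interchanged gives (s3) $\Leftrightarrow$ (gs3).

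Combining the three equivalences, $\alpha$ and $\beta$ are potential siblings in $\I(\G)$ (i.e.\ (s1)--(s3) hold) if and only if (gs1)--(gs3) hold, which is the claim. There is essentially no obstacle here: the only thing to be careful about is bookkeeping the quantifier structure, namely that the ``for all $\gamma, C$'' in front of each implication commutes with taking the contrapositive of the implication inside, which it does. I would also remark that this proposition is used repeatedly in the sequel precisely because it replaces abstract membership statements about $\I(\G)$ by concrete statements about the existence of connecting walks in $\G$, which are amenable to direct graph-theoretic manipulation.
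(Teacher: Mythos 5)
Your proposal is correct and matches the paper's approach exactly: the paper treats this proposition as immediate, stating just before it that the characterization is obtained ``by simply contraposing (s2) and (s3),'' and gives no further proof. Your careful unfolding of $\ind{\gamma}{\alpha}{C} \in \I(\G)$ as the nonexistence of a $\mu$-connecting walk, followed by contraposition under the universal quantifiers, is precisely that argument.
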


\begin{prop}
	Assume that $\alpha \leftrightarrow \beta$ is in $\mathcal{G}$. Then 
	$\alpha$ and 
	$\beta$ are potential 
	siblings in $\I(\G)$. 
	\label{prop:SiIsPoSi}
\end{prop}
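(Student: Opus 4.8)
The plan is to verify the three graphical conditions (gs1)--(gs3) of Proposition~\ref{def:graphicalPotSib} directly from the presence of the edge $\alpha \leftrightarrow \beta$ in $\mathcal{G}$; by that proposition this is equivalent to $\alpha$ and $\beta$ being potential siblings in $\mathcal{I}(\mathcal{G})$, which is what we want.

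For (gs1), I would observe that the length-one walk $\langle \alpha, \alpha\leftrightarrow\beta, \beta\rangle$ is a (bidirected) inducing path from $\alpha$ to $\beta$: it is nontrivial, has a head at $\beta$, has no non-endpoint node and hence no noncolliders, and each of $\alpha,\beta$ is an ancestor of itself. Corollary~\ref{cor:ipNonSep} then yields $\alpha \in u(\beta, \mathcal{I}(\mathcal{G}))$, and since the bidirected edge is symmetric the same argument with the roles of $\alpha$ and $\beta$ exchanged gives $\beta \in u(\alpha, \mathcal{I}(\mathcal{G}))$.

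For (gs2), suppose $\gamma\in V$ and $C\subseteq V$ with $\beta\in C$, and let $\omega$ be a $\mu$-connecting walk from $\gamma$ to $\beta$ given $C$. I would form $\omega'$ by composing $\omega$ with the walk $\langle \beta, \beta\leftrightarrow\alpha, \alpha\rangle$ and check against Definition~\ref{def:muConnSeq} that $\omega'$ is $\mu$-connecting from $\gamma$ to $\alpha$ given $C$: it is nontrivial, its first node $\gamma$ is not in $C$ (inherited from $\omega$), its last edge has a head at $\alpha$, every collider of $\omega$ is still a collider of $\omega'$ and lies in $An(C)$, and no node that is a noncollider on $\omega'$ is a newly added node, so none lies in $C$. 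The only collider/noncollider status that changes is at $\beta$, which was an endpoint of $\omega$ and becomes an interior node of $\omega'$; here I would use that $\omega$, being $\mu$-connecting to $\beta$, ends with a head at $\beta$, while the appended edge $\beta\leftrightarrow\alpha$ also has a head at $\beta$, so $\beta$ is a collider on $\omega'$, and $\beta\in An(C)$ because $\beta\in C$. Condition (gs3) follows by the mirror-image argument, composing a $\mu$-connecting walk from $\gamma$ to $\alpha$ given $C$ (with $\alpha\in C$) with $\langle\alpha,\alpha\leftrightarrow\beta,\beta\rangle$.

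The only delicate point is the bookkeeping at $\beta$ in (gs2) (and at $\alpha$ in (gs3)): one must confirm that the former endpoint becomes a \emph{collider}, not a noncollider, on the extended walk, which is precisely where the asymmetry built into $\mu$-separation --- the requirement that a $\mu$-connecting walk terminate with a head --- enters. Everything else is a routine check against the definition of a $\mu$-connecting walk.
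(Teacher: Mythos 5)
Your proposal is correct and follows essentially the same route as the paper's proof: (gs1) via the bidirected edge being an inducing path in both directions (hence Corollary~\ref{cor:ipNonSep}), and (gs2)--(gs3) by composing the given $\mu$-connecting walk with the edge $\alpha\leftrightarrow\beta$, using that the walk ends with a head so the former endpoint becomes a collider lying in $C\subseteq An(C)$. The paper's version is just a terser statement of the same bookkeeping you spell out.
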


\begin{lem}
	Assume that $\alpha$ and $\beta$ are potential siblings in 
	$\mathcal{I}(\mathcal{G})$. Let $\mathcal{G}^+$ denote the DMG obtained 
	from $\mathcal{G}$ by adding $\alpha \leftrightarrow
	\beta$. Then 
	$\mathcal{I}(\mathcal{G}) = \mathcal{I}(\mathcal{G}^+)$.
	\label{lem:AddPoSiME}
\end{lem}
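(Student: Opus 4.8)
The plan is to prove the two inclusions $\I(\G^+)\subseteq\I(\G)$ and $\I(\G)\subseteq\I(\G^+)$ separately. The first is immediate: $\G\subseteq\G^+$ and the two graphs have exactly the same directed edges, hence the same ancestor sets, so any $\mu$-connecting walk in $\G$ is also $\mu$-connecting in $\G^+$, and contrapositively $\musepG{A}{B}{C}{\G^+}$ implies $\musepG{A}{B}{C}{\G}$. For the second inclusion I would again argue contrapositively: from a $\mu$-connecting walk in $\G^+$ between the sets $A$ and $B$ given $C$, produce one in $\G$ between $A$ and $B$ given $C$ (allowing the endpoints to move inside $A$ and $B$). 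Write $e^\ast$ for the new edge $\alpha\leftrightarrow\beta$ and assume $\alpha\neq\beta$ (the bidirected-loop case $\alpha=\beta$ is analogous). I would induct on the number of occurrences of $e^\ast$ along such a walk $\omega$; with no occurrences, $\omega$ lies in $\G$ and we are done.

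If $\omega$ uses $e^\ast$, I would split it at the \emph{first} occurrence as $\omega=\sigma\overset{e^\ast}{\sim}\rho$, where $\sigma$ runs from the source endpoint $a$ to one endpoint of $e^\ast$ using $e^\ast$ nowhere and $\rho$ runs from the other endpoint of $e^\ast$ to the sink endpoint $b$ (either segment may be trivial). Since the three conditions defining potential siblings (Definition~\ref{def:potSib}) are symmetric under interchanging $\alpha$ and $\beta$, I would relabel so that $\sigma$ ends at $\alpha$ and $\rho$ begins at $\beta$, and then try to replace the prefix $\sigma\overset{e^\ast}{\sim}\beta$ by a walk lying in $\G$ that runs from $a$ to $\beta$ and glues onto $\rho$; this lowers the count of $e^\ast$ by one, so the induction hypothesis applies.

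The construction splits into two cases. If $\alpha\notin C$, then by (gs1) of Proposition~\ref{def:graphicalPotSib} we have $\alpha\in u(\beta,\I(\G))$, so (as $C\subseteq V\setminus\{\alpha\}$) there is a $\mu$-connecting walk $\mu_1$ from $\alpha$ to $\beta$ given $C$ in $\G$, and I would take $\omega'=\sigma\cdot\mu_1\cdot\rho$. If $\alpha\in C$, then $\alpha\neq a$, so $\alpha$ occurs internally on $\omega$ at the step before the first $e^\ast$; since a $\mu$-connecting walk has no non-collider in the conditioning set, this occurrence is a collider, hence the last edge of $\sigma$ has a head at $\alpha$ and $\sigma$ is itself $\mu$-connecting from $a$ to $\alpha$ given $C$ in $\G$; then (gs3) supplies a $\mu$-connecting walk $\tau$ from $a$ to $\beta$ given $C$ in $\G$, and I would take $\omega'=\tau\cdot\rho$. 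In both cases $\omega'$ uses $e^\ast$ strictly fewer times than $\omega$ ($\sigma$ precedes the first $e^\ast$; $\mu_1,\tau$ lie in $\G$; $\rho$ has shed one copy), and it then remains to check that $\omega'$ is $\mu$-connecting from $a$ to $b$ given $C$ in $\G$: its source $a\notin C$; its last edge is the last edge of $\omega$, or of $\mu_1$/$\tau$ when $\rho$ is trivial, so it has a head at $b$; at the junction $\alpha$ (when $\sigma$ is nontrivial) the walk is a collider in $An(C)$ or a non-collider outside $C$, the relevant membership being inherited from $\alpha$'s status on $\omega$; at the junction $\beta$ the collider status and its compatibility with $C$ are forced by the first edge of $\rho$ exactly as on $\omega$; and internal vertices of $\sigma$ and $\rho$ retain their $\omega$-status, while those of $\mu_1$ or $\tau$ are unproblematic because those walks are $\mu$-connecting. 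This closes the induction.

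The hard part will be the case $\alpha\in C$: there (gs1) gives nothing and one is forced to invoke the asymmetric condition (gs3) (equivalently (gs2) before relabelling), which is exactly the non-symmetric ingredient of the potential-siblings definition — so this lemma genuinely relies on the asymmetry of $\mu$-separation. Everything else is routine bookkeeping: that $\G$ and $\G^+$ share all directed edges (so ``$\mu$-connecting'' and ``$An(\cdot)$'' read the same in both), that Definition~\ref{def:potSib} is symmetric in $\alpha$ and $\beta$ (licensing the relabelling), and that each splicing junction keeps a status compatible with $C$ while the number of occurrences of $e^\ast$ strictly decreases.
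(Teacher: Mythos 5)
Your proof is correct and is in essence the paper's argument: both proofs establish $\I(\G^+)\subseteq\I(\G)$ by noting that ancestor sets are unchanged, and both prove the converse by surgery on a connecting walk in $\G^+$, splitting on whether the endpoint of the new edge reached first is in $C$ — using (gs1) to splice in an open walk from $\alpha$ to $\beta$ when $\alpha\notin C$, and (gs2)/(gs3) on the prefix (which then necessarily has a head at $\alpha$, since $\alpha\in C$ forces a collider there) when $\alpha\in C$. Two organizational differences are worth noting. First, the paper invokes Proposition \ref{prop:muConnPath} to reduce to a \emph{route}, so that $e$ occurs at most twice, and then treats the one- and two-occurrence cases separately; your induction on the number of occurrences of $e^\ast$ handles these uniformly and is arguably cleaner. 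Second, the paper routes the case $\alpha\notin C$ through the inducing-path machinery (an inducing path from $\alpha$ to $\beta$ exists by (gs1) and Proposition \ref{prop:noIPsep}, and Proposition \ref{prop:IPepiConnPath} yields a $\nu$-induced open walk), whereas you observe that inseparability of $\beta$ from $\alpha$ \emph{directly} supplies a $\mu$-connecting walk from $\alpha$ to $\beta$ given the specific $C$ at hand; this shortcut is valid and avoids two auxiliary propositions, and your junction analysis at $\alpha$ correctly exploits $\alpha\notin C$ to dispense with endpoint-identicality. One small wording slip: after the splice, $\omega'$ may still contain copies of $e^\ast$ inside $\rho$, so it is a walk in $\G^+$ (to which the induction hypothesis applies), not yet one in $\G$; since the two graphs have identical ancestor sets this does not affect any of your connectivity checks.
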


The above shows that if $\alpha$ and $\beta$ are potential siblings in $\I(\G)$ 
then there exists a supergraph, $\G^+$, which is Markov equivalent with $\G$ such 
that $\alpha$ and $\beta$ are siblings in $\G^+$. This motivates the term 
{\it potential} siblings.

\subsection{Potential parents}

In this section, we will argue that also a set of {\it potential parents} are 
determined by the independence model. This case is slightly more involved for 
two reasons. First, the relation 
is asymmetric, as for each potential parent edge there is a parent node and a 
child node. Second, adding directed edges potentially changes the ancestry of 
the graph.

\begin{defn}
	Let $\I$ be an independence model over $V$ and let $\alpha, \beta \in V$. 
	We say that $\alpha$ is a {\it potential parent} of $\beta$ in $\I$ if 
	(p1)--(p4) hold:

	\begin{enumerate}[label=(p\arabic*)]
		\item $\alpha \in u(\beta, \mathcal{I})$,
		\item for all $\gamma \in V$, $C \subseteq V$ such that $\alpha\notin 
		C$,
		$$
		\ind{\gamma}{\beta}{C} \in \I \Rightarrow \ind{\gamma}{\alpha}{C} \in 
		\I,
		$$
		\item for all $\gamma,\delta \in V$, $C \subseteq V$ such that 
		$\alpha \notin C, \beta \in C$,
		\begin{align*}
		\ind{\gamma}{\delta}{C} \in \I  \Rightarrow
		\ind{\gamma}{\beta}{C} \in \I \lor 
		\ind{\alpha}{\delta}{C} \in \I ,
		\end{align*}
		\item for all $\gamma \in V, C\subseteq V$, such that 
		$\alpha \notin C$,
		$$
		\ind{\beta}{\gamma}{C} \in \I \Rightarrow 
		\ind{\beta}{\gamma}{C \cup \{\alpha\}} \in \I.
		$$
	\end{enumerate}
	\label{def:potPa}
\end{defn}

\begin{prop}
	Let $\I(\G)$ be the independence model induced by $\G$. Then 
	$\alpha \in V$ is a potential parent of $\beta \in V$ if and only if 
	(gp1)--(gp4) 
	hold: 
	
	\begin{enumerate}[label=(gp\arabic*)]
		\item	$\alpha \in u(\beta, \mathcal{I}(\G))$,
		\item for all $\gamma \in V$, $C \subseteq V$ such that $\alpha\notin 
		C$: if there exists a $\mu$-connecting walk from $\gamma$ to $\alpha$ 
		given $C$, then there exists a $\mu$-connecting walk from $\gamma$ to 
		$\beta$ given $C$,
		\item for all $\gamma,\delta \in V$, $C \subseteq V$ such that 
		$\alpha \notin C, \beta \in C$: if there exists a $\mu$-connecting walk 
		from $\gamma$ to $\beta$ given $C$ and a $\mu$-connecting walk from 
		$\alpha$ to $\delta$ given $C$, then there exists a $\mu$-connecting 
		walk from $\gamma$ to $\delta$ given $C$,
		\item for all $\gamma \in V, C\subseteq V$, such that 
		$\alpha \notin C$: if there exists a $\mu$-connecting walk from $\beta$ 
		to $\gamma$ given $C \cup \{\alpha\}$, then there exists a 
		$\mu$-connecting walk from $\beta$ to $\gamma$ given $C$.
	\end{enumerate}
	\label{def:graphicalPotPa}
\end{prop}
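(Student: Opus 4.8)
The plan is to prove this as a direct translation result, entirely parallel to how Proposition~\ref{def:graphicalPotSib} was obtained from Definition~\ref{def:potPa}: the ``graphical'' conditions (gp1)--(gp4) are exactly the contrapositives of the ``abstract'' conditions (p1)--(p4) once we unfold the meaning of membership in $\I(\G)$. Concretely, recall that $\ind{\gamma}{\beta}{C}\in\I(\G)$ means $\musepG{\gamma}{\beta}{C}{\G}$, which by Definition~\ref{def:muSep} means there is \emph{no} $\mu$-connecting walk from $\gamma$ to $\beta$ given $C$; conversely $\ind{\gamma}{\beta}{C}\notin\I(\G)$ means such a walk \emph{does} exist. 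So I would establish the equivalence condition by condition.

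First, (p1) and (gp1) are literally the same statement, since $u(\beta,\I)$ in Definition~\ref{def:potPa} is instantiated with $\I=\I(\G)$; nothing to prove. For (p2) $\Leftrightarrow$ (gp2): fix $\gamma\in V$ and $C\subseteq V$ with $\alpha\notin C$. The implication $\ind{\gamma}{\beta}{C}\in\I\Rightarrow\ind{\gamma}{\alpha}{C}\in\I$ is, by contraposition, equivalent to $\ind{\gamma}{\alpha}{C}\notin\I\Rightarrow\ind{\gamma}{\beta}{C}\notin\I$, i.e.\ ``if there is a $\mu$-connecting walk from $\gamma$ to $\alpha$ given $C$ then there is one from $\gamma$ to $\beta$ given $C$,'' which is exactly (gp2). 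The same mechanical contraposition handles (p4) $\Leftrightarrow$ (gp4): the implication $\ind{\beta}{\gamma}{C}\in\I\Rightarrow\ind{\beta}{\gamma}{C\cup\{\alpha\}}\in\I$ contraposes to ``if there is a $\mu$-connecting walk from $\beta$ to $\gamma$ given $C\cup\{\alpha\}$ then there is one from $\beta$ to $\gamma$ given $C$,'' which is (gp4). (Here one should note $\alpha\notin C$ so that $C\cup\{\alpha\}$ is the intended strictly larger set, matching the quantifier in (p4).)

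The only condition requiring a moment's care is (p3) $\Leftrightarrow$ (gp3), because the abstract statement has the form ``$P\Rightarrow Q\lor R$'' rather than a single implication. Fix $\gamma,\delta\in V$ and $C\subseteq V$ with $\alpha\notin C$ and $\beta\in C$. Contraposing ``$\ind{\gamma}{\delta}{C}\in\I\Rightarrow\ind{\gamma}{\beta}{C}\in\I\lor\ind{\alpha}{\delta}{C}\in\I$'' gives ``$\bigl(\ind{\gamma}{\beta}{C}\notin\I\bigr)\land\bigl(\ind{\alpha}{\delta}{C}\notin\I\bigr)\Rightarrow\ind{\gamma}{\delta}{C}\notin\I$,'' using that the negation of a disjunction is the conjunction of the negations. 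Unfolding each $\notin\I$ into ``there exists a $\mu$-connecting walk'' turns this into precisely (gp3): if there is a $\mu$-connecting walk from $\gamma$ to $\beta$ given $C$ and a $\mu$-connecting walk from $\alpha$ to $\delta$ given $C$, then there is a $\mu$-connecting walk from $\gamma$ to $\delta$ given $C$. Conjoining the four equivalences yields ``(p1)--(p4) $\Leftrightarrow$ (gp1)--(gp4)'' for $\I=\I(\G)$, which is the claim.

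The main (and only) obstacle is purely bookkeeping: making sure the side conditions on the conditioning sets ($\alpha\notin C$, $\beta\in C$) are carried through the contrapositions unchanged, since they sit outside the implications being negated and so are untouched, and making sure the asymmetry of $\mu$-separation is respected --- e.g.\ in (gp2) the walk goes \emph{to} $\alpha$ and \emph{to} $\beta$ matching the right-hand argument of $\ind{\cdot}{\cdot}{\cdot}$, while in (gp4) the walk goes \emph{from} $\beta$, matching the left-hand argument. Since Definition~\ref{def:muConnSeq} fixes which endpoint must carry the head, once the direction of each walk is read off correctly from the corresponding independence statement there is nothing further to check. No structural graph-theoretic argument (inducing paths, ancestry, latent projection) is needed here; those enter only when one later uses potential parents to build the maximal element.
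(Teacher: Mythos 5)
Your proposal is correct and matches the paper's approach exactly: the paper treats this proposition (like its sibling analogue) as an immediate consequence of contraposing (p2)--(p4) and unfolding $\ind{\cdot}{\cdot}{\cdot}\notin\I(\G)$ into the existence of a $\mu$-connecting walk, which is precisely what you do, including the correct handling of the disjunction in (p3) and of the walk directions. Nothing is missing.
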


\begin{prop}
	Assume that $\alpha \rightarrow \beta$ is in $\mathcal{G}$. Then $\alpha$ 
	is a 
	potential parent of $\beta$ in $\mathcal{I}(\mathcal{G})$. 
	\label{prop:PaIsPoPa}
\end{prop}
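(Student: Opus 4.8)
The plan is to appeal to the graphical reformulation in Proposition~\ref{def:graphicalPotPa} and to verify (gp1)--(gp4) directly, in each case splicing the single edge $\alpha \rightarrow \beta$ (possibly traversed backwards) onto the $\mu$-connecting walks supplied by the hypotheses; I will write $\omega \circ \omega'$ for the composition of walks. Two preliminary remarks simplify matters. Since $\alpha \rightarrow \beta \in \G$ we may assume $\alpha \neq \beta$, because $\beta = \alpha$ would force $\beta$ into the conditioning set named in the hypothesis of each of (gp2)--(gp4) and render it vacuous. And the walk $\langle \alpha, \alpha\!\rightarrow\!\beta, \beta\rangle$ is $\mu$-connecting from $\alpha$ to $\beta$ given every $C$ with $\alpha \notin C$ (it is nontrivial, carries a head at $\beta$, and has no internal node), equivalently $\alpha \rightarrow \beta$ is an inducing path from $\alpha$ to $\beta$, so Corollary~\ref{cor:ipNonSep} already yields (gp1).

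For (gp2) and (gp3) the move is the same. Given a $\mu$-connecting walk $\omega$ from $\gamma$ to $\alpha$ given $C$ with $\alpha \notin C$, I would form $\omega \circ \langle \alpha, \alpha\!\rightarrow\!\beta, \beta\rangle$: the junction $\alpha$ is a noncollider on this walk, since the last edge of $\omega$ has a head at $\alpha$ while $\alpha \rightarrow \beta$ has a tail there, and $\alpha \notin C$; every other internal node keeps its collider/noncollider status from $\omega$, so all colliders lie in $An(C)$ and no noncollider lies in $C$; the first node $\gamma \notin C$ and the last edge has a head at $\beta$. Hence the walk is $\mu$-connecting from $\gamma$ to $\beta$ given $C$, which is (gp2). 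For (gp3), with $\alpha \notin C$, $\beta \in C$, and $\mu$-connecting walks $\omega_1$ from $\gamma$ to $\beta$ and $\omega_2$ from $\alpha$ to $\delta$ given $C$, I would take $\omega_1 \circ \langle \beta, \alpha\!\rightarrow\!\beta, \alpha\rangle \circ \omega_2$; here the junction $\beta$ is a collider but $\beta \in C \subseteq An(C)$, the junction $\alpha$ is a noncollider with $\alpha \notin C$, and the usual bookkeeping on the first node, the last edge, and the (status-preserving) internal nodes of $\omega_1,\omega_2$ shows the walk is $\mu$-connecting from $\gamma$ to $\delta$ given $C$.

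The delicate condition is (gp4), where the conditioning set must \emph{shrink} from $C \cup \{\alpha\}$ to $C$, so a priori colliders previously justified only by $\alpha \in An(C \cup \{\alpha\})$ could become illegal. Here is how I would handle it. Start from a $\mu$-connecting walk $\omega$ from $\beta$ to $\gamma$ given $C \cup \{\alpha\}$, and use Proposition~\ref{prop:muConnAllCollInC} to assume every collider on $\omega$ lies in $C \cup \{\alpha\}$. Since $\omega$ is $\mu$-connecting given $C \cup \{\alpha\}$, the node $\alpha$ is never a noncollider on $\omega$, so each occurrence of $\alpha$ is either the endpoint $\gamma$ or an internal collider. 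If there is no internal $\alpha$-collider, then all colliders of $\omega$ lie in $C$, the noncolliders (being outside $C \cup \{\alpha\}$) lie outside $C$, the first node $\beta \notin C \cup \{\alpha\}$, and the last edge has a head at $\gamma$, so $\omega$ itself already works given $C$. Otherwise, split $\omega = \omega_1 \circ \omega_2$ at the \emph{last} internal occurrence of $\alpha$ as a collider; then $\omega_2$ runs from $\alpha$ to $\gamma$ and, by maximality of the split point, every internal collider of $\omega_2$ is distinct from $\alpha$ and hence lies in $C$. I would now discard $\omega_1$ entirely and set $\tilde\omega = \langle \beta, \alpha\!\rightarrow\!\beta, \alpha\rangle \circ \omega_2$: the junction $\alpha$ is a noncollider with $\alpha \notin C$, the internal nodes of $\omega_2$ keep their status (colliders in $C$, noncolliders outside $C \cup \{\alpha\} \supseteq C$), the first node $\beta$ lies outside $C$, and the last edge inherited from $\omega$ has a head at $\gamma$; so $\tilde\omega$ is $\mu$-connecting from $\beta$ to $\gamma$ given $C$, establishing (gp4).

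I expect the only real obstacle to be (gp4): the temptation is to try to \emph{repair} the original walk around each $\alpha$-collider, whereas the right move is to reroute through the reversed edge $\beta \leftarrow \alpha$ after the reduction via Proposition~\ref{prop:muConnAllCollInC}, which destroys the offending $\alpha$-collider while preserving the head at $\beta$; the remaining checks in (gp1)--(gp3) are routine walk-surgery. Once (gp1)--(gp4) are in place, Proposition~\ref{def:graphicalPotPa} gives that $\alpha$ is a potential parent of $\beta$ in $\I(\G)$.
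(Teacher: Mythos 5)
Your proof is correct and follows essentially the same route as the paper: verify (gp1)--(gp4) via the graphical characterization in Proposition \ref{def:graphicalPotPa}, splicing the edge $\alpha\rightarrow\beta$ (or its reversal) onto the $\mu$-connecting walks supplied by the hypotheses. The only divergence is in (gp4), where the paper locates the collider $\delta \in An(\alpha)\setminus An(C)$ closest to $\gamma$ and reroutes through the reversed directed path $\beta \leftarrow \alpha \leftarrow \cdots \leftarrow \delta$ before continuing along the original walk, whereas you first normalize via Proposition \ref{prop:muConnAllCollInC} so that the offending collider is literally $\alpha$ and the reroute collapses to the single reversed edge $\beta\leftarrow\alpha$; both are valid, and yours is a slightly cleaner execution of the same idea.
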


\begin{lem}
	Assume that $\alpha$ is a potential parent of $\beta$ in 
	$\mathcal{I}(\mathcal{G})$. Let $\mathcal{G}^+$ denote the DMG obtained 
	from $\mathcal{G}$ by adding $\alpha \rightarrow \beta$. Then 
	$\mathcal{I}(\mathcal{G}) = \mathcal{I}(\mathcal{G}^+)$.
	\label{lem:AddPoPaME}
\end{lem}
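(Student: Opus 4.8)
The plan is to show the two inclusions $\I(\G) \supseteq \I(\G^+)$ and $\I(\G) \subseteq \I(\G^+)$ separately. The first is immediate: $\G \subseteq \G^+$, so any $\mu$-connecting walk in $\G$ is also $\mu$-connecting in $\G^+$ (adding an edge cannot destroy a connecting walk, and by Proposition \ref{prop:preserveAn} the ancestor sets used to evaluate colliders only grow), hence $\musepG{A}{B}{C}{\G^+}$ implies $\musepG{A}{B}{C}{\G}$. The substance is the converse: I must show that if there is a $\mu$-connecting walk $\omega$ from some $a \in A$ to some $b \in B$ given $C$ in $\G^+$, then there is already one in $\G$. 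It suffices to treat a $\mu$-connecting \emph{route} by Proposition \ref{prop:muConnPath}, and by Proposition \ref{prop:muConnAllCollInC} I may assume every collider on it lies in $C$.

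The core argument is an induction on the number of occurrences of the new edge $e\colon \alpha \rightarrow \beta$ in the route $\omega$; a route uses any given edge a bounded number of times, so this terminates. Consider one occurrence of $e$ on $\omega$. Because $e$ has a head at $\beta$ and a tail at $\alpha$, the node $\beta$ (if interior) is either a collider with the adjacent edge also pointing into $\beta$, or a noncollider via the tail of the adjacent edge — but then that adjacent edge points \emph{out} of $\beta$; and $\alpha$ (if interior) is always a noncollider at this occurrence, since $e$ contributes a tail at $\alpha$. I split $\omega$ at this occurrence of $e$ into the sub-walk $\omega_1$ ending at the $\alpha$-endpoint of $e$ and the sub-walk $\omega_2$ starting at the $\beta$-endpoint of $e$ (one of these may be trivial if $a$ or $b$ is $\alpha$ or $\beta$). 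The idea is to reroute around $e$ using the properties (gp1)--(gp4) of Proposition \ref{def:graphicalPotPa}. There are a few configurations to handle according to what lies on either side of $e$:

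\emph{(i)} If $\beta$ is the endpoint $b$ of the route, then $\omega_1$ is a walk into $\alpha$, i.e.\ a $\mu$-connecting walk from some $\gamma$ to $\alpha$ given the relevant conditioning set with $\alpha \notin C$ (note $\alpha \notin C$ may be arranged, or if $\alpha \in C$ the noncollider $\alpha$ would block, contradiction); then (gp2) supplies a $\mu$-connecting walk from $\gamma$ to $\beta$, removing this occurrence of $e$. \emph{(ii)} If $\beta$ is an interior collider (so $\beta \in C$) with predecessor sub-walk $\omega_1 \overset{e}{\to}\beta$ and successor edge $\beta \starleftarrow \delta$-part, I combine the walk $\omega_1$ into $\alpha$ with the walk out of $\beta$ and apply (gp3) to splice them into a $\mu$-connecting walk from $\gamma$ to $\delta$ given $C$, again eliminating $e$; the hypothesis $\beta \in C$ of (gp3) is exactly the collider condition. \emph{(iii)} If $\beta$ is an interior noncollider, the successor edge leaves $\beta$, and I use (gp2) on the initial segment as in (i) to land at $\beta$ through a walk in $\G$ and then continue along $\omega_2$. \emph{(iv)} The remaining delicate case is when $\alpha \in C$ would be forced; here I use (gp4), which says conditioning sets can absorb $\alpha$ without creating connection, to show the offending walk would have to pass $\alpha$ as a noncollider and hence could not have been open — i.e.\ (gp4) rules out the bad configuration. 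After rerouting, I verify the new walk is still $\mu$-connecting given $C$: its endpoint mark at $b$ is unchanged (it still has a head at $\beta$ or is the original tail-free segment), colliders remain in $An(C)$ by Proposition \ref{prop:preserveAn} applied to $\G$ (the rerouting uses only edges of $\G$), and no new noncollider in $C$ is introduced because the spliced walks were themselves $\mu$-connecting given $C$.

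The main obstacle I anticipate is the bookkeeping at the splice point: when two $\mu$-connecting walks are composed at a common node to invoke (gp3), that node becomes a collider of the composed walk and one must check it lies in $An(C)$ (it is in $C$ by hypothesis) and that the endpoint marks match what (gp3) asserts; symmetrically, verifying that the rerouted walk does not acquire a repeated structure that fails to reduce the count of $e$'s — this is handled by passing back to a route via Proposition \ref{prop:muConnPath} after each rerouting step, so the induction is on a well-founded quantity. Once all occurrences of $e$ are removed we have a $\mu$-connecting walk from $a$ to $b$ given $C$ in $\G$, contradicting $\musepG{A}{B}{C}{\G}$; hence $\musepG{A}{B}{C}{\G} \Rightarrow \musepG{A}{B}{C}{\G^+}$, completing the equality $\I(\G) = \I(\G^+)$.
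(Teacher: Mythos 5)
Your overall shape --- reduce to a route, then eliminate occurrences of the new edge $e = \alpha\rightarrow\beta$ one at a time using (gp1)--(gp4) --- matches the paper's strategy, but there are two genuine gaps. First, you have not dealt with the fact that adding $\alpha\rightarrow\beta$ changes the \emph{ancestor sets}: a walk in $\G^+$ containing no occurrence of $e$ at all can still fail to be $\mu$-connecting in $\G$, because one of its colliders may lie in $An_{\G^+}(C)\setminus An_{\G}(C)$ (an ancestor of $C$ only via the new edge). Your base case ``once all occurrences of $e$ are removed we have a $\mu$-connecting walk in $\G$'' is therefore false as stated. Your attempted fix --- invoking Proposition \ref{prop:muConnAllCollInC} to put all colliders in $C$ --- does not survive the rest of your construction: the detours that proposition inserts are directed paths in $\G^+$ that may themselves contain $e$ (increasing the count your induction is on), and your subsequent reductions to routes via Proposition \ref{prop:muConnPath} reintroduce colliders that are only in $An(C)$, not in $C$. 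The same problem infects each application of (gp2)/(gp3): these require the spliced segment to be $\mu$-connecting \emph{in $\G$}, which you have not verified. The paper spends roughly half of its proof on exactly this issue, showing that a ``newly closed'' collider forces a directed $\G$-path from it to $\alpha$ avoiding $C$ and forces $\beta\in An_\G(C)$, and then rerouting through $\alpha$ and (gp2) in three subcases before ever touching the occurrence of $e$ itself. (Your citation of Proposition \ref{prop:preserveAn} here is also a misapplication --- that result concerns latent projections, not edge additions.)

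Second, your case analysis omits the hardest configuration: the route traverses $e$ backwards, $\gamma \sim \cdots \sim \beta \leftarrow \alpha \sim \cdots \starrightarrow \delta$, with a \emph{tail} at $\beta$ on the preceding edge (so $\beta$ is a noncollider and (gp3) is unavailable). Handling this requires machinery you never invoke: by (gp1) and Proposition \ref{prop:noIPsep} there is an inducing path $\nu$ from $\alpha$ to $\beta$ in $\G$, and Proposition \ref{prop:IPepiConnPath} yields a $\nu$-induced open walk from $\alpha$ to $\beta$; reversing it and composing with the tail segment from $\alpha$ to $\delta$ may turn $\alpha$ into a collider not in $An_\G(C)$, so the composition is only $\mu$-connecting given $C\cup\{\alpha\}$, and it is precisely (gp4) that lets you pass back to conditioning on $C$. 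Your description of (gp4) as ``ruling out the bad configuration where $\alpha\in C$ would be forced'' is not its role and would not close this case. Until both the newly-closed-collider bookkeeping and the backward-traversal case are supplied, the second inclusion $\I(\G)\subseteq\I(\G^+)$ is not established.
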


\subsection{A Markov equivalent supergraph}

Let $\mathcal{G} = (V,E)$ be a DMG. Define 
$\mathcal{N}(\mathcal{I}(\mathcal{G})) = (V,E^m)$ to 
be the DMG with edge set $E^m = E^d \cup E^b$ where $E^d$ is a set of 
directed edges and $E^b$ a set of bidirected edges such that the directed edge 
from $\alpha$ to $\beta$ is in $E^d$ if and only if $\alpha$ is a potential 
parent of $\beta$ in $\mathcal{I}(\mathcal{G})$ and the bidirected edge between 
$\alpha$ 
and $\beta$ is in $E^b$ if and only if $\alpha$ and $\beta$ are potential 
siblings in $\mathcal{I}(\mathcal{G})$.

\begin{thm}
	Let $\mathcal{N} = 
	\mathcal{N}(\mathcal{I}(\mathcal{G}))$. Then $\mathcal{N}\in 
	[\mathcal{G}]$ and $\mathcal{N}$ is a supergraph of all elements of 
	$[\mathcal{G}]$. Furthermore, if we have a finite sequence of DMGs
	$\mathcal{G}_0, \mathcal{G}_1, \ldots, \mathcal{G}_m$, $\mathcal{G}_i = 
	(V,E_i)$, such that $\mathcal{G}_0 = \mathcal{G}$, $\mathcal{G}_m = 
	\mathcal{N}$, and $E_i \subseteq E_{i+1}$ for all $i =0, \ldots, m -1$, 
	then $\G_i$ is Markov equivalent with $\mathcal{N}$ for all $i =0, \ldots, 
	m -1$.
	\label{thm:maxElm}
\end{thm}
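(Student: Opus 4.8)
The plan is to bootstrap the theorem from the four preceding results on potential parents and potential siblings, treating it essentially as a bookkeeping argument. I would first record that $\mathcal{G} \subseteq \mathcal{N}$: by Proposition~\ref{prop:PaIsPoPa} every directed edge $\alpha \rightarrow_{\mathcal{G}} \beta$ certifies that $\alpha$ is a potential parent of $\beta$ in $\mathcal{I}(\mathcal{G})$, hence $\alpha \rightarrow_{\mathcal{N}} \beta$; and by Proposition~\ref{prop:SiIsPoSi} every bidirected edge $\alpha \leftrightarrow_{\mathcal{G}} \beta$ makes $\alpha,\beta$ potential siblings, hence $\alpha \leftrightarrow_{\mathcal{N}} \beta$. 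Thus all edges of $\mathcal{G}$ lie in $\mathcal{N}$.

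Next I would show $\mathcal{N} \in [\mathcal{G}]$. Enumerate the edges of $\mathcal{N}$ not in $\mathcal{G}$ as $e_1, \dots, e_k$ in an arbitrary order, and put $\mathcal{G}_0 = \mathcal{G}$ and $\mathcal{G}_j = (V, E_{j-1} \cup \{e_j\})$. I claim $\mathcal{I}(\mathcal{G}_j) = \mathcal{I}(\mathcal{G})$ for every $j$, by induction on $j$. The base case is immediate. For the inductive step, note that being a potential parent or a potential sibling is, by Definitions~\ref{def:potPa} and~\ref{def:potSib}, a function of the independence model alone; since $\mathcal{I}(\mathcal{G}_{j-1}) = \mathcal{I}(\mathcal{G})$ by the inductive hypothesis, the edge $e_j$ — which by the construction of $\mathcal{N}$ witnesses a potential-parent relation (if $e_j$ is directed) or a potential-sibling relation (if $e_j$ is bidirected) in $\mathcal{I}(\mathcal{G})$ — witnesses the same relation in $\mathcal{I}(\mathcal{G}_{j-1})$. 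Lemma~\ref{lem:AddPoPaME} or Lemma~\ref{lem:AddPoSiME} then yields $\mathcal{I}(\mathcal{G}_j) = \mathcal{I}(\mathcal{G}_{j-1}) = \mathcal{I}(\mathcal{G})$. Taking $j = k$ gives $\mathcal{I}(\mathcal{N}) = \mathcal{I}(\mathcal{G})$, so $\mathcal{N} \in [\mathcal{G}]$.

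For the supergraph claim, let $\mathcal{H} \in [\mathcal{G}]$, so $\mathcal{I}(\mathcal{H}) = \mathcal{I}(\mathcal{G})$. Running the argument of the first paragraph with $\mathcal{H}$ in place of $\mathcal{G}$ — i.e. applying Propositions~\ref{prop:PaIsPoPa} and~\ref{prop:SiIsPoSi} to the edges of $\mathcal{H}$ and using $\mathcal{I}(\mathcal{H}) = \mathcal{I}(\mathcal{G})$ — shows every edge of $\mathcal{H}$ is an edge of $\mathcal{N}$, so $\mathcal{H} \subseteq \mathcal{N}$. (Equivalently, $\mathcal{N}(\mathcal{I}(\mathcal{H})) = \mathcal{N}(\mathcal{I}(\mathcal{G})) = \mathcal{N}$ for every $\mathcal{H}$ in the class, so the same graph is obtained from any member.) For the final assertion, I would use the elementary monotonicity of $\mu$-separation: if $\mathcal{G}' = (V,E') \subseteq \mathcal{G}'' = (V,E'')$, then any $\mu$-connecting walk in $\mathcal{G}'$ given $C$ is still a $\mu$-connecting walk in $\mathcal{G}''$ given $C$, since its edges all persist, its colliders/noncolliders and head at the terminal node are intrinsic to the walk, and $An_{\mathcal{G}'}(C) \subseteq An_{\mathcal{G}''}(C)$; hence $\mathcal{I}(\mathcal{G}'') \subseteq \mathcal{I}(\mathcal{G}')$. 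Applying this along $\mathcal{G} = \mathcal{G}_0 \subseteq \mathcal{G}_i \subseteq \mathcal{G}_m = \mathcal{N}$ gives $\mathcal{I}(\mathcal{N}) \subseteq \mathcal{I}(\mathcal{G}_i) \subseteq \mathcal{I}(\mathcal{G})$, and since $\mathcal{I}(\mathcal{N}) = \mathcal{I}(\mathcal{G})$ the three models coincide, so $\mathcal{G}_i \in [\mathcal{G}] = [\mathcal{N}]$.

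The substantive content has already been discharged in Lemmas~\ref{lem:AddPoPaME} and~\ref{lem:AddPoSiME} and Propositions~\ref{prop:PaIsPoPa} and~\ref{prop:SiIsPoSi}, so I do not expect a serious obstacle here. The one point that requires care is the induction in the second paragraph: one must check that adding an already-certified edge does not spoil the certification of the edges still to be added. This is exactly why it is essential that ``potential parent'' and ``potential sibling'' depend only on $\mathcal{I}(\mathcal{G})$ and not on the current graph, and I would make that invariance explicit rather than leave it implicit.
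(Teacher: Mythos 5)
Your proposal is correct and follows essentially the same route as the paper's proof: Propositions~\ref{prop:PaIsPoPa} and~\ref{prop:SiIsPoSi} give $\mathcal{H}\subseteq\mathcal{N}$ for every $\mathcal{H}\in[\mathcal{G}]$ (since $\mathcal{N}$ depends only on the independence model), and Lemmas~\ref{lem:AddPoPaME} and~\ref{lem:AddPoSiME} let you add the missing edges one at a time Markov equivalently. Your explicit induction and the sandwich argument $\mathcal{I}(\mathcal{N})\subseteq\mathcal{I}(\mathcal{G}_i)\subseteq\mathcal{I}(\mathcal{G})$ for the final clause are welcome elaborations of steps the paper leaves implicit.
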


The graph $\mathcal{N}$ in the above theorem is a supergraph of every Markov 
equivalent DMG and therefore maximal. On the other hand, every maximal DMG is 
a representative of its equivalence class, and also a supergraph of all Markov 
equivalent DMGs. This means that we can use the class of maximal DMGs to obtain 
a unique representative for each DMG equivalence 
class.

Lemmas \ref{lem:AddPoSiME} and \ref{lem:AddPoPaME} show that conditions 
(gs1)--(gs3) and (gp1)--(gp4) are sufficient to Markov equivalently add a 
bidirected or a directed 
edge, respectively. The conditions are also necessary in the sense that for 
each 
condition one can find example graphs where only a single condition is violated 
and 
where the larger graph is not Markov equivalent to the smaller graph.

We can note that $\alpha$ is a potential parent and a potential sibling of 
$\alpha$ if and only if $\alpha \in u(\alpha, \I(\G))$. This means that in 
$\mathcal{N}(\mathcal{I}(\mathcal{G}))$ for each node either both a directed 
and a bidirected loop is present or no loop at all.

\subsection{Directed mixed equivalence graphs}
\label{ssec:dmeg}

Theorem \ref{thm:maxElm} suggests that one can represent an equivalence class 
of DMGs by displaying the maximal element and then 
simply indicate which edges are not present for all members of the 
equivalence class.

\begin{defn}[DMEG]
	Let 
	$\mathcal{N} = (V,F)$ be a maximal DMG. 
	Define 
	$\bar{F}\subseteq F$ such that for $e\in F$ we let $e\in 
	\bar{F}$ if and only if there exists a DMG $\mathcal{G} = 
	(V,\tilde{F})$ such that $\G \in [\mathcal{N}]$ and $e\notin \tilde{F}$. 
	We call 
	$\mathcal{N}' = (V,F,\bar{F})$ a {\it directed mixed equivalence graph} 
	(DMEG). When visualizing $\mathcal{N}'$, we draw $\mathcal{N}$, but use 
	dashed edges 
	for the set $\bar{F}$, see Figure \ref{fig:DMEG}.
	\label{def:markDMG}
\end{defn}

Let $\mathcal{N}' = 
(V,F,\bar{F})$ be a DMEG.  The 
DMG $(V,F)$ is in the equivalence class represented by $\mathcal{N}'$. However, 
one cannot necessarily remove any subset of $\bar{F}$ and obtain a member of 
the Markov equivalence class (see Fi\-gu\-re \ref{fig:DMEG}). Moreover, an 
equivalence class does not in general contain a 
least 
element, that is, an element which is a subgraph of all Markov equivalent 
graphs.

We will throughout this section let $\mathcal{N} = (V,F)$ be a maximal DMG. For 
$e\in F$ we will use $\mathcal{N} - e$ to denote the graph $(V, F\setminus 
\{e\})$. Assume that we have a maximal DMG
from which we wish to derive the DMEG. Consider some 
edge 
$e\in F$. If $\mathcal{N} - e \in [\mathcal{N}]$, then $e\in 
\bar{F}$ as there exists a Markov 
equivalent subgraph of $\mathcal{N}$ in which $e$ is not present. On the other 
hand, if $\mathcal{N} - e \notin [\mathcal{N}]$ then we note that 
$\mathcal{N} - e$ is the largest subgraph of $\mathcal{N}$ that does not 
contain 
$e$. Let $\mathcal{K}$ be a subgraph of $\mathcal{N}$ that does not contain 
$e$. 
Then $\mathcal{I}(\mathcal{N}) \subsetneq \mathcal{I}(\mathcal{N} - e) 
\subseteq 
\mathcal{I}(\mathcal{K})$. Using Theorem \ref{thm:maxElm}, we know that all 
$\mathcal{N}$-Markov equivalent DMGs are in fact subgraphs of $\mathcal{N}$, 
and using that $\mathcal{K}$ is not Markov equivalent to $\mathcal{N}$ we see 
that all graphs in $[\mathcal{N}]$ must contain $e$. This 
means that when $\mathcal{N} - e \notin [\mathcal{N}]$ then $e\notin \bar{F}$ 
as 
$e$ must be present in all Markov equivalent DMGs.

Any loop should in principle be dashed when drawing a DMEG as for each 
node in 
a maximal DMG either both the 
directed and the bidirected loop is present or neither of 
them. However, we choose to not present them as dashed as if they are present 
in the maximal DMG, then at least one of them 
will be present in any Markov equivalent DMG satisfying (\ref{eq:mDGprop}), 
that 
is, for any DMG which is a marginalization of a DG. In addition we only draw 
the 
directed 
loop 
to not overload the visualizations.

\begin{center}
	\begin{figure}[t]
\resizebox{\textwidth}{!}{%
\begin{tikzpicture}[scale = .6, every node/.style={circle, inner sep = .2mm}, 
font=\larger, bend angle = 15]

\def\x{\textwidth/3}
\def\y{3.5cm}
\def\xx{.1cm}
\def\yy{-.5cm}

\node (a) {$\alpha$};
\node (b) [right of=a] {$\beta$};
\node (c) [below of=a] {$\gamma$};
\node (d) [below of=b] {$\delta$};
\node (x) [above left of=a, xshift = \xx, yshift = \yy, circle, draw, inner 
sep = 1pt, scale=0.7] {3};
\path
(a) [->]edge [bend left] (b)
(a) [<->]edge [bend right] (b)
(b) [<-]edge node {} (c)
(c) [->]edge node {} (d)
(d) [->]edge [loop right] node {} (d)
(a) [->]edge [loop left] node {} (a)
(b) [->]edge [loop right] node {} (b)
(d) [<->]edge [bend right] node {} (b);

\begin{scope}[xshift=\x]
	\node (a) {$\alpha$};
	\node (b) [right of=a] {$\beta$};
	\node (c) [below of=a] {$\gamma$};
	\node (d) [below of=b] {$\delta$};
	\node (x) [above left of=a, xshift = \xx, yshift = \yy, circle, draw, 
	inner 
	sep = 1pt, scale=0.7] {6};
	\path
	(a) [<->]edge [bend right] node {} (b)
	(b) [<-]edge node {} (c)
	(c) [->]edge node {} (d)
	(d) [->]edge [loop right] node {} (d)
	(a) [->]edge [loop left] node {} (a)
	(b) [->]edge [loop right] node {} (b)
	(d) [<->]edge [bend right] node {} (b);
\end{scope}

\begin{scope}[yshift=\y]
	\node (a) {$\alpha$};
	\node (b) [right of=a] {$\beta$};
	\node (c) [below of=a] {$\gamma$};
	\node (d) [below of=b] {$\delta$};
	\node (x) [above left of=a, xshift = \xx, yshift = \yy, circle, draw, 
	inner 
	sep = 1pt, scale=0.7] {2};
	\path
	(a) [->]edge [bend left] node {} (b)
	(a) [<->]edge [bend right] node {} (b)
	(c) [->]edge node {} (d)
	(d) [->]edge [loop right] node {} (d)
	(a) [->]edge [loop left] node {} (a)
	(b) [->]edge [loop right] node {} (b)
	(d) [->]edge [bend left] node {} (b)
	(d) [<->]edge [bend right] node {} (b);
\end{scope}

\begin{scope}[xshift=\x, yshift=\y]
	\node (a) {$\alpha$};
	\node (b) [right of=a] {$\beta$};
	\node (c) [below of=a] {$\gamma$};
	\node (d) [below of=b] {$\delta$};
	\node (x) [above left of=a, xshift = \xx, yshift = \yy, circle, draw, 
	inner 
	sep = 1pt, scale=0.7] {5};
	\path
	(a) [<->]edge [bend right] node {} (b)
	(c) [->]edge node {} (d)
	(d) [->]edge [loop right] node {} (d)
	(a) [->]edge [loop left] node {} (a)
	(b) [->]edge [loop right] node {} (b)
	(d) [->]edge [bend left] node {} (b)
	(d) [<->]edge [bend right] node {} (b);
\end{scope}

\begin{scope}[yshift=2*\y]
	\node (a) {$\alpha$};
	\node (b) [right of=a] {$\beta$};
	\node (c) [below of=a] {$\gamma$};
	\node (d) [below of=b] {$\delta$};
	\node (x) [above left of=a, xshift = \xx, yshift = \yy, circle, draw, 
	inner 
	sep = 1pt, scale=0.7] {1};
	\path
	(a) [->]edge [bend left] node {} (b)
	(a) [<->]edge [bend right] node {} (b)
	(b) [<-]edge node {} (c)
	(c) [->]edge node {} (d)
	(d) [->]edge [loop right] node {} (d)
	(a) [->]edge [loop left] node {} (a)
	(b) [->]edge [loop right] node {} (b)
	(d) [->]edge [bend left] node {} (b)
	(d) [<->]edge [bend right] node {} (b);
\end{scope}

\begin{scope}[xshift=\x, yshift=2*\y]
	\node (a) {$\alpha$};
	\node (b) [right of=a] {$\beta$};
	\node (c) [below of=a] {$\gamma$};
	\node (d) [below of=b] {$\delta$};
	\node (x) [above left of=a, xshift = \xx, yshift = \yy, circle, draw, 
	inner 
	sep = 1pt, scale=0.7] {4};
	\path
	(a) [<->]edge [bend right] node {} (b)
	(b) [<-]edge node {} (c)
	(c) [->]edge node {} (d)
	(d) [->]edge [loop right] node {} (d)
	(a) [->]edge [loop left] node {} (a)
	(b) [->]edge [loop right] node {} (b)
	(d) [->]edge [bend left] node {} (b)
	(d) [<->]edge [bend right] node {} (b);
\end{scope}

\begin{scope}[xshift=2*\x, yshift=\y]
	\node (a) {$\alpha$};
	\node (b) [right of=a] {$\beta$};
	\node (c) [below of=a] {$\gamma$};
	\node (d) [below of=b] {$\delta$};
	\node (x) [above left of=a, xshift = \xx, yshift = \yy, circle, draw, 
	inner 
	sep = 1pt, scale=0.7] {7};
	\path
	(a) [->]edge [bend left, dashed] node {} (b)
	(a) [<->]edge [bend right] node {} (b)
	(b) [<-]edge [dashed] (c)
	(c) [->]edge node {} (d)
	(d) [->]edge [loop right] node {} (d)
	(a) [->]edge [loop left] node {} (a)
	(b) [->]edge [loop right] node {} (b)
	(d) [->]edge [bend left, dashed] node {} (b)
	(d) [<->]edge [bend right] node {} (b);
\end{scope}
\end{tikzpicture}
}
\caption{The DMG \protect\circled{\emph{1}} is maximal (the bidirected loops at 
$\alpha$, $\beta$ and $\delta$ have been omitted from the visual presentation). 
The 
DMGs
  \protect\circled{\emph{1}}\!\!\ --\!\ \protect\circled{\emph{6}} are the 
  six 
  elements 
  of its Markov 
equi\-va\-lence class (when ignoring Markov equivalent removal of loops). The 
graph 
\protect\circled{\emph{7}} is the corresponding 
DMEG. In a DMEG, every 
solid edge is in every graph in the equivalence class, every absent 
edge is not in any graph, and every dashed edge is in some, but not in 
others. Note that every DMG in the above equivalence class contains the edge 
$\gamma 
\rightarrow \beta$ or the edge $\delta \rightarrow \beta$ even though both are 
dashed in the DMEG. This example shows that not every equivalence class 
contains 
a 
least element.}
\label{fig:DMEG}
	\end{figure}
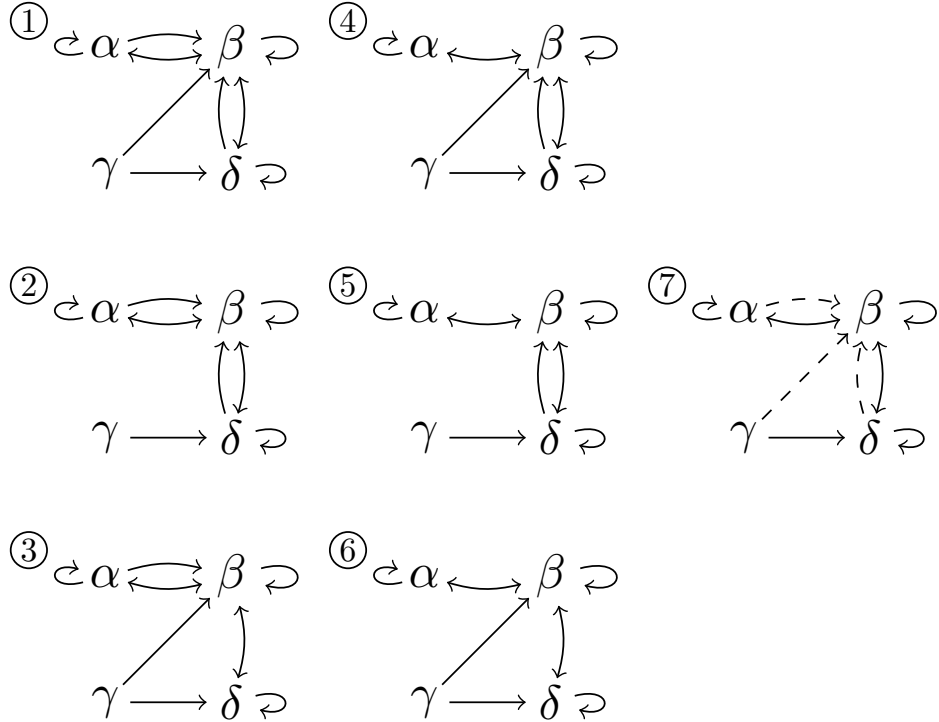
\end{center}

\subsection{Constructing a directed mixed equivalence graph}
\label{sec:dmeg}

When constructing a DMEG from $\mathcal{N}$ it suffices 
to consider 
the graphs $\mathcal{N} - e$ for each $e\in E$ and 
determine if 
they are Markov equivalent to $\mathcal{N}$ or not. A brute-force approach 
to 
doing so is to simply check all separation statements in both graphs. 
However, one can make a considerably more 
efficient algorithm.

\begin{prop}
	Assume $\alpha 
	\overset{e}{\rightarrow}_\mathcal{N} \beta$. It holds that $\mathcal{N} - e 
	\in 
	[\mathcal{N}]$ if and only if $\alpha \in 
	u(\beta, \mathcal{I}(\mathcal{N} - e))$.
	\label{prop:dirEdgeSepInN}
\end{prop}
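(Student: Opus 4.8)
The plan is to reformulate the statement in terms of potential parents and then appeal to Lemma~\ref{lem:AddPoPaME}. First I would note that the single edge $\alpha\to_{\mathcal N}\beta$ is itself an inducing path from $\alpha$ to $\beta$ (nontrivial, a head at $\beta$, no internal nodes hence no noncolliders, and both endpoints are ancestors of themselves), so Corollary~\ref{cor:ipNonSep}---equivalently condition (p1) via Proposition~\ref{prop:PaIsPoPa}---gives $\alpha\in u(\beta,\mathcal I(\mathcal N))$. Hence if $\mathcal N-e\in[\mathcal N]$ then $\mathcal I(\mathcal N-e)=\mathcal I(\mathcal N)$ and $\alpha\in u(\beta,\mathcal I(\mathcal N-e))$, which is the ``only if'' direction. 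For the ``if'' direction it suffices to prove that $\alpha$ is a potential parent of $\beta$ in $\mathcal I(\mathcal N-e)$: adding $\alpha\to\beta$ back to $\mathcal N-e$ recovers $\mathcal N$ (since $e$ was the only edge removed), so Lemma~\ref{lem:AddPoPaME} yields $\mathcal I(\mathcal N-e)=\mathcal I(\mathcal N)$, i.e. $\mathcal N-e\in[\mathcal N]$.

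So the substance is to verify, assuming only $\alpha\in u(\beta,\mathcal I(\mathcal N-e))$, that conditions (gp1)--(gp4) of Proposition~\ref{def:graphicalPotPa} hold for the graph $\mathcal N-e$. Condition (gp1) is exactly the hypothesis. For (gp2)--(gp4) I would use that they already hold for $\mathcal N$ (again because $\alpha\to_{\mathcal N}\beta$, via Propositions~\ref{prop:PaIsPoPa} and~\ref{def:graphicalPotPa}) and transfer them to $\mathcal N-e$. Each of (gp2)--(gp4) reduces to a statement of the form: if $x\notin C$ and there is a $\mu$-connecting walk from $x$ to $y$ given $C$ in $\mathcal N$, then there is one in $\mathcal N-e$. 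I would first invoke Proposition~\ref{prop:muConnAllCollInC} to replace such a walk by one all of whose colliders lie in $C$; for such a walk the requirement ``every collider is in $An(C)$'' holds in any graph, so the discrepancy between $An_{\mathcal N}(C)$ and $An_{\mathcal N-e}(C)$ becomes irrelevant. A walk not traversing $e$ is then already a $\mu$-connecting walk in $\mathcal N-e$; a walk traversing $e=\alpha\to\beta$ is handled by splicing out each occurrence of $\alpha\to\beta$ and replacing it by a walk from $\alpha$ to $\beta$ in $\mathcal N-e$, which exists for any $C$ with $\alpha\notin C$ since $\alpha\in u(\beta,\mathcal I(\mathcal N-e))$. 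The replacement automatically ends with a head at $\beta$; the delicate point is that it must start with a \emph{tail} at $\alpha$, so that the spliced walk does not acquire an illicit collider at $\alpha$ (note $\alpha\notin C$). To obtain such a replacement I would apply Proposition~\ref{prop:noIPsep} in $\mathcal N-e$ to produce an inducing path $\nu$ from $\alpha$ to $\beta$: if $\nu$ is directed, its $\nu$-induced open walk is endpoint-identical to $\nu$ by Proposition~\ref{prop:IPepiConnPath} and therefore starts with a tail at $\alpha$ (and, in fact, Proposition~\ref{prop:asymIP} then settles the conclusion directly).

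The main obstacle is the remaining case, where every inducing path from $\alpha$ to $\beta$ in $\mathcal N-e$ is bidirected or unidirected-but-not-directed, so the guaranteed connecting walk may start with a head at $\alpha$. Here the maximality of $\mathcal N$ is essential: applying Proposition~\ref{prop:biIP} to a bidirected inducing path (or to the bidirected ``tail'' $\gamma_1\leftrightarrow\cdots\leftrightarrow\beta$ of a unidirected one) shows that adding the corresponding bidirected edge to $\mathcal N$ does not change $\mathcal I(\mathcal N)$, so by maximality that edge already lies in $\mathcal N$, hence in $\mathcal N-e$ (it is not $e$). Iterating this, and using Proposition~\ref{prop:asymIP} for the directed-edge step, I would upgrade the configuration in $\mathcal N-e$ until either a directed inducing path from $\alpha$ to $\beta$ is available there (reducing to the easy case above) or the edge $\alpha\to\beta$ is seen to be redundant directly. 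I expect the bookkeeping in this upgrading step---tracking which of the $\gamma_i$ remain ancestors of $\beta$ after inserting the new bidirected edges, and confirming each insertion leaves $\mathcal I$ unchanged so that maximality applies---to be the hardest part of the argument.
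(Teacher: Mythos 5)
Your overall architecture matches the paper's in its essentials: the ``only if'' direction via the inducing path $\alpha\rightarrow\beta$ in $\mathcal{N}$, and the ``if'' direction by extracting an inducing path from $\alpha$ to $\beta$ in $\mathcal{N}-e$ via Proposition~\ref{prop:noIPsep} and splitting on its type, with the directed case settled by Proposition~\ref{prop:asymIP} exactly as in the paper. (The detour through potential parents and Lemma~\ref{lem:AddPoPaME} is redundant, though: the ``reduction'' you state for (gp2)--(gp4) --- every $\mu$-connecting walk in $\mathcal{N}$ has a counterpart in $\mathcal{N}-e$ --- is already the inclusion $\I(\mathcal{N}-e)\subseteq\I(\mathcal{N})$, which together with the automatic reverse inclusion is the entire claim; the real content of your proof is therefore the splicing argument, and the potential-parent wrapper adds nothing.)

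The gap is in the bidirected case, and it is not just bookkeeping. Your splicing scheme requires, for each $C$ with $\alpha\notin C$, an open walk from $\alpha$ to $\beta$ in $\mathcal{N}-e$ with a \emph{tail} at $\alpha$, because an interior occurrence of $e$ traversed as $\alpha\rightarrow\beta$ makes $\alpha$ a noncollider there, and $\alpha$ need not lie in $An(C)$. When every inducing path from $\alpha$ to $\beta$ in $\mathcal{N}-e$ is bidirected, no such walk need exist: maximality together with Proposition~\ref{prop:biIP} does put $\alpha\leftrightarrow\beta$ into $\mathcal{N}$ and hence into $\mathcal{N}-e$, but substituting $\alpha\leftrightarrow\beta$ for $\alpha\rightarrow\beta$ flips the mark at $\alpha$ from tail to head, turning that noncollider into a collider and potentially blocking the walk. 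The paper's resolution (Lemma~\ref{lem:maxRemove}) is a different surgery: maximality forces every edge $\gamma\starrightarrow\alpha$ to be accompanied by $\gamma\starrightarrow\beta$, so a configuration $\gamma\starrightarrow\alpha\rightarrow\beta$ is rerouted \emph{around} $\alpha$ as $\gamma\starrightarrow\beta$, rather than by replacing the edge $\alpha\rightarrow\beta$ in isolation. That idea cannot be recovered within a framework that only replaces occurrences of $e$ by walks from $\alpha$ to $\beta$, so your ``the edge is seen to be redundant directly'' conceals a genuinely missing lemma. Your sketch for the unidirected-but-not-directed case (use maximality to force the intermediate nodes into $An_{\mathcal{N}-e}(\beta)$, yielding a directed inducing path in $\mathcal{N}-e$) is essentially the paper's Lemma~\ref{lem:maxNondirToDir} and could be completed along the lines you indicate, but as written both hard cases remain unproved.
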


\begin{prop}
	Assume $\alpha \overset{e}{\leftrightarrow}_\mathcal{N} \beta$. Then 
	$\mathcal{N} - e 
	\in 
	[\mathcal{N}]$ if and only if $\alpha \in u(\beta, \I(\mathcal{N} - e))$ 
	and $\beta \in u(\alpha, \I(\mathcal{N} - e))$.
	\label{prop:bidirEdgeSepInN}
\end{prop}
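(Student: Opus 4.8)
The plan is to mirror the proof of Proposition~\ref{prop:dirEdgeSepInN}, with bidirected edges and potential siblings replacing directed edges and potential parents. Write $e=\alpha\leftrightarrow\beta$ and note first that deleting a bidirected edge removes no directed paths, so $An_{\mathcal{N}-e}=An_{\mathcal{N}}$; hence a walk using no occurrence of $e$ is $\mu$-connecting given $C$ in $\mathcal{N}-e$ if and only if it is so in $\mathcal{N}$, and by Proposition~\ref{prop:muConnPath} one may always pass to routes. The ``only if'' direction is immediate: $\mathcal{N}-e\in[\mathcal{N}]$ gives $\I(\mathcal{N}-e)=\I(\mathcal{N})$, and since $\alpha\leftrightarrow\beta$ lies in $\mathcal{N}$, Proposition~\ref{prop:SiIsPoSi} together with condition (s1) of Definition~\ref{def:potSib} yields $\alpha\in u(\beta,\I(\mathcal{N}))=u(\beta,\I(\mathcal{N}-e))$ and $\beta\in u(\alpha,\I(\mathcal{N}))=u(\alpha,\I(\mathcal{N}-e))$.

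For the ``if'' direction, assume $\alpha\in u(\beta,\I(\mathcal{N}-e))$ and $\beta\in u(\alpha,\I(\mathcal{N}-e))$. It suffices to show that $\alpha$ and $\beta$ are potential siblings in $\I(\mathcal{N}-e)$: then Lemma~\ref{lem:AddPoSiME} (applied to $\mathcal{N}-e$, whose $\alpha\leftrightarrow\beta$-augmentation is exactly $\mathcal{N}$) gives $\I(\mathcal{N})=\I((\mathcal{N}-e)+e)=\I(\mathcal{N}-e)$, i.e.\ $\mathcal{N}-e\in[\mathcal{N}]$; alternatively one could feed $\mathcal{N}-e\subseteq\mathcal{N}\subseteq\mathcal{N}(\I(\mathcal{N}-e))$ into the chain statement of Theorem~\ref{thm:maxElm}. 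To obtain this I would verify conditions (gs1)--(gs3) of Proposition~\ref{def:graphicalPotSib} for $\I(\mathcal{N}-e)$. Condition (gs1) is the hypothesis. Since $\alpha\leftrightarrow\beta$ is in $\mathcal{N}$, Propositions~\ref{prop:SiIsPoSi} and~\ref{def:graphicalPotSib} give (gs2) and (gs3) \emph{in $\mathcal{N}$}, and the real task is to transfer them to $\mathcal{N}-e$.

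I would treat (gs2), the argument for (gs3) being symmetric with the roles of $\alpha,\beta$ and of the two hypotheses interchanged. Given $\gamma\in V$, $C\subseteq V$ with $\beta\in C$ and a $\mu$-connecting walk from $\gamma$ to $\beta$ given $C$ in $\mathcal{N}-e$, it is also $\mu$-connecting in $\mathcal{N}$, so (gs2) in $\mathcal{N}$ produces a $\mu$-connecting route $\omega$ from $\gamma$ to $\alpha$ given $C$ in $\mathcal{N}$. If $\omega$ avoids $e$ we are done; otherwise $\omega$ traverses $e$, and since $\omega$ is a route and any use of $e$ visits $\beta$, which (as $\beta\neq\alpha$) appears at most once, $e$ occurs on $\omega$ exactly once. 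Because $\beta\in C$ and $\omega$ is open, the occurrence of $\beta$ incident to $e$ is a collider, so the prefix of $\omega$ up to $\beta$ and the suffix of $\omega$ from $\beta$ are both open and $e$-free, with a head at $\beta$ at the split point and (for the suffix) a head at $\alpha$ at its end. I would then reconnect to $\alpha$ by the $\mu$-connecting walk between $\beta$ and $\alpha$ in $\mathcal{N}-e$ supplied by $\beta\in u(\alpha,\I(\mathcal{N}-e))$, with conditioning set $C\setminus\{\beta\}$. Proposition~\ref{prop:noIPsep} also guarantees an inducing path between $\alpha$ and $\beta$ in $\mathcal{N}-e$; if it is bidirected, Proposition~\ref{prop:biIP} already gives $\I(\mathcal{N})=\I(\mathcal{N}-e)$, and if some reconnecting walk has a head at $\beta$, the splice at $\beta$ keeps $\beta$ a collider and is $\mu$-connecting given $C$, finishing (gs2).

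The hard part will be the remaining case, where every inducing path between $\alpha$ and $\beta$ in $\mathcal{N}-e$ is merely unidirected, so the walk obtained from Proposition~\ref{prop:IPepiConnPath} leaves $\beta$ with a tail and the naive splice at $\beta$ would create a noncollider in $C$. Dealing with this requires a more careful rerouting: using that $\alpha$ occurs at most twice on the route $\omega$, that $\beta$ is a collider on $\omega$, and that the suffix of $\omega$ from $\beta$ already reaches $\alpha$ with a head, one splits further back (at a suitable occurrence of $\alpha$, or at $\beta$) and glues in the unidirected-inducing-path walk so that all three conditions of Definition~\ref{def:muConnSeq} --- colliders ancestral to $C$, no noncollider in $C$, a head at the endpoint --- remain in force. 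The bidirected-loop case $\alpha=\beta$ should be handled separately and is comparatively easy. Carrying out this case analysis, and checking $\mu$-connectedness of the resulting $e$-free walk, is where the genuine work lies.
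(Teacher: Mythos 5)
Your easy direction and your reduction to ``show $\alpha,\beta$ are potential siblings in $\I(\mathcal{N}-e)$'' are fine, as is the observation that the only obstruction to transferring (gs2)--(gs3) from $\mathcal{N}$ to $\mathcal{N}-e$ is an occurrence of $e$ on the connecting route, which you would like to replace by the open walk induced (via Proposition \ref{prop:IPepiConnPath}) by an inducing path between $\alpha$ and $\beta$ in $\mathcal{N}-e$. But the case you defer as ``where the genuine work lies'' --- every inducing path between $\alpha$ and $\beta$ in $\mathcal{N}-e$ being unidirected, so the replacement walk leaves with a tail and the splice at the collider $\beta\in C$ fails --- is not a technicality that yields to ``a more careful rerouting'' of the kind you gesture at. Take $\mathcal{N}$ with edges $\alpha\rightarrow\beta$, $\beta\rightarrow\alpha$, $\alpha\leftrightarrow\beta$ (plus loops): in $\mathcal{N}-e$ the only inducing paths in either direction are the single directed edges, every open walk from $\beta$ to $\alpha$ starts with a tail at $\beta$, and no amount of splitting at occurrences of $\alpha$ or $\beta$ along the original route fixes the noncollider-in-$C$ problem by itself. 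What is needed is structural information coming from the maximality of $\mathcal{N}$, and your sketch never invokes it.

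The paper's proof takes a different and shorter route precisely to exploit that information. From $\alpha\in u(\beta,\cdot)$ and $\beta\in u(\alpha,\cdot)$ it extracts inducing paths $\nu_1,\nu_2$ in both directions in $\mathcal{N}-e$ (contrapositive of Proposition \ref{prop:noIPsep}) and then argues entirely at the level of inducing paths: if either is bidirected, Proposition \ref{prop:biIP} gives $\I(\mathcal{N})=\I(\mathcal{N}-e)$ at once; if both are single directed edges, the separate structural Lemma \ref{lem:maxRemove} (which shows how to reroute colliders at $\alpha$ through $\beta$ using the presence of all three edges in the maximal graph) gives the result; and if one has length $\ge 2$, maximality of $\mathcal{N}$ forces extra bidirected adjacencies (e.g.\ $\gamma_1\leftrightarrow\phi_1$) that let the two unidirected inducing paths be concatenated and trimmed into a genuinely \emph{bidirected} inducing path in $\mathcal{N}-e$, reducing again to Proposition \ref{prop:biIP}. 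Your proposal is missing this entire mechanism, and since it covers exactly the case you flag as hard, the proof is incomplete as it stands.
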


We can now outline a two-step algorithm for constructing the DMEG from an 
arbitrary 
DMG, $\G$. We first construct the maximal Markov equivalent graph, 
$\mathcal{N}$. We know 
from Theorem 
\ref{thm:maxElm} that one can simply check if each pair of nodes are potential 
siblings/parents in the independence model induced by $\G$ and construct the 
maximal 
Markov equivalent graph directly. This may, however, not be computationally 
efficient.

The above propositions show that given the maximal DMG, one can efficiently 
construct the DMEG by 
evaluating separability once for each directed edge and 
twice for each bidirected edge. 
Using Proposition \ref{prop:noIPsep} one can determine separability by testing 
a 
single separation statement, and this 
means that starting from $\mathcal{N}$,  one can construct the 
corresponding DMEG 
in a way such that the number of 
separation 
statements to test scales linearly in the 
number of edges in $\mathcal{N}$.

		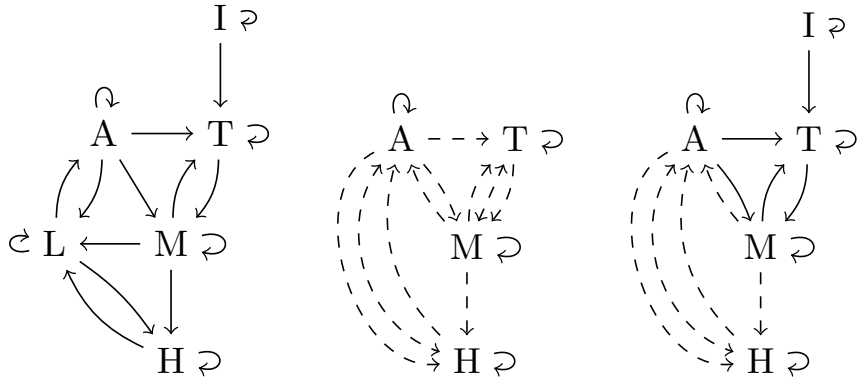
\begin{figure}
			\resizebox{0.3\textwidth}{!}{
				\begin{tikzpicture}[every node/.style={scale = 1}, every 
				loop/.style={min 
					distance=1mm,in=60,out=90,looseness=10}] 
				\node (A) {A};
				\node (T) [right of=A] {T};
				\node (M) [below right = .5cm and .075cm of A] {M};
				\node (H) [below of=M] {H};
				\node (L) [left of=M] {L};
				\node (I) [above of=T] {I};
				\path
				(A) [->]edge node {} (T)
				(T) [->]edge [bend left=20] node {} (M)
				(L) [->]edge [bend left = 20] node {} (A)
				(A) [->]edge [bend left = 20] (L)
				(A) [->]edge node {} (M)
				(M) [->]edge node {} (L)
				(M) [->]edge node {} (H)
				(L) [->]edge [bend left = 10] node {} (H)
				(H) [->]edge [bend left = 20] node {} (L)
				(I) [->]edge node {} (T)
				(M) [->]edge [bend left=20] node {} (T)
				(A) [->] edge [loop above, looseness = 5] (A)
				(I) [->] edge [loop right, looseness = 5] (I)
				(T) [->] edge [loop right, looseness = 5] (T)
				(L) [->] edge [loop left, looseness = 5] (L)
				(M) [->] edge [loop right, looseness = 5] (M)
				(H) [->] edge [loop right, looseness = 5] (H)
				;
				\end{tikzpicture}}
			\resizebox{0.3\textwidth}{!}{
				\begin{tikzpicture}[every node/.style={scale = 1}, every 
				loop/.style={min 
					distance=1mm,in=60,out=90,looseness=10}] 
				\node (A) {A};
				\node (T) [right of=A] {T};
				\node (M) [below right = .5cm and .075cm of A] {M};
				\node (H) [below of=M] {H};
				\node (L) [left of=M, fill opacity = 0] {L};
				\node (I) [above of=T, fill opacity = 0] {I};
				\path
				(A) [->]edge [dashed] node {} (T)
				(T) [->]edge [bend left=20, dashed] node {} (M)
				(L) [->]edge [bend left = 20, draw opacity = 0, dashed] node {} 
				(A)
				(A) [->]edge [bend left = 20, draw opacity = 0, dashed] (L)
				(A) [->]edge [bend left = 10, dashed] node {} (M)
				(M) [->]edge [bend left = 10, dashed] node {} (A)
				(A) [->]edge [bend right = 80, dashed] node {} (H)
				(H) [->]edge [bend left = 30, dashed] node {} (A)
				(A) [<->]edge [bend right = 55, dashed] node {} (H)
				(M) [->]edge [draw opacity = 0, dashed] node {} (L)
				(M) [->]edge [dashed] node {} (H)
				(M) [<->]edge [dashed] node {} (T)
				(L) [->]edge [bend left = 10, draw opacity = 0, dashed] node {} 
				(H)
				(H) [->]edge [bend left = 10, draw opacity = 0, dashed] node {} 
				(L)
				(I) [->]edge [draw opacity = 0, dashed] node {} (T)
				(M) [->]edge [bend left=20, dashed] node {} (T)
				(A) [->] edge [loop above, looseness = 5] (A)
				(I) [->] edge [loop right, looseness = 5, draw opacity = 0] (I)
				(T) [->] edge [loop right, looseness = 5] (T)
				(L) [->] edge [loop left, looseness = 5, draw opacity = 0] (L)
				(M) [->] edge [loop right, looseness = 5] (M)
				(H) [->] edge [loop right, looseness = 5] (H)
				;
				\end{tikzpicture}}
			\resizebox{0.3\textwidth}{!}{
				\begin{tikzpicture}[every node/.style={scale = 1}, every 
				loop/.style={min 
					distance=1mm,in=60,out=90,looseness=10}] 
					\node (A) {A};
					\node (T) [right of=A] {T};
					\node (M) [below right = .5cm and .075cm of A] {M};
					\node (H) [below of=M] {H};
					\node (L) [left of=M, fill opacity = 0] {L};
					\node (I) [above of=T] {I};
					\path
					(A) [->]edge node {} (T)
					(T) [->]edge [bend left=20] node {} (M)
					(L) [->]edge [bend left = 20, draw opacity = 0, dashed] 
					node {} 
					(A)
					(A) [->]edge [bend left = 20, draw opacity = 0, dashed] (L)
					(A) [->]edge [bend left = 10] node {} (M)
					(M) [->]edge [bend left = 10, dashed] node {} (A)
					(A) [->]edge [bend right = 80, dashed] node {} (H)
					(H) [->]edge [bend left = 30, dashed] node {} (A)
					(A) [<->]edge [bend right = 55, dashed] node {} (H)
					(M) [->]edge [draw opacity = 0, dashed] node {} (L)
					(M) [->]edge [dashed] node {} (H)
					(L) [->]edge [bend left = 10, draw opacity = 0, dashed] 
					node {} 
					(H)
					(H) [->]edge [bend left = 10, draw opacity = 0, dashed] 
					node {} 
					(L)
					(I) [->]edge  node {} (T)
					(M) [->]edge [bend left=20] node {} (T)
					(A) [->] edge [loop above, looseness = 5] (A)
					(I) [->] edge [loop right, looseness = 5] 
					(I)
					(T) [->] edge [loop right, looseness = 5] (T)
					(L) [->] edge [loop left, looseness = 5, draw opacity = 0] 
					(L)
					(M) [->] edge [loop right, looseness = 5] (M)
					(H) [->] edge [loop right, looseness = 5] (H)
					;
				\end{tikzpicture}}
			\caption{Left: Local independence graph of Example \ref{exmp:gw}. 
			Middle: 
			DMEG for the margina\-li\-zation over $L$ and
                        $I$. Right: DMEG for the margina\-li\-zation over $L$. 
			We have omitted the bidirected loops from the DMEGs and presented 
			the directed loops as solid.}
			\label{fig:gwDMG}
		\end{figure}

\begin{exmp}[Gateway drugs, continued]
	We return to the model in Example \ref{exmp:gw} to consider what happens 
	when it is only partially observed and to give an interpretation 
	of the corresponding local independence model. The local
        independence graph is assumed to be as depicted on Figure
        \ref{fig:gwDMG}, left. 
	
	Consider first the situation where $L$ and $I$ are
        unobserved. In this case, under the faithfulness assumption of the full 
        model (Definition
       \ref{def:faith}) we can construct
        the DMEG, which is shown in the center panel of Figure
        \ref{fig:gwDMG}, from the local independence model. The DMEG
        represents the Markov equivalence class which we can infer from
        the marginal local independence model ($L$ and $I$ are
        unobserved). Theoretically, the inference requires an oracle to
        provide us with local independence statements, which will in
        practice have to be approximated by statistical tests. What is
        noteworthy is that the DMEG can be inferred from the
        distribution of the observed variables only, and we do not
        need to know the local independences of the full model. 

        If we ignore which edges are dashed and
        which are not, the graph simply represents the local
        independence model of the marginal system as the maximal
        element in the Markov equivalence class. The
        dashed edges give us additional -- and in some sense local --
        information. As an example, the directed edge from $A$ to $H$ is
        dashed and we cannot know if there exists a conditioning set
        that would render $H$ locally independent of $A$ in the full
        system. On the other hand, the directed edge from 
        $T$ to $H$ is
        absent, and we can conclude that tobacco use is not directly affecting
        hard drug use.
	
	Consider instead the situation where $I$ is also observed. $I$ serves as an 
	analogue to an instrumental variable (see e.g. \cite{pearl2009} for an 
	introduction to instrumental variables). The 
	inclusion of this variable identifies some of the structure by removing 
	some dashed edges and making others non-dashed.		
\end{exmp}

\section{Discussion and conclusion}
\label{sec:conc}

In this paper we introduced a class of graphs to represent local 
independence structures of partially observed multivariate stochastic 
processes. Previous work 
based on 
directed graphs, that allows for cycles and use the asymmetric 
$\delta$-separation criterion,
was extended to mixed directed graphs to account for latent processes and we 
introduced $\mu$-separation in mixed directed
graphs.

An important task is the characterization of equivalence classes of graphs and 
this has been studied for e.g. MAGs \cite{ali2009, zhao2005}. In the case of 
MAGs, a key result 
is that every element in a Markov equivalence class has the same {\it 
	skeleton}, i.e. the same adjacencies \cite{ali2009}. As shown by 
Propositions 
\ref{prop:biIP} 
and \ref{prop:asymIP} this is not the case for DMGs, and Example 
\ref{exmp:nonMax} shows that one cannot necessarily within a Markov equivalence 
class find an element such that two nodes are inseparable if and only if 
they are adjacent.

We proved instead a central maximality property 
which allowed us to propose the use of DMEGs to represent a Markov 
equivalence class of DMGs in a concise 
way. Given a maximal DMG, 
we furthermore argued that one can efficiently find 
the DMEG. Similar results are known for chain graphs, as one can also 
in a certain sense find a unique, largest graph represen\-ting a Markov 
equivalence 
class \cite{frydenberg1990}, though this graph is not a supergraph of all 
Markov equivalent graphs 
as 
in the case of DMGs. Volf and Studen{\'y} \cite{volf1999} suggested to use 
this largest graph as 
a unique representative of the Markov equivalence class, and they provided an 
algorithm to construct it. 

We emphasize that the characterization given of the maximal element of a Markov 
equivalence class of DMGs
is constructive in the sense that it straightforwardly defines an algorithm 
for 
learning a maximal DMG from a local independence oracle. This learning 
algorithm may not be computationally efficient or even
feasible for large graphs, and it is ongoing research to develop 
efficient learning algorithms and to develop the practical
implementations of the tools needed for replacing the oracle by
statistical tests.

\section*{Acknowledgments}

This work was supported by a research grant from VILLUM FONDEN (13358). The 
authors are grateful to Steffen Lauritzen for his helpful comments and 
suggestions. We also thank two referees and an area editor whose comments have 
helped improve this manuscript.


\pagebreak
\setcounter{section}{0}
\renewcommand\thesection{\Alph{section}}

\title{Markov equivalence of marginalized local 
	independence graphs}
\begin{center}
	Supplementary material
\end{center}

\vspace*{1cm}

			In this supplementary material we discuss
			relations between $\mu$-separation and other asymmetric 
			notions of graphical separation. We also compare our 
			proposed definition of local independence to previous 
			definitions to argue that ours is in fact a 
			generalization. We furthermore relate $\mu$-separation
			to $m$-separation. We provide, in particular,
			a detailed discussion of the local
			independence model for discrete-time stochastic
			processes (time series), and we show how to
			verify $\mu$-separation via separation in an
			auxiliary undirected graph. We also
			discuss the existence of the compensators that
			are used in the definition of local
			independence for continuous-time stochastic
			process models. This supplementary material also 
			contains proofs of the results of the main paper.

			\section{Relation to other asymmetric notions of graphical 
				separation}
			\label{sec:otherSeps}
			
			In this section we relate $\mu$-separation to $\delta$-separation as
			introduced previously in the literature for directed graphs.

			\begin{defn}[Bereaved graph]
				Let $\mathcal{G} = (V,E_d)$ be a DG, and let $B\subseteq V$. 
				The 
				{\it $B$-bereaved graph},
				$\mathcal{G}^{B}$, is constructed from $\mathcal{G}$ by
				removing every directed edge with a tail at a node in $B$ 
				except loops. More
				precisely, $\mathcal{G}^{B} = (V, \bar{E}_d^B)$,
				where $\bar{E}_d^B = E_d \setminus \left(\bigcup_{\beta\in B} 
				\{(
				\beta, \delta
				) \mid \delta \neq \beta \}\right)$.
				\label{def:bereavedG}
			\end{defn}
			
			Didelez \cite{didelez2000} considered a DG, and for disjoint 
			sets 
			$A,B,C\subseteq V$ said that $B$ is 
			separated 
			from $A$ by $C$ if there is no $\mu$-connecting walk in 
			$\mathcal{G}^B$, or 
			equivalently, no $\mu$-connecting path. This 
			is called 
			$\delta$-separation. Note that the condition in Definitions 
			\ref{def:muConnSeq} 
			and \ref{def:muSep} that 
			a 
			connecting walk be nontrivial makes no difference now due to $A$ 
			and $B$ being 
			disjoint. The condition that a $\mu$-connecting walk ends with a 
			head at 
			$\beta\in B$ is also obsolete as we are evaluating separation in 
			the bereaved 
			graph $\G^B$. Didelez \cite{didelez2000} always assumed that a 
			process depended 
			on its own past, and thus did not visualize loops in the DGs as a 
			loop would 
			always be present at every node.
			
			Meek \cite{meek2014} generalized $\delta$-separation to 
			$\delta^*$-separation 
			in a 
			DG (allowing for loops)
			by considering only nontrivial $\mu$-connecting walks in 
			$\mathcal{G}^B$ for sets $A,B,C\subseteq V$ such that $A\cap C = 
			\emptyset$ 
			with the motivation that a 
			node can be separated from itself using this notion of separation. 
			However, 
			if we consider the
			graph $\alpha \rightarrow \beta$, and sets
			$A = \{\alpha\}$, $B = \{\alpha, \beta\}$, $C = \emptyset$, then 
			using 
			$\delta^*$-separation, $B$ is separated from $A$ given $C$, which 
			runs 
			counter to an intuitive understanding of separation. More 
			importantly,
			$\delta^*$-separation in the local independence graph will not
			generally imply local independence. 
			
			To establish an exact relationship between $\delta$- and 
			$\mu$-separations 
			and argue 
			that we are indeed proposing a generalization of the former, assume 
			that $\G$ 
			is 
			a DG and that $A,B,C 
			\subseteq V$ are disjoint. We will argue that
			
			\begin{equation}
			\musepG{A}{B}{C\cup B}{\G} \Leftrightarrow 
			\deltasepG{A}{B}{C}{\G}.
			\label{eq:deltaMuEqui}
			\end{equation}
			
			\noindent To see that this is the case, consider first a 
			$\delta$-connecting walk from $\alpha \in A$ to $\beta \in B$ given 
			$C$ in 
			$\mathcal{G}^B$, $\omega$. The subwalk from $\alpha$ to the first 
			node on 
			$\omega$ which is in $B$ 
			is also present and $\mu$-connecting given $C \cup B$ in 
			$\mathcal{G}$. On the other hand, assume that there 
			exists a $\mu$-connecting sequence, $\omega$, in $\mathcal{G}$. We 
			know that 
			$A\cap B = \emptyset$, and because $B$ is a 
			subset of 
			the conditioning set on the left hand side in 
			(\ref{eq:deltaMuEqui}), we 
			must have that the first time the path enters $B$, 
			it has a head at the node in $B$, and this implies that a subwalk 
			of $\omega$  
			is $\delta$-connecting, that is, present and connecting in $\G^B$. 
			In Section 
			\ref{sec:examp} we will discuss why $B$ 
			is included in the conditioning set on the left side of 
			(\ref{eq:deltaMuEqui}).

			\section{Markov properties}
			\label{sec:examp}
			
			The equivalence of pairwise and global Markov properties is pivotal 
			in much of 
			graphical modeling. In this section, we will show how our proposed 
			graphical 
			framework fits with known results on Markov properties in the 
			case of point processes and argue that our graphical framework is a 
			generalization 
			of that of 
			Didelez \cite{Didelez2008} to allow for non-disjoint sets and 
			unobserved 
			processes.
			
			\begin{defn}[The pairwise Markov property]
				Let $\I$ be 
				an independence model over $V$. We say that 
				$\I$ satisfies the {\it pairwise Markov property} with respect 
				to 
				the DG $\mathcal{D}$ if 
				for all $\alpha, 
				\beta\in V$,
				
				$$\alpha \not\rightarrow_\mathcal{D} \beta \Rightarrow 
				\ind{\alpha}{\beta}{V\setminus \{\alpha\}} \in \I.$$
				\label{def:pairwiseMarkov}
			\end{defn}

			\begin{defn}[The global Markov property]
				Let $A,B,C \subseteq V$. Let $\I$ be an 
				independence model over $V$. We 
				say that $\I$ satisfies the {\it global Markov property} with 
				respect 
				to the DMG 
				$\G$ if 
				$\I(\G) \subseteq \I$, i.e., if
				
				$$\musepG{A}{B}{C}{\G} \Rightarrow \ind{A}{B}{C} \in \I.$$
				\label{def:globalMarkov}
			\end{defn} 
			
			Didelez \cite{Didelez2008} only considered disjoint sets and gave a 
			slighty 
			different 
			definition of local independence. For disjoint sets, Didelez 
			\cite{Didelez2008} 
			defined
			that $B$ is locally independent of $A$ given $C$ if 
			
			$$
			A \not\rightarrow B \mid C \cup B,
			$$
			
			\noindent and we will make the relation between the two definitions 
			precise in 
			this section. Consider sets $\mathcal{S},\mathcal{S}_d \subseteq 
			\mathcal{P}(V) \times \mathcal{P}(V) \times \mathcal{P}(V)$, 
			
			\begin{align*}
			\mathcal{S}_d & = 
			\{(A, B, C) \mid A,B,C \text{ disjoint}, A,B \text{ non-empty} \}\\
			\mathcal{S} & = 
			\{(A, B, C) \mid B\subseteq C,\  A,C \text{ disjoint}, A,B \text{ 
				non-empty} \} 
			\end{align*}
			
			\noindent and the bijection $s: \mathcal{S}_d \rightarrow 
			\mathcal{S}$, 
			$s((A,B,C)) = 
			(A,B, 
			C\cup B)$. We will in this section let $\I$ denote a subset of
			$\mathcal{S}$ and let $\I^d$ denote a subset of
			$\mathcal{S}_d$. In Section \ref{sec:otherSeps} we argued that for 
			any 
			directed graph $\G$ and $(A,B,C) \in \mathcal{S}_d$,
			
			\begin{align*}
			\deltasepG{A}{B}{C}{\G} & \Leftrightarrow \musepG{A}{B}{C\cup B}{\G}
			\end{align*}
			
			\noindent and therefore
			
			$$
			\{ (A,B,C) \in \mathcal{S}_d : \deltasepG{A}{B}{C}{\G}  \} = s^{-1} 
			\Big( \{ 
			(A,B,C) 
			\in \mathcal{S} : \musepG{A}{B}{C}{\G}  \} \Big).
			$$
			
			\noindent For any local independence model defined by Didelez's 
			definition, 
			$\I^d$, and 
			any local independence model defined by Definition 
			\ref{def:localIndep}, $\I$, it holds that
			
			\begin{align*}
			\langle A,B\mid C\rangle \in \I^d & \Leftrightarrow A\not 
			\rightarrow B\mid 
			C\cup B \\ & \Leftrightarrow \langle A,B\mid C\cup B\rangle 
			\in \I
			\end{align*}
			
			\noindent so $
			\I^d = s^{-1}(\I)
			$. Hence, there is a bijection between the two sets, and graphical 
			and 
			probabilistic independence models are preserved under the 
			bijection. This means 
			that we have equivalence of Markov properties between 
			the two formulations. Thus, restricting our framework to 
			$\mathcal{S}$, we 
			get the equivalence of pairwise and global 
			Markov property directly from the proof by Didelez in the case of 
			point process 
			models, and we see that our seemingly dif\-fe\-rent definitions of 
			local 
			independence and graphical separation indeed give an extension of 
			earlier 
			work.
			
			One can show that for two DMGs $\G_1$, $\G_2$, that both have all 
			directed 
			and bidirected loops it holds that
			
			$$\I(\G_1) \cap \mathcal{S} = \I(\G_2) \cap \mathcal{S} 
			\Leftrightarrow \I(\G_1) = \I(\G_2).$$
			
			Let $\mathbb{G}$ denote the class of DMGs such that 
			all directed 
			and bidirected loops are present. Consider now some $\mathcal{G} 
			\in 
			\mathbb{G}$. By the above result we can identify the Markov 
			equivalence class 
			from the independence model restricted to $\mathcal{S}$. This 
			equivalence class 
			has a maximal 
			element which is also in $\mathbb{G}$ and thus one can also in this 
			case 
			represent the Markov equivalence class using a DMEG.

			\section{Time series and unrolled graphs}
			\label{supp:ts}
			
			In this section we first relate the cyclic DGs and 
			DMGs to acyclic 
			graphs and then 
			use this to discuss Markov properties (see Definition 
			\ref{def:globalMarkov}) and faithfulness of local 
			independence 
			models in the time series case.
			
			\begin{defn}[$m$-separation \cite{richardson2003}]
				Let $\mathcal{G} = (V,E)$ be a DMG and let $\alpha,\beta\in V$. 
				A
				path between 
				$\alpha$ and 
				$\beta$ is said to be $m$-connecting if no noncollider on the 
				path is in 
				$C$ 
				and every collider on the path is in 
				$An(C)$.  For 
				disjoint sets $A,B,C\subseteq V$, we say that $A$ and 
				$B$ 
				are $m$-separated by $C$ if there is no $m$-connecting path 
				between 
				$\alpha\in 
				A$ and $\beta\in B$.
				In this case, we write $\msep{A}{B}{C}$.
			\end{defn}
			
			The above $m$-separation is a generalization
			of the well-known $d$-separation in DAGs. In
			this section we will only consider
			$m$-separation for DAGs, and will thus use the
			$d$-separation terminology. In Section \ref{supp:aug} 
			we 
			provide a more general relation between 
			$\mu$-separation and $m$-separation. 
			
			We first describe how to obtain a DAG from a DG
			such that the DAG, if read 
			the right way, will 
			give the same separation model as the DG. 
			This can be 
			useful in time series 
			examples as well as when working with continuous-time models. 
			Sokol and Hansen \cite{sokol2014} studied solutions to 
			sto\-cha\-stic 
			differential 
			equations and used a DAG in discrete time to approximate the 
			continuous-time dynamics. Danks and Plis \cite{danks2013} and 
			Hyttinen et al. 
			\cite{danks2016} used similar 
			translations between an {\it 
				unrolled} graph in which time 
			is 
			discrete and explicit and a {\it rolled} graph in which time is 
			implicit. Some authors use the term {\it unfolded} instead of 
			unrolled.  In 
			a rolled graph each node represents a stochastic process whereas in 
			an unrolled 
			graph each node represents a single random variable. Definition 
			\ref{def:LIGtoDAG} 
			shows 
			how to unroll a local independence graph and Lemma 
			\ref{lem:rolledOutSeps} 
			establishes a precise relationship between
			independence models in the rolled and unrolled 
			graphs.
			
			\begin{defn}
				Let $\mathcal{G} = (V,E)$ be a DG and 
				let $T \in \mathbb{N}$. The unrolled version of $\mathcal{G}$, 
				$\mathcal{D}_T(\mathcal{G}) = 
				(\bar{V}, \bar{E})$,
				is the DAG on nodes 
				$$
				\bar{V} = \{x_t^\alpha \mid (t,\alpha) \in \{0,1,\ldots , T\} 
				\times 
				V\}
				$$
				and with edges
				$$
				\bar{E} = \{x_s^\alpha \rightarrow x_t^\beta\mid \alpha 
				\rightarrow_\mathcal{G} \beta \text{ and } s < t\}.
				$$
				Let $D \subseteq 
				V$ 
				and let $T\in \mathbb{N}$. We define $D_{0:T} = \{ 
				x_t^\alpha \in\bar{V} \mid \alpha \in D,\ t \leq T\}$ and $D_T 
				= \{ 
				x_t^\alpha 
				\in\bar{V} \mid \alpha \in D,\ t = T\}$.
				
				\label{def:LIGtoDAG}
			\end{defn}
			
			\begin{center}
				\begin{figure}
					\resizebox{\textwidth}{!}{%
						\begin{tikzpicture}[scale = 1, every 
						node/.style={circle, inner 
							sep = .2mm}, 
						font=\larger, bend angle = 15]
						
						\node (a) {$\alpha$};
						\node (b) [below of=a] {$\beta$};
						\node (c) [below of=b] {$\gamma$};
						\path
						(a) [<-]edge (b)
						(b) [<-]edge node {} (c)
						(a) [->]edge [loop left] node {} (a)
						(b) [->]edge [loop right] node {} (b);
						
						\begin{scope}[xshift=2cm]
						\node (a1) {$x_0^\alpha$};
						\node (b1) [below of=a1] {$x_0^\beta$};
						\node (c1) [below of=b1] {$x_0^\gamma$};
						\node (a2) [right of =a1] {$x_1^\alpha$};
						\node (b2) [below of=a2] {$x_1^\beta$};
						\node (c2) [below of=b2] {$x_1^\gamma$};
						\node (a3) [right of =a2] {$x_2^\alpha$};
						\node (b3) [below of=a3] {$x_2^\beta$};
						\node (c3) [below of=b3] {$x_2^\gamma$};
						\node (a4) [right of =a3]{$x_3^\alpha$};
						\node (b4) [below of=a4] {$x_3^\beta$};
						\node (c4) [below of=b4] {$x_3^\gamma$};
						\path
						(a1) [->] edge (a2)
						(a1) [->] edge[bend left=20] (a3)
						(a1) [->] edge[bend left=30] (a4)
						(a2) [->] edge (a3)
						(a2) [->] edge[bend left=20] (a4)
						(a3) [->] edge (a4)
						(b1) [->] edge (b2)
						(b1) [->] edge[bend left=20] (b3)
						(b1) [->] edge[bend left=30] (b4)
						(b2) [->] edge (b3)
						(b2) [->] edge[bend left=20] (b4)
						(b3) [->] edge (b4)
						(b1) [->] edge (a2)
						(b1) [->] edge (a3)
						(b1) [->] edge (a4)
						(b2) [->] edge (a3)
						(b2) [->] edge (a4)
						(b3) [->] edge (a4)	
						(c1) [->] edge (b2)
						(c1) [->] edge (b3)
						(c1) [->] edge (b4)
						(c2) [->] edge (b3)
						(c2) [->] edge (b4)
						(c3) [->] edge (b4)					
						;
						\end{scope}
						\end{tikzpicture}
						\label{fig:unrolled}
					}
					\caption{A directed graph (left) and the corresponding 
						unrolled version 
						with four time points, $\mathcal{D}_3(\G)$, (right). 
						$x_t^\delta$ 
						denotes the $\delta$-coordinate 
						process at time $t$ for $\delta \in 
						\{\alpha,\beta,\gamma\}$.}
				\end{figure}
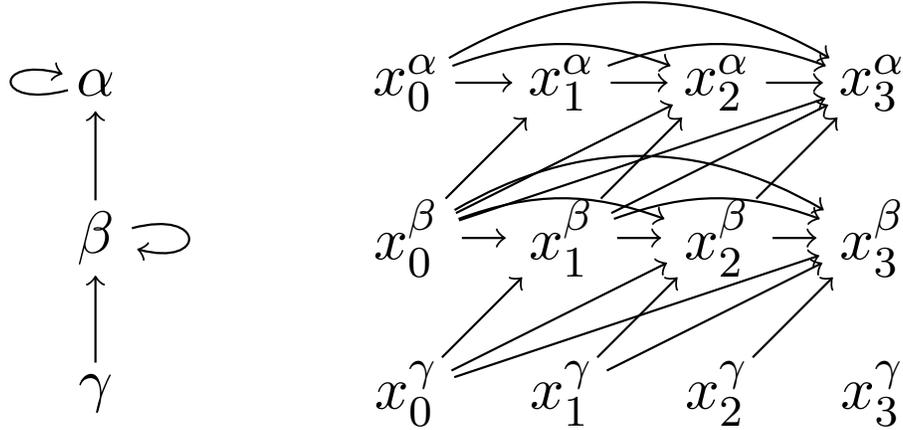
			\end{center}
			
			\begin{lem}
				Let $\G = (V,E)$ be a DG. If $\musepG{A}{B}{C}{\G}$ then 
				$\dsepG{(A\setminus 
					C)_{0:(T-1)}}{B_T}{C_{0:(T-1)}}{\mathcal{D}_T(\mathcal{G})}$.
				For large 
				enough values of $T$, the opposite implication holds as well.
				\label{lem:rolledOutSeps}
			\end{lem}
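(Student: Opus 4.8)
The plan is to prove the two implications by moving walks back and forth between $\G$ and its unrolled DAG $\mathcal{D}_T(\G)$ via the obvious projection, which sends a node $x_t^\delta$ to $\delta$ and an edge $x_s^\delta \to x_t^\epsilon$ to $\delta \to \epsilon$. The facts I will use repeatedly are: (i) every edge of $\mathcal{D}_T(\G)$ runs strictly forward in time, so $\mathcal{D}_T(\G)$ has no loops and any edge incident to a node at time $T$ must point into it, hence any internal node of a path at time $T$ is a collider; (ii) the projection maps a directed path of $\mathcal{D}_T(\G)$ to a directed walk of $\G$, while a directed path $\delta\to\cdots\to\epsilon$ in $\G$ of length $\ell$ lifts to a directed path $x_u^\delta\to x_{u+1}^{\cdot}\to\cdots\to x_{u+\ell}^\epsilon$ whenever $u+\ell\le T$; and (iii) the projection and the lifting preserve, position by position, whether a node is a collider or a noncollider on a walk (one checks this by matching the time-direction of each unrolled edge with the orientation the paper records for the corresponding directed loop of $\G$ — and since $\mathcal{D}_T(\G)$ has no loops, this bookkeeping is straightforward).

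For the first implication I argue the contrapositive, which will in fact hold for every $T$. Suppose $\pi$ is a $d$-connecting path in $\mathcal{D}_T(\G)$ from $x_s^\alpha\in(A\setminus C)_{0:(T-1)}$ to $x_T^\beta\in B_T$ given $C_{0:(T-1)}$; as $s<T$ it is nontrivial, and its projection $\omega$ is a nontrivial walk from $\alpha$ to $\beta$ in $\G$. By (i) the last edge of $\pi$ points into $x_T^\beta$, so the last edge of $\omega$ has a head at $\beta$, and $\alpha\notin C$ because $x_s^\alpha\notin C_{0:(T-1)}$. If $\gamma_i$ is a noncollider of $\omega$, then the corresponding node $x_{t_i}^{\gamma_i}$ of $\pi$ is a noncollider by (iii), hence (being internal and not a collider) has $t_i\le T-1$ by (i), so $x_{t_i}^{\gamma_i}\notin C_{0:(T-1)}$ gives $\gamma_i\notin C$. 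If $\gamma_i$ is a collider of $\omega$, then $x_{t_i}^{\gamma_i}$ is a collider of $\pi$, hence an ancestor in $\mathcal{D}_T(\G)$ of some $x_u^\epsilon$ with $\epsilon\in C$; projecting a witnessing directed path and using (ii) gives $\gamma_i\in An_\G(\epsilon)\subseteq An_\G(C)$. Thus $\omega$ is a $\mu$-connecting walk from $\alpha\in A$ to $\beta\in B$ given $C$, contradicting $\musepG{A}{B}{C}{\G}$.

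For the converse I again argue the contrapositive, and this is where a lower bound on $T$ enters. Assume there is a $\mu$-connecting walk in $\G$ from some $\alpha\in A$ (so $\alpha\notin C$) to some $\beta\in B$ given $C$; by Proposition \ref{prop:muConnPath} we may take it to be a $\mu$-connecting route $\omega=\langle\gamma_1,e_1,\ldots,e_n,\gamma_{n+1}\rangle$ with $\gamma_1=\alpha$, $\gamma_{n+1}=\beta$ and $n\le|V|$ (a route meets each node at most once, apart from $\beta$ which it meets at most twice). Since $\G$ is a DG and $e_n$ has a head at $\beta$, $\gamma_n\to_\G\gamma_{n+1}$. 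I lift $\omega$ by time-stamping: put $\sigma_n=0$ and $\sigma_i=\sigma_{i+1}-\mathrm{sgn}_i$ for $i=n-1,\ldots,1$, where $\mathrm{sgn}_i=+1$ if $e_i$ has a head at $\gamma_{i+1}$ and $-1$ otherwise; then set $t_i=\sigma_i-\min_{j\le n}\sigma_j\in\{0,\ldots,n-1\}$ for $i\le n$ and $t_{n+1}=T$. Because $\omega$ is a route the nodes $\gamma_1,\ldots,\gamma_n$ are pairwise distinct ($\beta$ already occupies position $n+1$), so $x_{t_1}^{\gamma_1},\ldots,x_{t_n}^{\gamma_n}$ are distinct and, once $T>n-1$, also distinct from $x_T^\beta$; hence the lift is a genuine path. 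Each $t$ increases exactly across the edges with a head at the later node, so each lifted edge is a valid edge of $\mathcal{D}_T(\G)$, the last step $\gamma_n\to\beta$ becoming $x_{t_n}^{\gamma_n}\to x_T^\beta$ since $t_n\le n-1<T$. The colliders of the lifted path occur exactly where $\gamma_i$ is a collider of $\omega$, hence where $\gamma_i\in An_\G(C)$; a directed $\G$-path from $\gamma_i$ to $C$ has length $\ell_i\le|V|$ and lifts to a directed path in $\mathcal{D}_T(\G)$ from $x_{t_i}^{\gamma_i}$ to some $x_{t_i+\ell_i}^\epsilon$ with $\epsilon\in C$, where $t_i+\ell_i\le(n-1)+|V|\le T-1$ as soon as $T\ge2|V|$, so this collider is an ancestor of $C_{0:(T-1)}$. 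The noncolliders of the lifted path occur where $\gamma_i$ is a noncollider of $\omega$, hence $\gamma_i\notin C$, so they avoid $C_{0:(T-1)}$; and $x_{t_1}^\alpha\in(A\setminus C)_{0:(T-1)}$, $x_T^\beta\in B_T$. Hence the lifted path $d$-connects these two nodes in $\mathcal{D}_T(\G)$. Since $V$ is finite, $T_0=2|V|$ serves uniformly over all choices of $A,B,C$.

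The main obstacle is entirely in this converse: arranging the time-stamps so that the lifted route is honestly a path with $\beta$ pinned at time $T$ while still leaving room below $T$ to realize the directed paths certifying the colliders' ancestry — this is exactly what forces "$T$ large enough", and it is why the route reduction of Proposition \ref{prop:muConnPath} is needed (to bound $n$). The only other routine-but-delicate point is the collider/noncollider bookkeeping under projection and lifting; it is harmless because $\mathcal{D}_T(\G)$ has no loops, so one merely has to align the forward-in-time direction of each unrolled edge with the loop orientation the paper attaches to the corresponding walk of $\G$.
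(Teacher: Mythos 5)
Your proof is correct and follows essentially the same strategy as the paper's: project a $d$-connecting path down to a $\mu$-connecting walk for one direction, and for the converse reduce to a route via Proposition \ref{prop:muConnPath}, lift it by time-stamping with $\beta$ pinned at time $T$, and take $T$ large enough to leave room for the directed paths certifying the colliders' ancestry. The only differences are cosmetic bookkeeping (you normalize the time-stamps to start at $0$ and obtain a path directly, with bound $T\ge 2|V|$, where the paper anchors at time $|E|+1$, uses $T\ge 3(|E|+1)+1$, and trims a walk to a path).
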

			
			\begin{proof}
				Assume first that  $\langle x_{s_0}^{\alpha_0}, e_1, 
				x_{s_1}^{\alpha_1} ,\ldots, e_l, 
				x_{s_l}^{\alpha_l} \rangle$ is a $d$-connecting path in 
				$\mathcal{D}_T(\mathcal{G})$. This path has a head at 
				$x_{s_l}^{\alpha_l} 
				\in B_T$. 
				Construct a 
				walk in $\mathcal{G}$ by for each node, $x_{s_k}^{\alpha_k}$, 
				taking the corresponding 
				node, $\alpha_k$, and for each edge $x_{s_k}^{\alpha_k} \sim 
				x_{s_{k+1}}^{\alpha_{k+1}}$ taking the corresponding, 
				endpoint-identical 
				edge $\alpha_{k} \sim 
				\alpha_{k+1}$ in 
				$\mathcal{G}$. On this walk, no noncollider is in $C$, and 
				every 
				collider is 
				an ancestor of a node in $C$.
				
				Assume instead that $\omega$ is a $\mu$-connecting walk in 
				$\mathcal{G}$ 
				from $A$ to $B$ 
				given $C$,
				
				$$
				\alpha_1 \sim \ldots \sim \alpha_{l-1} \rightarrow \alpha_l
				$$
				
				\noindent and let $T \geq 3(\vert E\vert + 1) + 1$. Using 
				Proposition 
				\ref{prop:muConnPath}, we can assume that $\omega$ has length 
				smaller than 
				or equal to $\vert E\vert + 1$. We construct 
				a $d$-connecting walk in $\mathcal{D}_T(\G)$
				in 
				the following way. Starting from $x_T^{\alpha_l}$, we choose 
				the edge 
				between 
				$x_{\vert E\vert + 1}^{\alpha_{l-1}}$ and $x_T^{\alpha_l}$. For 
				the 
				remaining 
				edges, $\alpha_k \sim 
				\alpha_{k+1}$, we choose the edge $x_{s_k-1}^{\alpha_k} 
				\rightarrow 
				x_{s_k}^{\alpha_{k+1}}$ if $\alpha_k \rightarrow 
				\alpha_{k+1}$ in $\omega$, and $x_{s_k}^{\alpha_{k + 1}} 
				\rightarrow 
				x_{s_k +1 }^{\alpha_{k}}$ if $\alpha_k \leftarrow 
				\alpha_{k+1}$ in $\omega$ where $s_k$ is determined by the 
				endpoints of the 
				previous 
				edge. 
				No 
				noncollider on 
				this walk 
				will be in $C_{0:(T-1)}$. Every collider will be in 
				$An_{\mathcal{D}_T(\mathcal{G})}(C_{0:(T-1)})$ as the collider 
				will be in 
				the time 
				slices $0$ 
				to 
				$2(\vert E\vert + 1)$.  This $d$-connecting walk can be trimmed 
				down to a 
				$d$-connecting path.
			\end{proof}
			
			We defined local
			independence for a class of continuous-time
			processes in Definition \ref{def:localIndep}. In this 
			section we define a similar
			notion for time series, as also introduced in 
			\cite{eichler2010}. Let $V 
			= 
			\{1,\ldots,n\}$. We consider a 
			multivariate time series $(X_t)_{t\in \mathbb{N}\cup {\{0\}}}$, $ 
			X_t = 
			(X_t^1,\ldots,X_t^n)$, of the form
			
			$$
			X_t^\alpha = f_{\alpha t}(X_{s<t}, \varepsilon_t^\alpha),
			$$
			
			\noindent where $X_{s<t} = \{X_u^\alpha \mid \alpha \in V,u < t 
			\}$. 
			The random variables $\{\varepsilon_t^\alpha\}$ are independent. 
			For 
			$S\subseteq 
			\mathbb{N} \cup \{0\}$ and
			$D\subseteq V$ 
			we 
			let 
			$X_S^D = \{X_s^\alpha \mid \alpha 
			\in 
			D, s \in S  \}$ and $X^D = \{ X^\alpha \mid \alpha \in D\}$. In the 
			case of 
			time 
			series, 
			a notable feature of local independence 
			and local 
			independence 
			graphs is that they provide a simple representation in comparison 
			with graphs 
			in which each vertex represents a single time-point variable. 
			
			\begin{defn}[Local independence, time series]
				Let $X$ be a multivariate time series. We say that $X^B$ is 
				{\it locally 
					independent} of $X^A$ given $X^C$ if for all $t \in 
				\mathbb{N}$, 
				$\beta 
				\in B$, $X_{s<t}^A$  and $X_t^\beta$ are conditionally 
				independent 
				given $ 
				X_{s<t}^C$, that is,
				
				$$
				X_{s<t}^A \perp \!\!\! \perp X_t^\beta \mid X_{s<t}^C
				$$
				
				\noindent and write 
				$A 
				\not\rightarrow B \mid C$.
			\end{defn}
			
			The above definition induces an independence
			model over $V$, which we will also refer to as
			the local independence model and denote
			$\mathcal{I}$ in the following. The main
			question that we address is whether this
			independence model is graphical.  That is, we
			will construct a DG, consider the Markov and 
			faithfulness
			properties of $\mathcal{I}$ and
			this DG, and relate them to Markov and
			faithfulness properties of the conditional
			independence model of finite distributions and
			unrolled versions of the DG.
			
			\begin{defn}[Faithfulness]
				Let $A,B,C \subseteq V$. Let $\I$ be an 
				independence model on $V$ and let $\G$
				be a DMG. We 
				say that $\I$ and $\G$ are {\it faithful} if 
				$\I = \I(\G)$, i.e., if
				
				$$\ind{A}{B}{C} \in \I \Leftrightarrow \musepG{A}{B}{C}{\G}.$$
				
				One can give analogous definitions using other notions of 
				graphical separation. Below we also consider faithfulness of a 
				probability distribution and a DAG, implicitly using 
				$d$-separation instead of $\mu$-separation in the above 
				definition.
				\label{def:faith}
			\end{defn} 
			
			Let 
			$\mathcal{D}_T$ for $T \geq 1$
			be the DAG on nodes $\{x_s^\alpha\mid s \in \{0,\ldots,T\}, 
			\alpha \in V \}$ such that there is an edge $x_s^\alpha \rightarrow 
			x_t^\beta$ 
			if and only if $f_{\beta t}$ depends on the argument
			$X_s^\alpha$. Let $D_S =  \{x_s^\alpha \mid
			\alpha\in D, s \in S\}$. Let $\G$ denote  
			the minimal 
			DG such 
			that its unrolled version, $\mathcal{D}_T(\G)$, is a supergraph of 
			$\mathcal{D}_T$ for all $T \in \mathbb{N}$.
			
			For all $T\in 
			\mathbb{N}$, the DAG 
			$\mathcal{D}_T(\mathcal{G})$ and the distribution of $X_{s\leq T}$ 
			satisfy 
			
			$$
			x_s^\alpha, x_t^\beta \text{ not adjacent } \Rightarrow X_s^\alpha 
			\perp \!\!\! 
			\perp 
			X_t^\beta \mid 
			(An(X_s^\alpha) \cup An(X_t^\beta)) \setminus 
			\{X_s^\alpha,X_t^\beta \},
			$$
			
			\noindent which is also known as the pairwise Markov property for 
			DAGs.
			Assume
			equivalence of the pairwise and global Markov properties for this 
			DAG and the
			finite-dimensional distribution (see e.g. \cite{lauritzen2017} for 
			necessary 
			and 
			sufficient conditions for this equivalence).
			Assume that $B$ is $\mu$-separated from $A$ by $C$ in the DG 
			$\mathcal{G}$, 
			$\musepG{A}{B}{C}{\G}$.  By Lemma 
			\ref{lem:rolledOutSeps},  
			$\msepG{(A\setminus 
				C)_{s<T}}{B_T}{C_{s<T}}{\mathcal{D}_T(\mathcal{G})}$, and 
			by the 
			global Markov property in this DAG, 
			$X_{s<T}^{A\setminus C} \perp \!\!\! \perp X_T^B \mid X_{s<T}^C$. 
			This 
			holds for any $T$, and therefore $A\setminus C \not\rightarrow B 
			\mid C$. It 
			follows that $A \not\rightarrow B \mid C$. This means that 
			$\mathcal{I}$ satisfies the global Markov property with respect to 
			$\G$.  
			
			Assume furthermore that the distribution of $X_T$ and the DAG 
			$\mathcal{D}_T(\mathcal{G})$ for some $T\in
			\mathbb{N}$ are faithful and  that 
			$T \geq 
			3(\vert E\vert +1) +1 $. Meek \cite{meek1995} 
			studied faithfulness of DAGs and argued that faithful 
			distributions 
			exist 
			for any DAG. 
			If $A \not \rightarrow B 
			\mid C$, then $A\setminus C \not\rightarrow B \mid C$ and 
			$X_{s<T}^{A\setminus 
				C} \perp \!\!\! \perp 
			X_T^B 
			\mid X_{s<T}^C$. By faithfulness of the 
			distribution of $X_T$ and the DAG $\mathcal{D}_T(\G)$, we have 
			$\msepG{(A\setminus C)_{s<T}}{B_T}{C_{s<T}}{\mathcal{D}_T(\G)}$ and 
			using 
			Lemma 
			\ref{lem:rolledOutSeps} this implies that $\musepG{A}{B}{C}{\G}$, 
			giving us 
			faithfulness of $\mathcal{I}$ and $\G$. 
			
			In summary, for every DG there exists a
			time series such that the local independence
			model induced by its distribution and the DG
			are faithful.
			
			\section{An augmentation criterion}
			\label{supp:aug}
			
			In this section we present results that allow us to determine 
			$\mu$-separation 
			from graphical separation in an undirected graph. An {\it 
				undirected graph} is 
			a graph, $(V,E)$, with an edge set that consists of 
			unordered 
			pairs of 
			nodes such that every edge is of the type $-$. Let $A,B$, and $C$ 
			be disjoint 
			subsets 
			of $V$. We say that $A$ and $B$ are separated by $C$ if every path 
			between 
			$\alpha\in A$ and $\beta\in B$ contains a node in $C$.
			
			When working with $d$-separation in DAGs, it is possible to give an 
			equivalent 
			se\-pa\-ration criterion using a derived undirected graph, the {\it 
				moral} 
			graph 
			\cite{lauritzen1996}. Didelez \cite{didelez2000} also gives both
			pathwise and so-called moral graph criteria for 
			$\delta$-separation. The 
			augmented graph below is a generalization of the moral graph 
			\cite{ richardson2003, richardson2002} which allows one to give a 
			criterion for 
			$m$-separation based on an augmented graph. We use the 
			similarity of $\mu$-separation and $m$-separation to give an 
			augmentation graph 
			criterion for $\mu$-separation. The first step 
			in making a connection to $m$-separation is to 
			explicate that each node 
			of a DMG re\-pre\-sents an entire stochastic 
			process, and notably, both the past and the present of that 
			process. We do that 
			using graphs of the below type.
			
			\begin{defn}
				Let $\mathcal{G} = (V,E)$ and let $B = 
				\{\beta_1,\ldots,\beta_k\}\subseteq 
				V$. The 
				$B$-history 
				version of $\mathcal{G}$, denoted by $\mathcal{G}(B)$, is the 
				DMG with node 
				set $V \disjU \{\beta_1^p,\ldots, \beta_k^p\}$ such that 
				$\mathcal{G}(B)_V 
				= 
				\mathcal{G}$ and
				
				\begin{itemize}
					\item $\alpha \leftrightarrow_{\mathcal{G}(B)} \beta_i^p$ 
					if $\alpha 
					\leftrightarrow_{\mathcal{G}} \beta_i$ and $\alpha \in V, 
					\beta_i \in 
					B$, 
					\item $\alpha \rightarrow_{\mathcal{G}(B)} \beta_i^p$ if
					$\alpha \rightarrow_\mathcal{G} \beta_i$ and $\alpha \in V, 
					\beta_i \in 
					B$.
				\end{itemize}
				\label{def:historyVersion}
			\end{defn}
			
			$\mathcal{G}(B)$ is a graph such that every node $b\in B$ is simply 
			split in 
			two: 
			one that represents the present and one that represents the past. 
			We define 
			$B^p = \{\beta_1^p,\ldots, \beta_k^p\}$.
			
			\begin{prop}
				Let $\mathcal{G} = (V,E)$ be a DMG, and let $A,B,C \subseteq 
				V$. Then 
				
				$$
				\musepG{A}{B}{C}{\G} \Leftrightarrow \msepG{A\setminus 
					C}{B^p}{C}{\G(B)}.
				$$
				\label{prop:muConnHist}
			\end{prop}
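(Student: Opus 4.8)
The plan is to prove the logically equivalent statement about connecting objects: there is a $\mu$-connecting walk from $A$ to $B$ given $C$ in $\G$ if and only if there is an $m$-connecting path from $A\setminus C$ to $B^p$ given $C$ in $\G(B)$. The graph $\G(B)$ of Definition~\ref{def:historyVersion} is built precisely for this: each $\beta_i^p$ is a sink (no outgoing edges, and every incident edge has a head at $\beta_i^p$), so any walk that reaches $\beta_i^p$ has automatically fulfilled the requirement in Definition~\ref{def:muConnSeq} that the last edge have a head at $\beta_i$, while restricting the source set to $A\setminus C$ reflects the requirement $\alpha\notin C$. Two facts will be used throughout. First, since $C\subseteq V$ and the $B^p$-nodes are sinks, no directed path in $\G(B)$ can pass through a node of $B^p$, so ancestry among nodes of $V$ agrees in $\G$ and $\G(B)$ and in particular $An_{\G(B)}(C)=An_\G(C)$. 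Second, if some $\beta_i^p$ occurred as a non-endpoint of a path in $\G(B)$ it would be a collider there (both incident edges point into it), and since $\beta_i^p\notin An_{\G(B)}(C)$ such a path is blocked; hence on any $m$-connecting path into $B^p$ the terminal node is the only node of $B^p$, and all other nodes lie in $V$.

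For the direction that a $\mu$-connecting walk yields an $m$-connecting path, suppose $\omega$ is a $\mu$-connecting walk from some $\alpha\in A$ to some $\beta\in B$ given $C$. Then $\alpha\notin C$, so $\alpha\in A\setminus C$, and by Proposition~\ref{prop:muConnPath} we may replace $\omega$ by a $\mu$-connecting route $\rho$ from $\alpha$ to $\beta$ given $C$. Form $\rho'$ by replacing the terminal node $\beta$ of $\rho$ with $\beta^p$ and the terminal edge, which has a head at $\beta$ and is therefore $\gamma\rightarrow\beta$ or $\gamma\leftrightarrow\beta$ at its penultimate node $\gamma$, by the corresponding edge $\gamma\rightarrow\beta^p$ or $\gamma\leftrightarrow\beta^p$ present in $\G(B)$ by construction. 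Because $\rho$ is a route, $\beta$ occurs at most twice on $\rho$ and all other nodes at most once; after the replacement $\beta^p$ occurs once, $\beta$ occurs at most once, and all other nodes at most once, so $\rho'$ is a path. The replacement leaves the mark of the terminal edge at $\gamma$ unchanged, hence every interior node keeps its collider/noncollider status, and by the first fact the ancestor condition on colliders transfers; thus $\rho'$ is $m$-connecting from $\alpha\in A\setminus C$ to $\beta^p\in B^p$ given $C$ in $\G(B)$.

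For the converse, suppose $\pi$ is an $m$-connecting path from some $\alpha\in A\setminus C$ to some $\beta^p\in B^p$ given $C$ in $\G(B)$; it is nontrivial because its endpoints differ. By the second fact, every interior node of $\pi$ lies in $V$, and its terminal edge issues from a node $\gamma\in V$ and has a head at $\beta^p$. Replace $\beta^p$ by $\beta$ and that edge by its counterpart in $\G$, which has a head at $\beta$; the result $\omega$ is a nontrivial walk from $\alpha$ to $\beta$ all of whose edges and interior nodes lie in $\G$. Each interior node retains its collider/noncollider status, so no noncollider of $\omega$ is in $C$ and, by the first fact, every collider of $\omega$ lies in $An_\G(C)$; moreover $\alpha\notin C$ and the last edge of $\omega$ has a head at $\beta$. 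Hence $\omega$ is $\mu$-connecting from $\alpha\in A$ to $\beta\in B$ given $C$ in $\G$. Note that $A,B,C$ need not be disjoint, but $m$-separation is only applied to $A\setminus C$ and $B^p$, which are disjoint from each other and from $C$ since $B^p\cap V=\emptyset$; the degenerate cases $A\subseteq C$, $A=\emptyset$, or $B=\emptyset$ make both sides vacuously true.

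I do not expect a single hard step, but rather careful bookkeeping. The delicate points are: checking that redirecting the terminal edge into $\beta^p$ (respectively back to $\beta$) leaves every collider/noncollider unchanged, including when that edge is a directed or bidirected loop at a node of $B$; and establishing the second fact above, namely that a node of $B^p$ cannot sit in the interior of an $m$-connecting path. The walk-to-path reduction, which could otherwise be awkward in the presence of directed cycles, is handled cleanly by invoking the route reduction of Proposition~\ref{prop:muConnPath} before performing the surgery at the endpoint.
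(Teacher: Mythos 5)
Your proof is correct and follows essentially the same route as the paper's: reduce the $\mu$-connecting walk to a route via Proposition \ref{prop:muConnPath}, then swap the terminal edge $\gamma \starrightarrow \beta$ for $\gamma \starrightarrow \beta^p$ (and conversely), using that the $B^p$-nodes are sinks so they preserve ancestry and cannot occur in the interior of an $m$-connecting path. Your write-up is in fact more careful than the paper's about the bookkeeping (loops at $\beta$, disjointness of the sets, degenerate cases), but the underlying argument is the same.
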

			
			\begin{proof}
				Assume first that there is a $\mu$-connecting walk from 
				$\alpha\in A$ to 
				$\beta\in B$ given $C$ in $\mathcal{G}$. By definition $\alpha 
				\in 
				A\setminus C$. By Proposition 
				\ref{prop:muConnPath} there is a $\mu$-connecting route,
				
				$$
				\alpha \sim \ldots \sim \beta \sim \ldots \gamma  
				\starrightarrow \beta.
				$$
				
				The subwalk from $\alpha$ to $\gamma$ is also present in 
				$\mathcal{G}(B)$ and composing it with $\gamma  
				\starrightarrow_{\mathcal{G}(B)} \beta^p$ 
				gives an $m$-connecting path between $A\setminus C $ and $B^p$ 
				which is 
				open 
				given $C$.  
				
				On the other hand, if there is an $m$-connecting path from 
				$\alpha\in A 
				\setminus C$ 
				to 
				$\beta^p\in B^p$ given $C$ in $\mathcal{G}(B)$, then no 
				non-endpoint node 
				is in $B^p$,
				
				$$
				\alpha \sim \ldots \gamma \starrightarrow \beta^p
				$$
				
				The subpath from $\alpha$ to $\gamma$ is present in 
				$\mathcal{G}$ and can 
				be composed with the edge $\gamma \starrightarrow \beta$ to 
				obtain 
				a 
				$\mu$-connecting walk from $A$ to $B$ given $C$ in 
				$\mathcal{G}$.
			\end{proof}

			\begin{defn}
				Let $\mathcal{G} = (V,E)$ be a DMG. We define the {\it 
					augmented graph} of 
				$\mathcal{G}$, $\mathcal{G}^a$, to be the undirected graph 
				without loops 
				and with node set 
				$V$ such that two distinct nodes are adjacent if and only if 
				the two nodes 
				are 
				collider connected in $\mathcal{G}$.
			\end{defn}
			
			\begin{prop}
				Let $\mathcal{G} = (V,E)$ be a DMG, $A,B,C\subseteq V$. Then 
				$\musepG{A}{B}{C}{\G}$ if and only if 
				$A\setminus C$ and $B^p$ are separated by $C$ in the augmented 
				graph of 
				$\mathcal{G}(B)_{An(A\cup B^p\cup C)}$.
			\end{prop}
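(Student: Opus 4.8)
The plan is to derive this proposition by composing Proposition~\ref{prop:muConnHist} with the augmentation (moralization) criterion for $m$-separation. By Proposition~\ref{prop:muConnHist},
\[
\musepG{A}{B}{C}{\G}\ \Longleftrightarrow\ \msepG{A\setminus C}{B^p}{C}{\G(B)},
\]
and the three sets $A\setminus C$, $B^p$, $C$ are pairwise disjoint in $\G(B)$: $A\setminus C$ and $C$ are disjoint by construction, while $B^p=\{\beta_1^p,\ldots,\beta_k^p\}$ consists of the fresh ``past'' copies, which lie outside $V$, whereas $A\setminus C\subseteq V$ and $C\subseteq V$. It therefore suffices to characterize $\msepG{A\setminus C}{B^p}{C}{\G(B)}$ by separation in an undirected graph.

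For this I would invoke the augmentation criterion for $m$-separation, the mixed-graph generalization of the moral-graph criterion (\cite{richardson2002, richardson2003}): for pairwise disjoint $A',B',C'$ in a directed mixed graph $\mathcal{H}$, one has $\msepG{A'}{B'}{C'}{\mathcal{H}}$ if and only if $A'$ and $B'$ are separated by $C'$ in the augmented graph of $\mathcal{H}_{An(A'\cup B'\cup C')}$. Applying this with $\mathcal{H}=\G(B)$, $A'=A\setminus C$, $B'=B^p$, $C'=C$ yields that $\msepG{A\setminus C}{B^p}{C}{\G(B)}$ holds exactly when $A\setminus C$ and $B^p$ are separated by $C$ in the augmented graph of $\G(B)_{An((A\setminus C)\cup B^p\cup C)}$; since $(A\setminus C)\cup C=A\cup C$, this index set equals $An(A\cup B^p\cup C)$, and combining the two equivalences gives the claim.

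The point that deserves care, and the main obstacle, is that $\G(B)$ inherits any directed cycles of $\G$, so the augmentation criterion cannot simply be quoted in its acyclic (ADMG) form and its proof has to be checked for the cyclic setting. That proof has two halves. From an $m$-connecting path $\pi$ between $A\setminus C$ and $B^p$ given $C$ in $\G(B)$ one extracts a $C$-avoiding path in the augmented graph: all nodes of $\pi$ lie in $An((A\setminus C)\cup B^p\cup C)$ --- each collider lies in $An(C)$ by definition, and each noncollider, traced along $\pi$ through an outgoing directed edge, eventually meets a collider or an endpoint --- so $\pi$ lies in $\G(B)_{An(\cdots)}$; cutting $\pi$ at its noncolliders makes each successive pair of noncolliders, and the two end segments, collider-connected there, hence adjacent in the augmented graph, and the internal vertices of the resulting walk, being noncolliders on $\pi$, avoid $C$, so the walk reduces to a suitable path. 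This half uses no acyclicity. The converse --- turning a $C$-avoiding augmented-graph path into an $m$-connecting path by expanding each edge into a collider-connecting walk and splicing --- relies, in the ADMG proof, on acyclicity precisely when verifying that the colliders so produced are ancestors of $C$ rather than merely ancestors of $(A\setminus C)\cup B^p\cup C$; for $\G(B)$ this step must be re-argued, for instance by working throughout with $m$-connecting walks rather than paths and with $An_{\G(B)}(\cdot)$, or by exploiting that the target vertices $B^p$ have no children in $\G(B)$. Throughout, one uses that ancestry in $\G(B)$ is unchanged under restriction to the ancestral set $An((A\setminus C)\cup B^p\cup C)$, so the ancestor conditions may be read in either graph.
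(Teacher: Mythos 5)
Your proposal follows the paper's proof exactly in structure: reduce $\mu$-separation to $m$-separation in $\G(B)$ via Proposition~\ref{prop:muConnHist}, then apply the augmentation criterion for $m$-separation to the (possibly cyclic) graph $\G(B)$. The one place you diverge is the treatment of acyclicity: the paper simply removes all loops (which changes neither the path-based $m$-separation statements nor the augmented graph) and then invokes Theorem~1 of \cite{richardson2003} directly, using the remark in that paper that its proof nowhere uses acyclicity, so the criterion holds verbatim for cyclic DMGs. You instead assert that the converse half of that theorem's proof relies on acyclicity and ``must be re-argued,'' and you leave that re-argument as a sketch (work with walks instead of paths, or use that $B^p$ has no children). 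That is the only real weakness: as written, the key step of your proof is an unproved claim about cyclic graphs, and your diagnosis of where acyclicity enters is at odds with the source you would cite. If you either verify Richardson's remark or carry out your proposed walk-based repair in full, the argument is complete; you should also note explicitly, as the paper does, that loops can be discarded before applying the criterion.
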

			
			\begin{proof}
				Using Proposition \ref{prop:muConnHist} we have that 
				$\musepG{A}{B}{C}{\G} 
				\Leftrightarrow \msepG{A\setminus C}{B^p}{C}{\G(B)}$. Let 
				$\G(B)'$ be the DMG obtained from $\G(B)$ by removing all 
				loops. Then 
				$\msepG{A\setminus C}{B^p}{C}{\G(B)}$  if and only if 
				$\msepG{A\setminus C}{B^p}{C}{\G(B)'}$. We can apply Theorem 
				1 of 
				\cite{richardson2003}. That theorem assumes an ADMG, however, 
				as noted in 
				the paper, acyclicity is not used in the proof 
				which therefore also applies to $\G(B)'$, and we conclude that 
				$\msepG{A\setminus C}{B^p}{C}{\G(B)'}$ if 
				and only if $A \setminus C$ and $B^p$ 
				are separated by $C$ in $(\G(B)'_{An(A\cup B^p\cup C)})^a = 
				(\G(B)_{An(A\cup B^p\cup C)})^a$.

			\end{proof}
			
			\section{Existence of compensators}
			\label{supp:compensators}
			
			Let $Z = (Z_t)$ denote a real-valued stochastic process defined on a
			probability space $(\Omega, \mathcal{F},
			P)$, and let $(\mathcal{G}_t)$ denote a
			right-continuous and complete filtration
			w.r.t. $P$ such that $\mathcal{G}_t \subseteq
			\mathcal{F}$. Note that $Z$ is not assumed
			adapted w.r.t. the filtration. When
			$Z$ is a right-continuous process of finite and integrable
			variation, it follows from Theorem
			VI.21.4 in \cite{rogers2000} that there exists a
			predictable process of integrable variation,
			$Z^p$, such that ${}^{\mathrm{o}}Z - Z^{\mathrm{p}}$ is 
			a
			martingale. Here ${}^{\mathrm{o}}Z$ denotes the
			\emph{optional projection} of $Z$, which is a
			right-continuous version of the process
			$(E(Z_t \mid \mathcal{G}_t))$, cf. Theorem
			VI.7.1 and Lemma VI.7.8 in
			\cite{rogers2000}. The process $
			\Lambda = Z^{\mathrm{p}}$ is called the dual
			predictable projection or compensator of the
			optional projection ${}^{\mathrm{o}}Z$ as well
			as of the process $Z$ itself. It depends
			on the filtration $(\mathcal{G}_t)$. 
			
			If $Z$ is adapted w.r.t. a (right-continuous and
			complete) filtration $(\mathcal{F}_t)$, it has a compensator
			$\tilde{\Lambda} = Z^{\mathrm{p}}$ such that
			$Z - \tilde{\Lambda}$ is an $\mathcal{F}_t$ martingale. 	
			When $\mathcal{G}_t \subseteq \mathcal{F}_t$ it may be of interest 
			to understand the
			relation between $\Lambda$, as defined above
			w.r.t. $(\mathcal{G}_t)$, and $\tilde{\Lambda}$.
			If $\tilde{\Lambda}$ is continuous with $\tilde{\Lambda}_0 = 0$, 
			say, we may ask
			if $\Lambda$ equals the \emph{predictable projection},
			$E(\tilde{\Lambda}_t \mid \mathcal{G}_{t-})$. As $\tilde{\Lambda}$  
			is assumed continuous and is of finite variation, 
			$$\tilde{\Lambda}_t = \int_0^t \tilde{\lambda}_s
			\mathrm{d} s.$$
			If $( \tilde{\lambda}_t)$ itself is an integrable right-continuous
			process, then its
			optional projection, $( E(\tilde{\lambda}_t \mid 
			\mathcal{G}_{t}))$, is
			an integrable right-continuous process, and 
			$$E(\tilde{\Lambda}_t \mid \mathcal{G}_{t-}) = \int_0^t 
			E(\tilde{\lambda}_s \mid
			\mathcal{G}_{s}) \mathrm{d} s $$ is a finite-variation, continuous
			version of the predictable projection of $\tilde{\Lambda}$. It is 
			clear that
			$$E(Z_t \mid \mathcal{G}_t) - \int_0^t E(\tilde{\lambda}_s \mid
			\mathcal{G}_{s}) \mathrm{d} s$$ is
			a $\mathcal{G}_t$ martingale, thus 
			$$\Lambda_t = \int_0^t E(\tilde{\lambda}_s \mid \mathcal{G}_{s}) 
			\mathrm{d} s$$
			is a compensator of $Z$ w.r.t. the filtration $(\mathcal{G}_t)$.
			
			We formulate the consequences of the discussion as a criterion
			for determining local independence via the computation of
			conditional expectations. The setup is as in Definition 
			\ref{def:localIndep} in Section \ref{sec:locind}.
			
			\begin{prop} Assume that the process $X^{\beta}$ for all $\beta \in 
				V$
				has a compensator w.r.t. the filtration $(\mathcal{F}^V_t)$ of 
				the form 
				$$\Lambda^{V,\beta}_t = \Lambda_0^{V,\beta} + \int_0^t 
				\lambda_s^{\beta} \mathrm{d}s$$
				for an integrable right-continuous process 
				$(\lambda_t^{\beta})$ and a
				deterministic constant $\Lambda_0^{V,\beta}$. Then $X^{\beta}$ 
				is
				locally independent of $X^A$ given $X^C$ for $A, C \subseteq V$ 
				if the
				optional projection 
				$$E(\lambda_t^{\beta} \mid \mathcal{F}^{A \cup C}_{t})$$
				has an $\mathcal{F}_t^{C}$ adapted version.
			\end{prop}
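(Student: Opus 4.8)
The plan is to apply the construction of Section~\ref{supp:compensators} with $\mathcal{G}_t = \mathcal{F}^{A\cup C}_t \subseteq \mathcal{F}^V_t = \mathcal{F}_t$, $Z = X^\beta$, $\tilde{\Lambda} = \Lambda^{V,\beta}$ and $\tilde{\lambda} = \lambda^\beta$. Since $\Lambda^{V,\beta}$ is continuous with deterministic --- in particular $\mathcal{F}^{A\cup C}_0$-measurable --- initial value $\Lambda_0^{V,\beta}$, and $(\lambda_t^\beta)$ is an integrable right-continuous process, that discussion shows that
\[
\Lambda_0^{V,\beta} + \int_0^t E(\lambda_s^\beta \mid \mathcal{F}^{A\cup C}_s)\,\mathrm{d}s
\]
is a version of the compensator $\Lambda^{A\cup C,\beta}$ of $X^\beta$ with respect to the filtration $(\mathcal{F}^{A\cup C}_t)$, i.e., $E(X^\beta_t \mid \mathcal{F}^{A\cup C}_t)$ minus this process is an $\mathcal{F}^{A\cup C}_t$ martingale. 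By Definition~\ref{def:localIndep}, it then suffices to exhibit an $\mathcal{F}^C_t$-predictable version of $\Lambda^{A\cup C,\beta}$.

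Next I would use the hypothesis. By assumption $E(\lambda_t^\beta \mid \mathcal{F}^{A\cup C}_t)$ has an $\mathcal{F}^C_t$-adapted version, so for each fixed $t$ this random variable is a.s. $\mathcal{F}^C_t$-measurable, and hence $E(\lambda_t^\beta \mid \mathcal{F}^{A\cup C}_t) = E(\lambda_t^\beta \mid \mathcal{F}^C_t)$ almost surely. As in Section~\ref{supp:compensators}, the optional projection of $(\lambda_t^\beta)$ with respect to $(\mathcal{F}^C_t)$ exists and admits a right-continuous version $(m_t^\beta)$, which is $\mathcal{F}^C_t$-adapted and therefore progressively measurable with respect to $(\mathcal{F}^C_t)$, and $m_t^\beta = E(\lambda_t^\beta \mid \mathcal{F}^{A\cup C}_t)$ a.s. for every $t$.

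Finally I would set $\Lambda'_t = \Lambda_0^{V,\beta} + \int_0^t m_s^\beta\,\mathrm{d}s$. This process is continuous, hence $\mathcal{F}^C_t$-predictable, and of finite and integrable variation since $E\int_0^T |m_s^\beta|\,\mathrm{d}s \le E\int_0^T E(|\lambda_s^\beta| \mid \mathcal{F}^C_s)\,\mathrm{d}s = E\int_0^T |\lambda_s^\beta|\,\mathrm{d}s < \infty$. Because $m_s^\beta = E(\lambda_s^\beta \mid \mathcal{F}^{A\cup C}_s)$ a.s. for each $s$, Fubini's theorem gives $\int_0^t m_s^\beta\,\mathrm{d}s = \int_0^t E(\lambda_s^\beta \mid \mathcal{F}^{A\cup C}_s)\,\mathrm{d}s$ a.s. for each $t$, and as both sides are continuous in $t$ they are indistinguishable; thus $\Lambda'$ is indistinguishable from $\Lambda^{A\cup C,\beta}$, and in particular $E(X^\beta_t \mid \mathcal{F}^{A\cup C}_t) - \Lambda'_t$ is an $\mathcal{F}^{A\cup C}_t$ martingale. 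Hence $\Lambda'$ is an $\mathcal{F}^C_t$-predictable version of $\Lambda^{A\cup C,\beta}$, and by Definition~\ref{def:localIndep} $X^\beta$ is locally independent of $X^A$ given $X^C$. The step I expect to need the most care is this last measure-theoretic bookkeeping: arguing that an $\mathcal{F}^C$-adapted version of the optional projection can be chosen jointly measurable in $(t,\omega)$ so that its time integral is a well-defined $\mathcal{F}^C_t$-adapted (indeed continuous) process, and upgrading the pointwise-in-$t$ almost sure identities to indistinguishability of the integrated processes via Fubini --- both handled exactly as in the discussion preceding the statement.
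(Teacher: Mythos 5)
Your proposal is correct and follows essentially the same route as the paper, which proves this proposition implicitly through the discussion immediately preceding it: take $\mathcal{G}_t = \mathcal{F}^{A\cup C}_t$ to identify $\Lambda^{A\cup C,\beta}_t = \Lambda_0^{V,\beta} + \int_0^t E(\lambda_s^\beta \mid \mathcal{F}^{A\cup C}_s)\,\mathrm{d}s$ as a compensator, then observe that the hypothesis makes this integral (up to indistinguishability) a continuous $\mathcal{F}^C_t$-adapted, hence $\mathcal{F}^C_t$-predictable, process. Your extra care with the Fubini/indistinguishability bookkeeping is sound and only makes explicit what the paper leaves implicit.
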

			
			Another way to phrase the conclusion of the proposition is that if 
			the optional
			projection $E(\lambda_t^{\beta} \mid \mathcal{F}^{C}_{t})$ is
			indistinguishable from $E(\lambda_t^{\beta} \mid \mathcal{F}^{A \cup
				C}_{t})$, then $A \not \rightarrow \beta \mid C$, and it is a 
			way
			of testing local independence via the computation of conditional
			expectations. It is a precise formulation of the innovation theorem
			stating how to compute compensators for one filtration via 
			conditional 
			expectations 
			of compensators for a superfiltration. 
			
			\section{Proofs} The following are proofs of the results from 
			the main paper.
			\label{sec:proofs}
			
			\begin{proof}[Proof of Proposition \ref{prop:muConnAllCollInC}]
				Let $\omega$ be a $\mu$-connecting walk given 
				$C$ and let $\gamma$ 
				be a collider on the walk such that $\gamma \in An(C) \setminus 
				C$. Then 
				there 
				exists a 
				subwalk $\bar{\omega} = \alpha_1 \starrightarrow \gamma 
				\starleftarrow 
				\alpha_2$, and an open (given $C$), directed path from $\gamma$ 
				to $\delta 
				\in 
				C$, 
				$\pi$. By composing  $\alpha_1 \starrightarrow \gamma$ with 
				$\pi$, 
				$\pi^{-1}$, 
				and $\gamma \starleftarrow 
				\alpha_2$ we get an open walk which is endpoint-identical to 
				$\bar{\omega}$ 
				and 
				with its only collider, $\delta$, in 
				$C$, and we can substitute $\bar{\omega}$ with this new walk. 
				Making 
				such 
				a substitution for every collider in $An(C) \setminus C$ on 
				$\omega$, we 
				obtain a $\mu$-connecting walk on which every collider is in 
				$C$.
			\end{proof}
			
			\begin{proof}[Proof of Proposition \ref{prop:muConnPath}]
				Assume that 
				we start from $\alpha$ and continue along $\omega$ until some 
				node, $\gamma 
				\neq \beta$, is 
				repeated. Remove the cycle from $\gamma$ to $\gamma$ 
				to obtain another walk from $\alpha$ to $\beta$, 
				$\bar{\omega}$. If 
				$\gamma= \alpha$, then 
				$\bar{\omega}$ is $\mu$-connecting. Instead assume $\gamma \neq 
				\alpha$. 
				If  this instance of $\gamma$ is a 
				noncollider on $\bar{\omega}$ then it must have been a 
				noncollider in an 
				instance on $\omega$ and thus $\gamma \notin C$. If on the 
				other hand this 
				instance of $\gamma$ is a collider on $\bar{\omega}$ then 
				either $\gamma$ 
				was a collider in an instance on $\omega$ or the ancestor of a 
				collider on 
				$\omega$, and thus $\gamma \in An(C)$. In either case, we see 
				that 
				$\bar{\omega}$ is a $\mu$-connecting walk. Repeating this 
				argument, we 
				can construct a $\mu$-connecting walk where only $\beta$ is 
				potentially 
				repeated. If there is $n > 2$ instances of $\beta$ then we can 
				remove 
				at least $n-2$ of them as above as long as we leave an edge 
				with a head at 
				the final $\beta$. 
			\end{proof}
			
			\begin{proof}[Proof of Proposition \ref{prop:musepDG}]
				Note first that a vertex can be a 
				parent 
				of 
				itself. The result then follows from the fact
				that
				$\musep{\alpha}{\beta}{\mathrm{pa}(\beta)}$.
			\end{proof}
			
			\begin{proof}[Proof of Proposition \ref{prop:closedMarg}]
				The first statement follows from the fact that no edge without 
				heads (i.e. 
				$-$) is ever added. Assume for the second statement that $\G$ 
				satisfies 
				(\ref{eq:mDGprop}). Let $M = V \backslash O$. Assume $\alpha 
				\leftrightarrow_\mathcal{M} \beta$. By definition of the latent 
				projection, 
				we can find an endpoint-identical walk between $\alpha$ and 
				$\beta$ in $\G$ 
				with no colliders and such that all non-endpoint nodes are in 
				$M$. 
				Either this walk has a bidirected edge at $\alpha$ in which 
				case $\alpha 
				\leftrightarrow_\G \alpha$ by (\ref{eq:mDGprop}) and therefore 
				also $\alpha 
				\leftrightarrow_\mathcal{M} \alpha$. Otherwise, there is a 
				directed edge 
				from some node $\gamma \in M$ such that $\gamma \rightarrow_\G 
				\alpha$. 
				Then the walk $\alpha \leftarrow \gamma \rightarrow \alpha$ is 
				present in 
				$\G$ and therefore $\alpha 
				\leftrightarrow_\mathcal{M} \alpha$ because $\mathcal{M}$ is a 
				latent 
				projection.
			\end{proof}
			
			\begin{proof}[Proof of Proposition \ref{prop:selfEdgeChar}]
				Assume first that $\alpha$ has no loops. In this case, there 
				are no 
				bidirected edges between $\alpha$ and any node, and therefore 
				the edges 
				that have a head at $\alpha$ have a tail at the previous node. 
				Any 
				nontrivial walk between $\alpha$ and $\alpha$ is 
				therefore blocked by $V\setminus \{\alpha\}$. Conversely, if 
				$\alpha$ has a 
				loop, then $\alpha 
				\starrightarrow \alpha$ is a $\mu$-connecting walk given 
				$V\setminus 
				\{\alpha\}$.
			\end{proof}
			
			\begin{proof}[Proof of Theorem \ref{thm:DMGmarg}] Let $M = V 
				\backslash O$. 
				Let first $\omega$ be a $\mu$-connecting walk from $\alpha\in 
				A$ to 
				$\beta\in B$ given $C$ in $\G$. Using Proposition 
				\ref{prop:muConnAllCollInC}, we can find a $\mu$-connecting 
				walk from 
				$\alpha\in A$ to $\beta\in B$ given $C$ in $\G$ such that all 
				colliders are 
				in $C$. Denote this walk by $\bar{\omega}$. Every node, $m$, on 
				$\bar{\omega}$ which is in $M$ is on a subwalk of 
				$\bar{\omega}$, $\delta_1 
				\sim \ldots \sim m \sim \ldots \sim \delta_2$, such that 
				$\delta_1,\delta_2\in O$ and all other nodes on the subwalk are 
				in $M$. 
				There are no 
				colliders on this subwalk and therefore there is an 
				endpoint-identical edge 
				$\delta_1 \sim \delta_2$ in $\mathcal{M}$. Substituting all 
				such subwalks 
				with their corresponding endpoint-identical edges gives a 
				$\mu$-connecting 
				walk in $\mathcal{M}$.
				
				On the other hand, let $\omega$ be a $\mu$-connecting walk from 
				$A$ to $B$ 
				given $C$ in $\mathcal{M}$. Consider some edge in $\omega$ 
				which is not in 
				$\G$. In $\G$ there is an endpoint-identical walk with no 
				colliders and no 
				non-endpoint nodes in $C$. Substituting each of these edges 
				with such an 
				endpoint-identical walk gives a $\mu$-connecting walk in $\G$ 
				using 
				Proposition \ref{prop:preserveAn}.
			\end{proof}
			
			\begin{proof}[Proof of Proposition \ref{prop:margAlgo}]
				We first note that in Algorithm \ref{algo:marg} adding an edge 
				will never 
				remove any triroutes. Therefore, Algorithm \ref{algo:marg} 
				returns 
				the same output regardless of the order in which the algorithm 
				adds edges. 
				
				Let $\mathcal{M}$ denote 
				the output of Algorithm \ref{algo:marg} which is clearly a DMG. 
				The graphs 
				$\mathcal{M}$ and $m(\G, O)$ have the same 
				node set, thus it suffices to show that also the edge sets are 
				equal. 
				Assume first $\alpha \overset{e}{\sim}_{m(\G, O)} \beta$. Then 
				there 
				exist an endpoint-identical walk in $\G$ that contains no 
				colliders and 
				such that all the non-endpoint nodes are in $M = V\setminus O$, 
				$\alpha 
				\sim \gamma_1 \sim 
				\ldots \sim \gamma_n \sim \gamma_{n+1} = \beta$. Let $e_l$ be 
				the edge 
				between $\alpha$ and $\gamma_l$ which is endpoint-identical to 
				the subwalk 
				from $\alpha$ to $\gamma_l$. If $e_l$ is present in 
				$\mathcal{M}_k$ at some 
				point during 
				Algorithm 
				\ref{algo:marg}, then edge $e_{l+1}$ will also be added before 
				the 
				algorithm terminates, $l = 1,\ldots,n$. We see that $e_1$ is in 
				$\G$, and 
				this means that $e$ 
				is 
				also present in $\mathcal{M}$.
				
				On the other hand, assume that some edge $e$ is in 
				$\mathcal{M}$. If $e$ is 
				not in $\G$, then we can find a noncolliding, 
				endpoint-identical 
				triroute in the graph $\mathcal{M}_k$ ($k$ has the value that 
				it takes when 
				the algorithm terminates) such that the noncollider is in $M$. 
				By 
				repeatedly using 
				this argument, we can from any edge, $e$, in $\mathcal{M}$ 
				construct
				an endpoint-identical walk in $\G$ that contains no colliders 
				and such that 
				every non-endpoint node is in $M$, and therefore $e$ is also 
				present in 
				$m(\G, O)$.
			\end{proof}
			
			\begin{proof}[Proof of Proposition \ref{prop:IPepiConnPath}]
				Let 
				
				$$
				\alpha\ *\!\! \rightarrow \gamma_1 \leftrightarrow \ldots 
				\leftrightarrow 
				\gamma_n \leftrightarrow \beta
				$$
				
				\noindent be the inducing path, $\nu$. Let $\gamma_{n+1}$ 
				denote $\beta$. 
				If $\nu$ has length one, then 
				it is directed or 
				bidirected and itself a $\mu$-connecting path/cycle regardless 
				of $C$. 
				Assume instead that 
				the length of $\nu$ is 
				strictly larger than one, and assume also first that $\alpha 
				\neq \beta$. 
				Let 
				$k$ be the maximal index in $\{1,\ldots,n\}$ such that there 
				exists an open 
				walk from $\alpha$ to $\gamma_k$ given $C$ which does not 
				contain $\beta$ 
				and only contains $\alpha$ once. 
				There is a $\mu$-connecting walk from 
				$\alpha$ to $\gamma_1 \neq \beta$ given $C$ and therefore $k$ 
				is always 
				well-defined.
				
				Let $\omega$ be the open walk from $\alpha$ to $\gamma_k$. If 
				$\gamma_k\in 
				An(C)$, then the 
				composition of $\omega$ with the edge $\gamma_k \leftrightarrow 
				\gamma_{k+1}$ 
				is open from $\alpha$ to $\gamma_{k+1}$ given $C$. By 
				maximality of $k$, we 
				must have $k = n$, and the composition is 
				therefore an open walk from $\alpha$ to $\beta$ on which 
				$\beta$ only 
				occurs once. We can reduce this to a $\mu$-connecting path 
				using arguments 
				like those in the 
				proof of Proposition \ref{prop:muConnPath}. Assume instead that 
				$\gamma_k\notin An(C)$. There is a directed path from 
				$\gamma_k$ to 
				$\alpha$ or to $\beta$. Let $\pi$ denote the subpath from 
				$\gamma_k$ to the 
				first occurrence of either $\alpha$ or $\beta$ on this directed 
				path. If 
				$\beta$ occurs first, 
				then the 
				composition of $\omega$ with $\pi$ gives an open walk from 
				$\alpha$ to 
				$\beta$. There is a head at 
				$\beta$ when moving from $\alpha$ to $\beta$ and therefore the 
				walk can be 
				reduced to a $\mu$-connecting path from $\alpha$ to $\beta$ 
				using the 
				arguments in the proof of Proposition \ref{prop:muConnPath}. If 
				$\alpha$ 
				occurs first, then the composition of $\pi^{-1}$ and the edge 
				$\gamma_k 
				\leftrightarrow \gamma_{k+1}$ gives a 
				$\mu$-connecting walk and it follows that $k = n$ by maximality 
				of 
				$k$. This walk is a $\mu$-connecting path.
				
				To argue that the open path is endpoint-identical if $\nu$ is 
				directed or 
				bidirected, let instead $k$ be the 
				maximal index such that there exists a $\mu$-connecting 
				walk from $\alpha$ to $\gamma_k$ with a head/tail at $\alpha$. 
				Using the 
				same argument as above, we see 
				that 
				the $\mu$-connecting path will be endpoint-identical to $\nu$ 
				in this case. In the directed case, note that in the case 
				$\gamma_k\notin 
				An(C)$ one can find a directed path form $\gamma_k$ to $\beta$, 
				and if 
				$\alpha$ occurs on this path one can simply choose the subpath 
				from 
				$\alpha$ to 
				$\beta$.
				
				In the case $\alpha = \beta$, analogous arguments can be made 
				by assuming 
				that $k$ is the maximal index such that there exists a 
				$\mu$-inducing path 
				from $\alpha$ to $\gamma_k$ given $C$ such that $\beta = 
				\alpha$ only 
				occurs 
				once.
			\end{proof}
			
			\begin{proof}[Proof of Propositions \ref{prop:biIP} and 
				\ref{prop:asymIP}]
				For both propositions it suffices to argue 
				that if there is a $\mu$-connecting walk in the larger graph, 
				then we 
				can also find a $\mu$-connecting walk in the smaller graph. 
				Using 
				Proposition \ref{prop:IPepiConnPath} we can find 
				endpoint-identical walks 
				that are open given $C\setminus \{\alpha\}$ and replacing 
				$\alpha 
				\starrightarrow \beta$ 
				with such a walk will give a walk which is open given $C$. For 
				Proposition 
				\ref{prop:asymIP} one should note that adding the edge respects 
				the 
				ancestry of the nodes due to transitivity.
			\end{proof}
			
			\begin{proof}[Proof of Proposition \ref{prop:noIPsep}]
				Assume there is no inducing path from $\alpha$ to $\beta$ and 
				let $\omega$ 
				be some walk from $\alpha$ to $\beta$ with a head at
				$\beta$. Note that $\omega$ must 
				have length at least 2. 
				
				$$
				\alpha = \gamma_0 \overset{e_0}{\sim} \gamma_1 
				\overset{e_1}{\sim} \ldots 
				\overset{e_{m-1}}{\sim} \gamma_m 
				\overset{e_m}{\starrightarrow} \beta.
				$$
				
				There must exist an $i \in \{0,1,\ldots,m\}$ such that 
				$\gamma_i$ is not 
				directedly collider-connected to $\beta$ along $\omega$ or such 
				that 
				$\gamma_i \notin An(\alpha, 
				\beta)$. Let $j$ be the largest such index. Note first that 
				$\gamma_m$ is 
				always directedly collider-connected to $\beta$ along $\omega$ 
				and $\gamma_0$ 
				is always in 
				$An(\alpha,\beta)$. If $j \neq m$ and $\gamma_j$ 
				is not directedly collider-connected to $\beta$ along $\omega$, 
				then 
				$\gamma_{j+1}$ is a noncollider 
				and
				$\omega$ is closed in $\gamma_{j+1} \in D(\alpha,\beta)$ (note 
				that $\alpha 
				= \gamma_{j+1}$ is impossible as there would then be an 
				inducing path from 
				$\alpha$ to $\beta$). If $j \neq 0$ and
				$\gamma_j \notin An(\alpha,\beta)$ then there is some $k \in 
				\{1,\ldots,j\}$ such that $\gamma_k$ 
				is a collider and $\gamma_k \notin An(\alpha,\beta)$ and 
				$\omega$ is 
				therefore closed in this collider.
			\end{proof}
			
			\begin{proof}[Proof of Proposition \ref{prop:SiIsPoSi}] We verify 
				that 
				(gs1)--(gs3) hold. \\
				\textbf{(gs1)} The edge $\alpha \leftrightarrow \beta$ 
				constitutes an 
				inducing path in both directions.
				
				\noindent \textbf{(gs2-3)} Let $\gamma \in V, C\subseteq V$ 
				such that 
				$\beta\in C$, and 
				assume that there is a $\mu$-connecting walk from 
				$\gamma$ to $\beta$ given $C$ in $\G$. This 
				walk has a head at $\beta$ and composing the walk with $\alpha 
				\leftrightarrow \beta$ creates an $\mu$-connecting walk from
				$\gamma$ to $\alpha$ given $C$.
			\end{proof}
			
			\begin{proof}[Proof of Lemma \ref{lem:AddPoSiME}]
				Any $\mu$-connecting walk in $\mathcal{G}$ is also present and 
				$\mu$-connecting in 
				$\mathcal{G}^+$, hence $\mathcal{I}(\mathcal{G}^+) \subseteq 
				\mathcal{I}(\mathcal{G})$.
				
				Assume $\gamma,\delta\in V, C \subseteq V$ and assume that 
				$\rho$ is a 
				$\mu$-connecting route from $\gamma$ to $\delta$ given $C$ in 
				$\mathcal{G}^+$. Let $e$ denote the edge $\alpha\leftrightarrow 
				\beta$. 
				Using (gs1), 
				there exist an inducing path from $\alpha$ to 
				$\beta$ in $\G$ and one from $\beta$ to $\alpha$. Denote these 
				by $\nu_1$ 
				and $\nu_2$. If $e$ is not in 
				$\rho$, then $\rho$ is also in $\mathcal{G}$ and 
				$\mu$-connecting as 
				the addition of the bidirected edge does not change the 
				ancestry of $\G$.
				
				If $e$ occurs twice in $\rho$ then it contains a subroute 
				$\alpha 
				\overset{e}{\leftrightarrow} \beta \overset{e}{\leftrightarrow} 
				\alpha$ and 
				$\alpha = \delta$  (or 
				with the roles 
				interchanged). Either one can find a $\mu$-connecting 
				subroute of $\rho$ with no occurrences of $e$ or $\alpha\notin 
				C$. If 
				$\beta\in C$, then compose the subroute of $\rho$ from $\gamma$ 
				to the 
				first occurrence of $\alpha$ (which is either trivial or can be 
				assumed to 
				have a tail at 
				$\alpha$) with the $\nu_1$-induced open walk from 
				$\alpha$ to $\beta$ using Proposition \ref{prop:IPepiConnPath}. 
				This is a 
				$\mu$-connecting walk in $\G$ from $\gamma$ to 
				$\beta$ and using (gs2) the result follows. If $\beta \notin 
				C$, 
				then the result follows from composing the subroute from 
				$\gamma$ to 
				$\alpha$ with the $\nu_1$-induced open walk from $\alpha$ to 
				$\beta$ and 
				the $\nu_2$-inducing open walk from 
				$\beta$ to $\alpha$.
				
				If $e$ only occurs once on 
				$\rho$, consider first a $\rho$ of the form
				
				\[
				\underbrace{\gamma \sim \ldots \sim \alpha}_{\rho_1} 
				\overset{e}{\leftrightarrow} 
				\underbrace{\beta \sim \ldots \starrightarrow \delta }_{\rho_2}.
				\]
				
				\noindent Assume first that $\alpha \notin C$. Let $\pi$ denote 
				the 
				$\nu_1$-induced open walk from $\alpha$ to $\beta$ and note 
				that $\pi$ 
				has a head at $\beta$. If $\gamma = 
				\alpha$ then $\pi$ composed with $\rho_2$ is a
				$\mu$-connecting walk from $\gamma$ to $\delta$ in $\G$. If 
				$\gamma 
				\neq 
				\alpha$ 
				we can just 
				replace $e$ with $\pi$, and the resulting composition of the
				walks $\rho_1$, $\pi$ and $\rho_2$ is a $\mu$-connecting
				walk from $\gamma$ to $\delta$ in $\G$. If instead $\alpha 
				\in C$, then $\gamma \neq \alpha$ and $\alpha$ is a collider on 
				$\rho$, 
				and
				$\rho_1$ thus has a head at $\alpha$ and is $\mu$-connecting
				from $\gamma$ to $\alpha$ given $C$ in $\G$. Using (gs3) we can 
				find a 
				$\mu$-connecting walk from $\gamma$ to 
				$\beta$ given $C$ in $\mathcal{G}$. Composing this with 
				$\rho_2$ gives a 
				$\mu$-connecting walk from 
				$\gamma$ to $\delta$ given $C$ in $\G$.
				
				If $\rho$ instead has the form
				
				\[
				\gamma \sim \ldots \sim \beta \overset{e}{\leftrightarrow} 
				\alpha \sim \ldots \starrightarrow \delta,
				\]
				
				\noindent a similar argument using (gs2) applies. In
				conclusion, $\mathcal{I}(\mathcal{G}) \subseteq
				\mathcal{I}(\mathcal{G}^+)$.  
			\end{proof}
			
			\begin{proof}[Proof of Proposition \ref{prop:PaIsPoPa}] We verify 
				that 
				(gp1)--(gp4) hold. \\
				\textbf{(gp1)} $\alpha \rightarrow \beta$ constitutes an 
				inducing 
				path from $\alpha$ to $\beta$. 
				
				\noindent \textbf{(gp2)}  Let $\omega$ be a $\mu$-connecting 
				walk from 
				$\gamma$ to 
				$\alpha$ given $C$, $\alpha\notin C$. Then $\omega$ composed 
				with 
				$\alpha\rightarrow \beta$ 
				is 
				$\mu$-connecting from $\gamma$ to $\beta$ given $C$.
				
				\noindent \textbf{(gp3)}  Let $\omega_1$ be a $\mu$-connecting 
				walk from 
				$\gamma$ to 
				$\beta$ given $C$, $\alpha\notin C, \beta\in C$, and let 
				$\omega_2$ be a 
				$\mu$-connecting 
				walk 
				from $\alpha$ to $\delta$ given $C$. The composition of 
				$\omega_1$, 
				$\alpha\rightarrow\beta$, and $\omega_2$ is $\mu$-connecting.
				
				\noindent \textbf{(gp4)}  Let $\omega$ be a $\mu$-connecting 
				walk from 
				$\beta$ to  
				$\gamma$ given $C\cup \{\alpha\}$, $\alpha\notin C$. If this 
				walk is closed 
				given $C$, 
				then there exists a collider on $\omega$, which is an ancestor 
				of $\alpha$ 
				and 
				not in $An(C)$. Let $\delta$ be the collider on $\omega$ with 
				this property 
				which is the closest to $\gamma$. Then we can find a directed 
				and open 
				path from $\delta$ to $\beta$ and composing the inverse of this 
				with the 
				subwalk of 
				$\omega$ from $\delta$ to $\gamma$ gives us a connecting walk. 
			\end{proof}
			
			\begin{proof}[Proof of Lemma \ref{lem:AddPoPaME}] As 
				\mbox{$An_{\mathcal{G}}(C) 
					\subseteq 
					An_{\mathcal{G}^+}(C)$} for all $C\subseteq V$, any 
				$\mu$-connecting 
				path 
				in $\mathcal{G}$ is also 
				$\mu$-connecting in $\mathcal{G}^+$, and it therefore follows 
				that 
				$\I(\G^+) 
				\subseteq 
				\I(\G)$. 
				
				We will prove the other inclusion by considering a 
				$\mu$-connecting walk 
				from 
				$\gamma$ to $\delta$ given $C$ in $\G^+$ and argue that we can 
				find 
				another $\mu$-connecting walk in $\G^+$ that fits into cases 
				(a) or (b) 
				below. 
				In both cases, we will use the potential parents properties 
				to argue 
				that there is also a $\mu$-connecting walk from $\gamma$ to 
				$\delta$ given 
				$C$ in $\G$. Let $e$ denote the edge $\alpha \rightarrow \beta$.
				
				Let  $\nu$ denote the inducing path 
				from $\alpha$ to $\beta$ in $\G$ which we know to exist by 
				(gp1) and 
				Proposition
				\ref{prop:noIPsep}. Say we 
				have a $\mu$-connecting walk in $\mathcal{G}^+$, $\omega$, 
				from $\gamma$ to $\delta$ given $C$. There can be two reasons 
				why $\omega$ 
				is not 
				$\mu$-connecting in 
				$\mathcal{G}$: 1) $e$ is in $\omega$, 2) there exist colliders, 
				$c_1,\ldots,c_k,$ on $\omega$, which are in 
				$An_{\mathcal{G}^+}(C)$ but not 
				in $An_{\mathcal{G}}(C)$. We will in this proof call such 
				colliders {\it 
					newly closed}. 	If there exists a newly closed collider on 
				$\omega$, 
				$c_i$, 
				then there exists in $\mathcal{G}$ a directed path from $c_i$ 
				to $\alpha$ on which no node is in $C$, and furthermore 
				$\alpha\notin C$. 
				Note that this path does not contain $\beta$, and the existence 
				of a 
				newly closed collider implies that $\beta \in 
				An_{\mathcal{G}}(C)$.
				
				Using Proposition 
				\ref{prop:muConnPath}, we can find a route, $\rho$, in $\G^+$ 
				from $\gamma$ 
				to 
				$\delta$, which is $\mu$-connecting in $\G^+$. Assume first 
				that $e$ occurs 
				at most 
				once on $\rho$. 
				If there are newly closed colliders on $\rho$, we will argue 
				that we can 
				find a $\mu$-connecting walk in $\G^+$ with no newly closed 
				colliders and 
				such 
				that $e$ occurs at most once. 
				Assume that 
				$c_1,\ldots,c_k$ are newly closed 
				colliders, ordered by their occurrences on the route $\rho$. We 
				allow 
				for 
				$k=1$, in which case $c_1 = c_k$. We will divide the argument
				into three cases, and we use 
				in all three cases that a $\mu$-connecting walk in $\G$ is also 
				present in 
				$\G^+$ and has no newly closed colliders nor occurrences of 
				$e$. We also 
				use that $\alpha \notin C$ when applying (gp2).
				
				\begin{enumerate}[label=(\roman*)]
					\item $e$ is between $\gamma$ and $c_1$ on $\rho$. \newline 
					Consider the subwalk of $\rho$ from $\gamma$ to the first 
					occurrence of 
					$\alpha$. If this subwalk has a tail at $\alpha$ (or is 
					trivial) then 
					we can compose 
					it with 
					the inverse 
					of the path from $c_k$ to $\alpha$ and the subwalk from 
					$c_k$ to 
					$\delta$. This walk is open. If there is a head at 
					$\alpha$, then using (gp2) we can find a $\mu$-connecting 
					walk 
					from $\gamma$ to $\beta$ in $\G$, compose it with $e$, the 
					inverse of 
					the path 
					from $c_k$ to $\alpha$ and the subwalk from $c_k$ to 
					$\delta$. This is 
					open as $\beta\in An_\G(C)$ and $\alpha \notin C$ whenever 
					there exist 
					newly closed 
					colliders.
					\item $e$ is between $c_k$ and $\delta$ on $\rho$. \newline 
					Consider the subwalk of $\rho$ from $\gamma$ to $c_1$, and 
					compose it 
					with the directed path from $c_1$ to $\alpha$. This is 
					$\mu$-connecting 
					in $\G$ and using (gp2) we can find a $\mu$-connecting walk 
					in $\G$ 
					from $\gamma$ to $\beta$. Composing this walk with the 
					subwalk of 
					$\rho$ from 
					$\beta$ to 
					$\delta$ gives a $\mu$-connecting walk from $\gamma$ to 
					$\delta$, 
					noting that $\beta \in An_\G(C)$. 
					\item $e$ is between $c_1$ and $c_k$ on $\rho$ or not on 
					$\rho$ at all. 
					\newline
					Composing the subwalk from $\gamma$ to $c_1$ with the
					directed path from $c_1$ to $\alpha$ gives a 
					$\mu$-connecting 
					walk from $\gamma$ to $\alpha$ given
					$C$ in $\mathcal{G}$, and by (gp2) 
					we 
					can 
					find a $\mu$-connecting walk from $\gamma$ to $\beta$ in 
					$\G$, thus 
					there 
					are no newly closed colliders on this walk and it does not 
					contain $e$. 
					Composing it with $e$, the directed path from $c_k$ to 
					$\alpha$ and the 
					subwalk from $c_k$ to $\delta$ gives a $\mu$-connecting 
					walk in $\G^+$.
				\end{enumerate}
				
				\noindent In all cases (i), (ii), and (iii) we have argued that 
				there 
				exists 
				a $\mu$-connecting 
				walk from $\gamma$ to $\delta$ in $\G^+$ 
				that contains no 
				newly closed colliders and that contains $e$ at most once.
				Denote 
				this walk by $\tilde{\omega}$. If $\tilde{\omega}$ does not 
				contain $e$ at 
				all, then we are done. Otherwise, two cases 
				remain, 
				depending on the orientation of $e$ in the 
				$\mu$-connecting walk $\tilde{\omega}$:
				
				\begin{enumerate}[label=(\alph*)]
					\item Assume first we have a walk
					of the form
					
					$$
					\gamma \sim \ldots \overset{e_\alpha}{\sim} \alpha 
					\rightarrow \beta 
					\sim 
					\ldots \starrightarrow \delta,
					$$
					
					\noindent If there is a tail on $e_\alpha$ at 
					$\alpha$, or if $\gamma=\alpha$, then we can 
					substitute $e$ with the open path between $\alpha$ and 
					$\beta$ induced 
					by 
					$\nu$ and obtain an open walk. Otherwise, assume a head on 
					$e_\alpha$ 
					at $\alpha$. $\tilde{\omega}$ is $\mu$-connecting in $\G^+$ 
					and 
					therefore $\alpha \notin C$. Using (gp2), there exists a 
					$\mu$-connecting 
					walk 
					from $\gamma$ to $\beta$, and composing this walk with the 
					(potentially 
					trivial) 
					subwalk from $\beta$ to $\delta$ gives a
					$\mu$-connecting walk from $\gamma$ to $\delta$ given
					$C$ in $\mathcal{G}$.
					
					\item Consider instead a walk of the form
					
					$$
					\gamma \sim \ldots \overset{e_\beta}{\sim} \beta 
					\leftarrow 
					\alpha 
					\sim \ldots \starrightarrow \delta.
					$$
					
					\noindent If there is a head on $e_\beta$ at $\beta$,
					$\beta$ is a collider. If $\beta \in C$,	then (gp3)
					directly gives a $\mu$-connecting walk from $\gamma$
					to $\delta$ given $C$ in $\mathcal{G}$. If instead $\beta 
					\in 
					An_{\G^+}(C)\setminus C$ then we can 
					find a 
					directed path, $\pi$, in $\G^+$ from $\beta$ to 
					$\varepsilon \in C$. 
					The 
					edge $e$ 
					is not present on $\pi$ and therefore we can compose the 
					subwalk from 
					$\gamma$ to $\beta$ with $\pi$, 
					$\pi^{-1}$, and the subwalk from $\beta$ to $\delta$ to 
					obtain an open 
					walk 
					from $\gamma$ to $\delta$ without any newly closed 
					colliders, only 
					one 
					occurrence of $e$, and such that there is a tail at
					$\beta$ just before the occurence of $e$. 
					
					We have reduced this case to walks, $\tilde{\omega}$, of 
					the form
					$$
					\underbrace{\gamma \sim \ldots \leftarrow 
						\beta}_{\tilde{\omega}_1}
					\leftarrow 
					\underbrace{\alpha 
						\sim \ldots \starrightarrow \delta}_{\tilde{\omega}_2},
					$$
					where $\tilde{\omega}_1$ is potentially trivial. Let 
					$\bar{\pi}$ denote 
					the $\nu$-induced open path or cycle 
					from $\alpha$ to $\beta$ in $\G$. Using Proposition
					\ref{prop:muConnPath} there is a $\mu$-connecting
					route, $\bar{\rho}$, 
					from $\alpha$ to $\delta$ given $C$ in $\G$. 
					If there is a tail at 
					$\alpha$ on 
					$\bar{\rho}$ 
					or on $\bar{\pi}$ then the composition of
					$\tilde{\omega}_1$, $\overline{\pi}$ and
					$\bar{\rho}$ is 
					$\mu$-connecting. Otherwise, if $\alpha \neq \beta$,
					the composition of $\overline{\pi}$ and
					$\bar{\rho}$ is a $\mu$-connecting walk from $\beta$ to 
					$\delta$ given 
					$C\cup 
					\{\alpha\}$ in $\G$ as $\alpha$ does not occur as a 
					noncollider on 
					this composition. Using 
					(gp4) there is also one given $C$. As 
					there is a tail at $\beta$ on $\tilde{\omega}$ we can 
					compose
					$\tilde{\omega}_1$ with this walk
					to 
					obtain 
					an open walk from $\gamma$ to $\delta$ given $C$ in 
					$\mathcal{G}$. If 
					$\alpha = \beta$ 
					the composition of $\tilde{\omega}_1$ with 
					$\tilde{\omega}_2$ is an 
					open walk from $\gamma$ to $\delta$ given $C$ in 
					$\mathcal{G}$.
				\end{enumerate}
				
				Assume finally that $e$ occurs 
				twice on $\rho$. In this case $\rho$ contains a subroute $\beta 
				\overset{e}{\leftarrow} 
				\alpha \overset{e}{\rightarrow} \beta$ and $\beta = \delta$. In 
				this case 
				$\alpha \notin 
				C$. If there are any newly closed colliders, consider the one 
				closest to 
				$\gamma$, $c$. The subroute of $\rho$ from $\gamma$ to $c$ 
				composed with 
				the directed path from $c$ to $\alpha$ gives a $\mu$-connecting 
				path and 
				(gp2) gives the result. Else if there is a head at $\alpha$ on 
				the 
				$\nu$-induced open walk then (gp2) again gives the result. 
				Otherwise, 
				compose the subroute from $\gamma$ to the first $\beta$, the 
				inverse of the 
				$\nu$-induced open walk, and the $\nu$-induced open walk to 
				obtain an open 
				walk in $\G$ from $\gamma$ to $\beta = \delta$.
			\end{proof}
			
			\begin{proof}[Proof of Theorem \ref{thm:maxElm}]
				Propositions 
				\ref{prop:SiIsPoSi} and \ref{prop:PaIsPoPa} show that 
				$\mathcal{N}$ is in 
				fact a supergraph of $\mathcal{G}$, and as $E^m$ only depends 
				on the 
				independence model, it also shows that $\mathcal{N}$ is a 
				supergraph of any 
				element in $[\mathcal{G}]$. We can sequentially add the edges 
				that are in 
				$\mathcal{N}$ but not in $\G$, and Lemmas \ref{lem:AddPoSiME} 
				and 
				\ref{lem:AddPoPaME} show that this is done Markov equivalently, 
				meaning 
				that $\mathcal{N} \in [\mathcal{G}]$.
			\end{proof}
			
			\begin{lem}
				Let $\alpha,\beta\in V$. If there is a directed edge, $e$, from 
				$\alpha$ to $\beta$, and a unidirected inducing path from 
				$\alpha$ to 
				$\beta$ of length at least two in $\mathcal{N}$, then there is 
				a directed 
				inducing path from 
				$\alpha$ to $\beta$ in $\mathcal{N} - e$.
				\label{lem:maxNondirToDir}
			\end{lem}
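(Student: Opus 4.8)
The plan is to shrink the given inducing path to length two, and then to recognise that a certain directed edge must already be present in the maximal graph $\mathcal{N}$; deleting $e$ then leaves exactly the edges needed to witness a directed inducing path.

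First I would write the unidirected inducing path as $\pi\colon\alpha\rightarrow\gamma_1\leftrightarrow\cdots\leftrightarrow\gamma_n\leftrightarrow\beta$ with $n\geq 1$ (if $\alpha=\beta$ then $e$ is a loop and $\pi$ a cycle, and the argument below runs verbatim). Since $e\colon\alpha\rightarrow\beta$ lies in $\mathcal{N}$ we have $\alpha\in An_{\mathcal{N}}(\beta)$, hence $An_{\mathcal{N}}(\alpha)\subseteq An_{\mathcal{N}}(\beta)$; as each $\gamma_i$ lies in $An_{\mathcal{N}}(\alpha)\cup An_{\mathcal{N}}(\beta)$, we get $\gamma_i\in An_{\mathcal{N}}(\beta)$ for all $i$, so $\pi$ is in fact a directed inducing path in $\mathcal{N}$. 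Also every edge of $\pi$ differs from $e$ (the edge $\alpha\rightarrow\gamma_1$ because $\gamma_1\neq\beta$, the rest because they are bidirected), so the whole of $\pi$ lies in $\mathcal{N}-e$.

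Next I would reduce to $n=1$. The tail $\gamma_1\leftrightarrow\cdots\leftrightarrow\gamma_n\leftrightarrow\beta$ is a bidirected inducing path from $\gamma_1$ to $\beta$ in $\mathcal{N}$: its internal nodes are colliders and all of its nodes lie in $An_{\mathcal{N}}(\beta)$. By Proposition \ref{prop:biIP}, adding $\gamma_1\leftrightarrow\beta$ does not change $\mathcal{I}(\mathcal{N})$; since $\mathcal{N}$ is maximal (Definition \ref{def:maximalDMG}) --- and the edge is trivially present if $\mathcal{N}$ is complete --- this forces $\gamma_1\leftrightarrow\beta\in\mathcal{N}$, an edge that is not $e$. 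Thus $\alpha\rightarrow\gamma_1\leftrightarrow\beta$ is a unidirected inducing path of length two from $\alpha$ to $\beta$ in $\mathcal{N}$, all of whose edges lie in $\mathcal{N}-e$, and I may replace $\pi$ by it.

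It now suffices to prove $\gamma_1\in An_{\mathcal{N}-e}(\beta)$: granting this, $\alpha\rightarrow\gamma_1\leftrightarrow\beta$ is a path in $\mathcal{N}-e$ with a head at $\beta$, with $\gamma_1$ a collider and $\gamma_1\in An_{\mathcal{N}-e}(\beta)$, i.e.\ a directed inducing path from $\alpha$ to $\beta$ in $\mathcal{N}-e$, so the lemma follows. I expect this last step to be the main obstacle. The plan for it is to show that $\gamma_1$ is a \emph{potential parent} of $\beta$ in $\mathcal{I}(\mathcal{N})$: since $\mathcal{N}$ is maximal, $\mathcal{N}=\mathcal{N}(\mathcal{I}(\mathcal{N}))$ by Theorem \ref{thm:maxElm}, so this gives $\gamma_1\rightarrow\beta\in\mathcal{N}$, and as $\gamma_1\neq\alpha$ this edge is not $e$, so $\gamma_1\in An_{\mathcal{N}-e}(\beta)$ as wanted. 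To verify (gp1)--(gp4) of Definition \ref{def:potPa} (equivalently the graphical conditions of Proposition \ref{def:graphicalPotPa}) I would use: for (gp1), the edge $\gamma_1\leftrightarrow\beta$ together with Corollary \ref{cor:ipNonSep}; and for (gp2)--(gp4), that the edges $\alpha\rightarrow\beta$, $\alpha\rightarrow\gamma_1$ and $\gamma_1\leftrightarrow\beta$ all lie in $\mathcal{N}$ (so that, by Propositions \ref{prop:PaIsPoPa} and \ref{prop:SiIsPoSi}, $\alpha$ is a potential parent of both $\gamma_1$ and $\beta$ and $\gamma_1,\beta$ are potential siblings), together with $\gamma_1\in An_{\mathcal{N}}(\beta)$ --- so that a $\mu$-connecting walk reaching $\gamma_1$ can be continued to $\beta$ along $\gamma_1\leftrightarrow\beta$ or along a directed path, and a $\mu$-connecting walk through $\beta$ (with $\beta$ in the conditioning set) can be turned back along $\gamma_1\leftrightarrow\beta$ and rerouted to the required target. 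Carrying out these walk-surgeries, in the spirit of the proofs of Lemmas \ref{lem:AddPoSiME} and \ref{lem:AddPoPaME}, is the technical heart of the argument; everything else is bookkeeping about which edges survive the deletion of $e$.
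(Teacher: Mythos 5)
Your proposal is correct, and it reaches the conclusion by a genuinely different route at the key step. Both proofs share the same skeleton: the edge $e$ forces every non-endpoint node of the inducing path into $An_{\mathcal{N}}(\beta)$, and maximality of $\mathcal{N}$ is then used to force the edges witnessing a directed inducing path to be present in $\mathcal{N}$ already (hence to survive the deletion of $e$). The paper keeps the whole path $\nu$ and shows each $\gamma_i\in An_{\mathcal{N}-e}(\beta)$ via a case split: if $\gamma_i\notin An_{\mathcal{N}}(\alpha)$, a directed path from $\gamma_i$ to $\beta$ cannot use $e$; if $\gamma_i\in An_{\mathcal{N}}(\alpha)$, the edge $\gamma_i\rightarrow\beta$ is shown to be addable Markov equivalently by an ad hoc substitution (replace it by $\gamma_i\rightarrow\cdots\rightarrow\alpha\rightarrow\beta$, or by $\gamma_i\leftrightarrow\beta$ when some node of that directed path lies in $C$), so maximality puts it in $F$. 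You instead first shrink the inducing path to $\alpha\rightarrow\gamma_1\leftrightarrow\beta$ --- a genuine simplification, legitimate because the lemma only asks for \emph{some} directed inducing path --- and then obtain $\gamma_1\rightarrow\beta\in F$ unconditionally by verifying that $\gamma_1$ is a potential parent of $\beta$, avoiding the paper's case split on $\gamma_i\in An_{\mathcal{N}}(\alpha)$. The sketched walk surgeries do go through: (gp1) follows from the inducing path $\gamma_1\leftrightarrow\beta$ and Corollary \ref{cor:ipNonSep}; (gp2) by appending $\gamma_1\leftrightarrow\beta$ when $\gamma_1\in An(C)$ and otherwise a directed path from $\gamma_1$ to $\beta$ (which then necessarily avoids $C$); (gp3) by splicing through $\beta\leftrightarrow\gamma_1$, using that $\beta\in C$ and $\gamma_1\in An_{\mathcal{N}}(\beta)$ give $\gamma_1\in An(C)$; and (gp4) by rerouting from $\beta$ along the inverse of a directed path to the newly closed collider nearest $\gamma$, exactly as in the proof of Proposition \ref{prop:PaIsPoPa}, with $\gamma_1\in An_{\mathcal{N}}(\beta)$ playing the role of the edge $\alpha\rightarrow\beta$ there. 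So your version trades the paper's short bespoke substitution for the heavier but more systematic potential-parent machinery; the paper's argument is more economical, while yours localizes all the work to a single pair $(\gamma_1,\beta)$ and a single length-two path.
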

			
			\begin{proof}[Proof of Lemma \ref{lem:maxNondirToDir}]
				Let $\nu$ denote the unidirected inducing path and 
				$\gamma_1,\ldots,\gamma_n$ the non-endpoint nodes of $\nu$. 
				Then 
				$\gamma_i \in An_\mathcal{N}(\{\alpha,\beta\})$ and also 
				$\gamma_i 
				\in 
				An_\mathcal{N}(\beta)$ due to the directed edge from $\alpha$
				to $\beta$. It follows that either $\gamma_i \in 
				An_\mathcal{N}(\alpha)$ 
				or $\gamma_i \in An_{(\mathcal{N} - e)}(\beta)$. If $\gamma_i 
				\in 
				An_\mathcal{N}(\alpha)$, let $e_i$ denote the directed edge 
				from $\gamma_i$ 
				to  
				$\beta$, and let $\mathcal{N}^+ = (V, F \cup \{e_i\})$. We will 
				argue that 
				$\mathcal{N} = \mathcal{N}^+$ using the 
				maximality of $\mathcal{N}$. Note first that the edge does not 
				change the 
				ancestry of the 
				graph in the sense that $An_\mathcal{N}(\gamma) = 
				An_{\mathcal{N}^+}(\gamma)$ for all $\gamma\in V$. Note also 
				that there is a
				bidirected inducing path between $\gamma_i$ and $\beta$ in 
				$\mathcal{N}$, 
				and therefore $\gamma_i \leftrightarrow_{\mathcal{N}} \beta$. 
				Assume 
				that 
				$e_i$ is in a $\mu$-connecting path in $\mathcal{N}^+$. There 
				is a directed 
				path from $\gamma_i$ to $\alpha$ in $\mathcal{N}$ and therefore 
				$e_i$ can 
				either be 
				substituted with $\gamma_i \rightarrow \alpha_i \rightarrow 
				\ldots 
				\rightarrow \alpha_k \rightarrow \alpha \rightarrow \beta$ (if 
				$\alpha_1,\ldots,\alpha_k,\alpha\notin C$), or with $\gamma_i 
				\leftrightarrow \beta$ (otherwise), and we see that 
				$\mathcal{I}(\mathcal{N}) = \mathcal{I}(\mathcal{N}^+)$. By 
				maximality of 
				$\mathcal{N}$ we have that 
				$\mathcal{N} = \mathcal{N}^+$ which implies that $e_i \in F$. 
				Thus 
				$\gamma_i \in 
				An_{(\mathcal{N} - e)}(\beta)$. This shows that $\nu$ is 
				also a 
				directed inducing path in $\mathcal{N} - e$.
			\end{proof}
			
			\begin{lem}
				Let edges $\alpha \rightarrow 
				\beta$, 
				$\beta \rightarrow \alpha$ and $\alpha \leftrightarrow \beta$ 
				be denoted by 
				$e_1, e_2, e_3$, respectively. If $e_1, e_3 \in F$, then 
				$\mathcal{N} - e_1 
				\in 
				[\mathcal{N}]$. If $e_1,e_2,e_3 \in F$, then $\mathcal{N} - e_3 
				\in 
				[\mathcal{N}]$.
				\label{lem:maxRemove}
			\end{lem}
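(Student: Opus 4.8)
The plan is to prove both statements in the form $\mathcal{I}(\mathcal{N}-e)=\mathcal{I}(\mathcal{N})$, i.e. $\mathcal{N}-e\in[\mathcal{N}]$, for $e=e_1$ in the first case and $e=e_3$ in the second. Since $\mathcal{N}-e$ is a subgraph of $\mathcal{N}$ the inclusion $\mathcal{I}(\mathcal{N})\subseteq\mathcal{I}(\mathcal{N}-e)$ is automatic, so it suffices to reproduce, inside $\mathcal{N}-e$, every $\mu$-connecting walk of $\mathcal{N}$. The engine for both parts is that maximality of $\mathcal{N}$ upgrades the hypothesised edges into a supply of further edges. Concretely, when $\alpha\rightarrow\beta$ and $\alpha\leftrightarrow\beta$ both lie in $\mathcal{N}$, any edge $\gamma\rightarrow\alpha$ in $\mathcal{N}$ forces $\gamma\rightarrow\beta$ in $\mathcal{N}$ (the path $\gamma\rightarrow\alpha\leftrightarrow\beta$ is a directed inducing path, so Proposition~\ref{prop:asymIP} and maximality apply), and any edge $\alpha\leftrightarrow\delta$ forces $\beta\leftrightarrow\delta$ (now $\beta\leftrightarrow\alpha\leftrightarrow\delta$ is a bidirected inducing path, use Proposition~\ref{prop:biIP}); when $e_1,e_2,e_3$ are all present the versions with $\alpha$ and $\beta$ interchanged hold as well, and one also gets $\alpha\rightarrow\alpha,\beta\rightarrow\beta\in F$ from the directed inducing paths $\alpha\rightarrow\beta\leftrightarrow\alpha$ and $\beta\rightarrow\alpha\leftrightarrow\beta$.

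For the first statement put $\mathcal{M}=\mathcal{N}-e_1$, so $\alpha\leftrightarrow\beta\in\mathcal{M}$. Given a $\mu$-connecting walk of $\mathcal{N}$, reduce it to a route by Proposition~\ref{prop:muConnPath} and then eliminate each occurrence of $e_1=\alpha\rightarrow\beta$. At such an occurrence the edge preceding $\alpha$ either has a tail at $\alpha$, or $\alpha$ is the first node; then replacing $e_1$ by $\alpha\leftrightarrow\beta$ keeps $\alpha$ a noncollider and leaves the head at $\beta$ unchanged. Otherwise the preceding edge has a head at $\alpha$, and after stripping any loop at $\alpha$ it is $\gamma\rightarrow\alpha$ or $\gamma\leftrightarrow\alpha$ with $\gamma\neq\alpha$; one contracts $\gamma\rightarrow\alpha\rightarrow\beta$ to the forced edge $\gamma\rightarrow\beta$ (respectively $\gamma\leftrightarrow\alpha\rightarrow\beta$ to $\gamma\leftrightarrow\beta$), which is endpoint-identical at $\gamma$ and at $\beta$ and lies in $\mathcal{M}$. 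The one genuine point to verify is that the colliders of the rewritten walk still lie in $An_{\mathcal{M}}(C)$: if a collider $c$ lay in $An_{\mathcal{N}}(C)\setminus An_{\mathcal{M}}(C)$, then every directed path from $c$ to $C$ in $\mathcal{N}$ would pass through $\alpha\rightarrow\beta$, so $c\in An_{\mathcal{N}}(\alpha)$ with $c\neq\alpha$ (as $\alpha$ is never a collider on these walks); but a directed path $c\rightarrow\ldots\rightarrow z_k\rightarrow\alpha$ together with the forced edge $z_k\rightarrow\beta$ and the surviving tail $\beta\rightarrow\ldots\rightarrow C$ yields a directed path from $c$ to $C$ inside $\mathcal{M}$, a contradiction. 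Hence the rewritten walk is $\mu$-connecting in $\mathcal{M}$, and $\mathcal{I}(\mathcal{M})=\mathcal{I}(\mathcal{N})$.

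For the second statement set $\mathcal{M}=\mathcal{N}-e_3$; since $e_3$ is bidirected, $An_{\mathcal{M}}=An_{\mathcal{N}}$, which removes the ancestry difficulty entirely. I would deduce $\mathcal{M}\in[\mathcal{N}]$ from Lemma~\ref{lem:AddPoSiME} by checking that $\alpha$ and $\beta$ are potential siblings in $\mathcal{I}(\mathcal{M})$, i.e. that (gs1)--(gs3) of Proposition~\ref{def:graphicalPotSib} hold in $\mathcal{M}$. Here (gs1) is Corollary~\ref{cor:ipNonSep} applied to the surviving edges $\alpha\rightarrow\beta$ and $\beta\rightarrow\alpha$. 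For (gs2), take a $\mu$-connecting walk from $\gamma$ to $\beta$ given $C$ with $\beta\in C$, strip trailing loops at $\beta$ so its last edge is $x\rightarrow\beta$ or $x\leftrightarrow\beta$ with $x\neq\beta$, and replace that last edge by the forced edge $x\rightarrow\alpha$ (from the directed inducing path $x\rightarrow\beta\leftrightarrow\alpha$, which is valid because $\beta\in An(\alpha)$ via $e_2$) or $x\leftrightarrow\alpha$; in the degenerate case $x=\alpha$ one instead appends the forced loop $\alpha\rightarrow\alpha$ to arrive at $\alpha$ with a head. This gives a $\mu$-connecting walk from $\gamma$ to $\alpha$ given $C$ in $\mathcal{M}$, and (gs3) is the mirror statement with $\alpha$ and $\beta$ exchanged, proved identically using that $\alpha\in An(\beta)$ via $e_1$.

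The step I expect to be the main obstacle is the ancestry bookkeeping in the first statement: deleting the directed edge $e_1$ can in principle strip ancestors, so one must verify that no collider of the rewritten walk gets deactivated, and this is exactly why the argument is organised around contracting $\gamma\rightarrow\alpha\rightarrow\beta$ (and around showing the forced edges are distinct from $e_1$) rather than naively swapping $\alpha\leftrightarrow\beta$ for $\alpha\rightarrow\beta$ at a head-into-$\alpha$ position. A secondary nuisance, routine but unavoidable, is the systematic treatment of loops at $\alpha$ and $\beta$ and of the endpoint coincidences $\gamma\in\{\alpha,\beta\}$, $\delta\in\{\alpha,\beta\}$, so that the walk surgery never produces a walk that fails to be nontrivial or fails to end with a head.
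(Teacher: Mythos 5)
Your overall strategy for the first statement (use maximality to force edges $\gamma\starrightarrow\beta$ from $\gamma\starrightarrow\alpha$, then perform walk surgery) is the paper's, but there is a genuine gap in the ancestry step, and it sits exactly where you place the parenthetical ``as $\alpha$ is never a collider on these walks.'' That claim is false: your surgery only touches occurrences of $e_1$, and a $\mu$-connecting route in $\mathcal{N}$ need not contain $e_1$ at all while still containing $\alpha$ as a collider. Concretely, take a route with a subwalk $\gamma_1\starrightarrow\alpha\starleftarrow\gamma_2$ in which the only directed path from $\alpha$ to $C$ in $\mathcal{N}$ is $\alpha\rightarrow\beta\rightarrow\cdots\rightarrow C$; then $\alpha\in An_{\mathcal{N}}(C)\setminus An_{\mathcal{N}-e_1}(C)$, your rewritten walk equals the original, and it is blocked at $\alpha$ in $\mathcal{N}-e_1$. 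Your contradiction argument for a deactivated collider $c$ genuinely requires $c\neq\alpha$ (you need a final edge $z_k\rightarrow\alpha$ to reroute through $z_k\rightarrow\beta$), so the case $c=\alpha$ must be treated separately. The repair is available from the machinery you already set up: replace the collider subwalk $\gamma_1\starrightarrow\alpha\starleftarrow\gamma_2$ by $\gamma_1\starrightarrow\beta\starleftarrow\gamma_2$ using the forced edges, and note that $\beta\in An_{\mathcal{N}-e_1}(C)$ whenever $\alpha\in An_{\mathcal{N}}(C)\setminus An_{\mathcal{N}-e_1}(C)$, since any directed path from $\alpha$ to $C$ must then pass through $e_1$ and its suffix from $\beta$ avoids $e_1$. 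This is precisely the extra case the paper's proof handles explicitly.

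The rest is sound. Your elimination of the occurrences of $e_1$ itself (contraction at a head-into-$\alpha$, substitution by $e_3$ at a tail or at the first node, plus the loop and endpoint bookkeeping) matches the paper's argument. For the second statement your route is genuinely different and valid: the paper only remarks that the proof is ``similar,'' whereas you reduce it to Lemma~\ref{lem:AddPoSiME} by verifying that $\alpha$ and $\beta$ are potential siblings in $\mathcal{I}(\mathcal{N}-e_3)$, getting (gs1) from the surviving inducing paths $e_1,e_2$ and (gs2)--(gs3) from the forced edges $x\starrightarrow\beta\Rightarrow x\starrightarrow\alpha$ (valid because $\beta\in An(\alpha)$ via $e_2$) and their mirror images; the observation that deleting a bidirected edge leaves ancestry unchanged makes this clean. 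That reduction buys a shorter argument at the price of invoking the heavier Lemma~\ref{lem:AddPoSiME}, and it is correct.
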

			
			\begin{proof}[Proof of Lemma \ref{lem:maxRemove}]
				Note that if edges $\gamma \starrightarrow 
				\alpha$, $\alpha \leftrightarrow \beta$, and $\alpha 
				\rightarrow \beta$ 
				are present in a maximal DMG, then so is $\gamma 
				\starrightarrow \beta$ 
				by Propositions \ref{prop:biIP} and \ref{prop:asymIP}. Assume 
				$e_1,e_3\in 
				E$. Using the above observation, note that every 
				vertex that is a parent of 
				$\alpha$ in 
				$\mathcal{N}$ is also 
				a parent of $\beta$, thus $An_\mathcal{N}(\delta) \setminus 
				\{\alpha\} = 
				An_{(\mathcal{N} - 
					e_1)}(\delta) \setminus \{\alpha\}$ for all $\delta \in V$. 
				Consider a 
				$\mu$-connecting 
				walk, $\omega$, in $\mathcal{N}$ given $C$. Any collider 
				different from 
				$\alpha$ on this walk is in $An_{(\mathcal{N} - e_1)}(C)$. If 
				$\alpha 
				\notin An_{(\mathcal{N} - 
					e_1)}(C) $ is a 
				collider, 
				then we can substitute the subwalk $\gamma_1 \starrightarrow 
				\alpha 
				\starleftarrow \gamma_2$ with $\gamma_1 \starrightarrow \beta 
				\starleftarrow \gamma_2$. If $e_1$ is the first edge on 
				$\omega$ and $\alpha$ the first node, then 
				just substitute 
				$e_1$ with $e_3$. Else, we need to consider two cases: in the 
				first case 
				there is a 
				subwalk 
				$\gamma 
				\starrightarrow \alpha \rightarrow \beta$ (or $\beta 
				\leftarrow \alpha \starleftarrow \gamma$) and therefore an edge 
				$\gamma 
				\starrightarrow \beta$ in $\mathcal{N} - e_1$ if $\gamma \neq 
				\alpha$. If 
				$\gamma = \alpha$, we can 
				simply remove the loop, replacing $e_1$ with $e_3$ if $\gamma$ 
				was the 
				final node on $\omega$. In the second 
				case, there is a subwalk 
				$\gamma 
				\leftarrow \alpha \rightarrow \beta$ (or $\beta 
				\leftarrow \alpha \rightarrow \gamma$), and we can substitute 
				$e_1$ with 
				$e_3$ if $\beta \neq \gamma$. If $\beta 
				= \gamma$, then we can substitute 
				$\beta \leftarrow \alpha \rightarrow \beta$ with $\beta 
				\leftrightarrow 
				\beta$.
				
				The proof of the other statement is
				similar.
			\end{proof}

			\begin{proof}[Proof of Proposition \ref{prop:dirEdgeSepInN}]
				One implication is immediate by contraposition: if $\alpha 
				\notin 
				u(\beta, \I(\mathcal{N} - e))$, then $\mathcal{N} - e \notin 
				[\mathcal{N}]$. 
				
				Assume $\alpha \in u(\beta, \mathcal{I}(\mathcal{N} - e))$. 
				There 
				exists an inducing path, $\nu$, from $\alpha$ to $\beta$ in 
				$\mathcal{N} - e$. If $\nu$ is directed, then the conclusion 
				follows from 
				Proposition \ref{prop:asymIP}. If $\nu$ is unidirected and of 
				length one, 
				then it is also directed. If it is unidirected and has length 
				at least two, 
				it follows from 
				Lemma \ref{lem:maxNondirToDir} that there also exists a 
				directed inducing path 
				in $\mathcal{N} - e$. Proposition \ref{prop:asymIP} finishes 
				the argument. 
				Assume that $\nu$ is bidirected. Then $\alpha 
				\leftrightarrow_\mathcal{N} \beta$ due to maximality and 
				Proposition 
				\ref{prop:biIP}. Lemma \ref{lem:maxRemove} gives the result.
			\end{proof}
			
			\begin{proof}[Proof of Proposition \ref{prop:bidirEdgeSepInN}]
				One implication follows by contraposition. Assume instead that 
				$\alpha \in 
				u(\beta, \I(\mathcal{N} - e))$ 
				and $\beta \in u(\alpha, \I(\mathcal{N} - e))$. Then there is 
				an inducing 
				path from $\alpha$ to $\beta$ and one from $\beta$ to $\alpha$ 
				in 
				$\mathcal{N} - e$. Denote these by $\nu_1$ and $\nu_2$. If one 
				of them is 
				bidirected, then the conclusion follows. Assume instead that 
				none of them 
				are bidirected and assume first that both are a single edge. 
				The conclusion 
				then follows using Lemma \ref{lem:maxRemove}.
				
				Assume now that $\nu_1$ or $\nu_2$ is an inducing path of 
				length at 
				least 2. Say that $\beta \rightarrow \gamma_1 \leftrightarrow 
				\ldots 
				\leftrightarrow \gamma_m 
				\leftrightarrow \alpha$ is an inducing path. If $\nu_1$ is the
				inducing path $\alpha 
				\rightarrow_\mathcal{N} \beta$ of length one, then there is 
				also a 
				bidirected inducing 
				path 
				between $\gamma_1$ and $\beta$ in $\mathcal{N}$, and there will 
				also be a 
				bidirected inducing path in $\mathcal{N} - e$ between $\alpha$ 
				and $\beta$. 
				If 
				instead $\nu_1$ is the 
				inducing path $\alpha \rightarrow \phi_1 \leftrightarrow 
				\ldots  
				\leftrightarrow \phi_k \leftrightarrow 
				\beta$ then $\gamma_1 \leftrightarrow_\mathcal{N} \phi_1$. In 
				this case 
				$\alpha \leftrightarrow \gamma_m \ldots 
				\gamma_1 \leftrightarrow \phi_1 
				\ldots \phi_k \leftrightarrow \beta$ can be trimmed down to a 
				bidirected 
				inducing path in 
				$\mathcal{N} - e$. 
			\end{proof}


\appendix

\bibliography{paperRef}

\begin{thebibliography}{47}

\bibitem{aalen2016}
\begin{barticle}[author]
\bauthor{\bsnm{Aalen},~\bfnm{OO}\binits{O.}},
  \bauthor{\bsnm{Røysland},~\bfnm{K}\binits{K.}},
  \bauthor{\bsnm{Gran},~\bfnm{JM}\binits{J.}},
  \bauthor{\bsnm{Kouyos},~\bfnm{R}\binits{R.}} \AND
  \bauthor{\bsnm{Lange},~\bfnm{T}\binits{T.}}
(\byear{2016}).
\btitle{Can we believe the {DAG}s? {A} comment on the relationship between
  causal {DAG}s and mechanisms}.
\bjournal{Statistical Methods in Medical Research}
\bvolume{25}
\bpages{2294-2314}.
\end{barticle}
\endbibitem

\bibitem{aalen1987}
\begin{barticle}[author]
\bauthor{\bsnm{Aalen},~\bfnm{Odd~O.}\binits{O.~O.}}
(\byear{1987}).
\btitle{Dynamic modelling and causality}.
\bjournal{Scandinavian Actuarial Journal}
\bpages{177-190}.
\end{barticle}
\endbibitem

\bibitem{aalen1980}
\begin{barticle}[author]
\bauthor{\bsnm{Aalen},~\bfnm{Odd~O.}\binits{O.~O.}},
  \bauthor{\bsnm{Borgan},~\bfnm{{\O}rnulf}\binits{{\O}.}},
  \bauthor{\bsnm{Keiding},~\bfnm{Niels}\binits{N.}} \AND
  \bauthor{\bsnm{Thormann},~\bfnm{Jens}\binits{J.}}
(\byear{1980}).
\btitle{Interaction between life history events. {N}onparametric analysis for
  prospective and retrospective data in the presence of censoring}.
\bjournal{Scandinavian Journal of Statistics}
\bvolume{7}
\bpages{161-171}.
\end{barticle}
\endbibitem

\bibitem{aalen2012}
\begin{barticle}[author]
\bauthor{\bsnm{Aalen},~\bfnm{Odd~O.}\binits{O.~O.}},
  \bauthor{\bsnm{R{\o}ysland},~\bfnm{Kjetil}\binits{K.}},
  \bauthor{\bsnm{Gran},~\bfnm{Jon~Michael}\binits{J.~M.}} \AND
  \bauthor{\bsnm{Ledergerber},~\bfnm{Bruno}\binits{B.}}
(\byear{2012}).
\btitle{Causality, mediation and time: {A} dynamic viewpoint}.
\bjournal{Journal of the Royal Statistical Society, Series A}
\bvolume{175}
\bpages{831-861}.
\end{barticle}
\endbibitem

\bibitem{ali2009}
\begin{barticle}[author]
\bauthor{\bsnm{Ali},~\bfnm{Ayesha~R.}\binits{A.~R.}},
  \bauthor{\bsnm{Richardson},~\bfnm{Thomas~S.}\binits{T.~S.}} \AND
  \bauthor{\bsnm{Spirtes},~\bfnm{Peter}\binits{P.}}
(\byear{2009}).
\btitle{Markov equivalence for ancestral graphs}.
\bjournal{The Annals of Statistics}
\bvolume{37}
\bpages{2808-2837}.
\end{barticle}
\endbibitem

\bibitem{commenges2009}
\begin{barticle}[author]
\bauthor{\bsnm{Commenges},~\bfnm{Daniel}\binits{D.}} \AND
  \bauthor{\bsnm{G{\'e}gout-Petit},~\bfnm{Anne}\binits{A.}}
(\byear{2009}).
\btitle{A general dynamical statistical model with causal interpretation}.
\bjournal{Journal of the Royal Statistical Society. Series B (Statistical
  Methodology)}
\bvolume{71}
\bpages{719-736}.
\end{barticle}
\endbibitem

\bibitem{constantinouExtCondIndep}
\begin{barticle}[author]
\bauthor{\bsnm{Constantinou},~\bfnm{Panayiota}\binits{P.}} \AND
  \bauthor{\bsnm{Dawid},~\bfnm{A.~Philip}\binits{A.~P.}}
(\byear{2017}).
\btitle{Extended conditional independence and applications in causal
  inference}.
\bjournal{Annals of Statistics}
\bvolume{45}
\bpages{1-36}.
\end{barticle}
\endbibitem

\bibitem{cox1996}
\begin{bbook}[author]
\bauthor{\bsnm{Cox},~\bfnm{D.~R.}\binits{D.~R.}} \AND
  \bauthor{\bsnm{Wermuth},~\bfnm{N.}\binits{N.}}
(\byear{1996}).
\btitle{Multivariate dependencies}.
\bpublisher{Chapman \& Hall, London}.
\end{bbook}
\endbibitem

\bibitem{danks2013}
\begin{binproceedings}[author]
\bauthor{\bsnm{Danks},~\bfnm{David}\binits{D.}} \AND
  \bauthor{\bsnm{Plis},~\bfnm{Sergey}\binits{S.}}
(\byear{2013}).
\btitle{Learning causal structure from undersampled time series}.
In \bbooktitle{JMLR: Workshop and Conference Proceedings (NIPS Workshop on
  Causality)}.
\end{binproceedings}
\endbibitem

\bibitem{dawidSeparoids}
\begin{barticle}[author]
\bauthor{\bsnm{Dawid},~\bfnm{A.~P.}\binits{A.~P.}}
(\byear{2001}).
\btitle{Separoids: {A} mathematical framework for conditional independence and
  irrelevance}.
\bjournal{Annals of Mathematics and Artificial Intelligence}
\bvolume{32}
\bpages{335-372}.
\end{barticle}
\endbibitem

\bibitem{didelez2000}
\begin{bphdthesis}[author]
\bauthor{\bsnm{Didelez},~\bfnm{Vanessa}\binits{V.}}
(\byear{2000}).
\btitle{Graphical models for event history analysis based on local
  independence},
\btype{PhD thesis},
\bpublisher{Universit{\"a}t Dortmund}.
\end{bphdthesis}
\endbibitem

\bibitem{didelez2006}
\begin{barticle}[author]
\bauthor{\bsnm{Didelez},~\bfnm{Vanessa}\binits{V.}}
(\byear{2007}).
\btitle{Graphical models for composable finite {M}arkov processes}.
\bjournal{Scandinavian Journal of Statistics}
\bvolume{34}
\bpages{169-185}.
\end{barticle}
\endbibitem

\bibitem{Didelez2008}
\begin{barticle}[author]
\bauthor{\bsnm{Didelez},~\bfnm{Vanessa}\binits{V.}}
(\byear{2008}).
\btitle{Graphical models for marked point processes based on local
  independence}.
\bjournal{Journal of the Royal Statistical Society, Series B}
\bvolume{70}
\bpages{245-264}.
\end{barticle}
\endbibitem

\bibitem{didelez2015}
\begin{binproceedings}[author]
\bauthor{\bsnm{Didelez},~\bfnm{Vanessa}\binits{V.}}
(\byear{2015}).
\btitle{Causal reasoning for events in continuous time: {A} decision-theoretic
  approach}.
In \bbooktitle{Proceedings of the UAI 2015 Workshop on Advances in Causal
  Inference}.
\end{binproceedings}
\endbibitem

\bibitem{eichler2012}
\begin{barticle}[author]
\bauthor{\bsnm{Eichler},~\bfnm{Michael}\binits{M.}}
(\byear{2012}).
\btitle{Graphical modelling of multivariate time series}.
\bjournal{Probability {T}heory and {R}elated {F}ields}
\bvolume{153}
\bpages{233-268}.
\end{barticle}
\endbibitem

\bibitem{eichler2013}
\begin{barticle}[author]
\bauthor{\bsnm{Eichler},~\bfnm{Michael}\binits{M.}}
(\byear{2013}).
\btitle{Causal inference with multiple time series: {P}rinciples and problems}.
\bjournal{Philosophical Transactions of the Royal Society}
\bvolume{371}
\bpages{1-17}.
\end{barticle}
\endbibitem

\bibitem{eichler2007}
\begin{binproceedings}[author]
\bauthor{\bsnm{Eichler},~\bfnm{Michael}\binits{M.}} \AND
  \bauthor{\bsnm{Didelez},~\bfnm{Vanessa}\binits{V.}}
(\byear{2007}).
\btitle{Causal reasoning in graphical time series models}.
In \bbooktitle{{P}roceedings of the 23rd {C}onference on {U}ncertainty in
  {A}rtificial {I}ntelligence}
\bpages{109-116}.
\end{binproceedings}
\endbibitem

\bibitem{eichler2010}
\begin{barticle}[author]
\bauthor{\bsnm{Eichler},~\bfnm{Michael}\binits{M.}} \AND
  \bauthor{\bsnm{Didelez},~\bfnm{Vanessa}\binits{V.}}
(\byear{2010}).
\btitle{On {G}ranger causality and the effect of interventions in time series}.
\bjournal{Lifetime Data Analysis}
\bvolume{16}
\bpages{3-32}.
\end{barticle}
\endbibitem

\bibitem{evans2016}
\begin{barticle}[author]
\bauthor{\bsnm{Evans},~\bfnm{Robin~J.}\binits{R.~J.}}
(\byear{2016}).
\btitle{Graphs for margins of {B}ayesian networks}.
\bjournal{Scandinavian Journal of Statistics}
\bvolume{43}
\bpages{625-648}.
\end{barticle}
\endbibitem

\bibitem{evans2014}
\begin{barticle}[author]
\bauthor{\bsnm{Evans},~\bfnm{Robin~J.}\binits{R.~J.}} \AND
  \bauthor{\bsnm{Richardson},~\bfnm{Thomas~S.}\binits{T.~S.}}
(\byear{2014}).
\btitle{Markovian acyclic directed mixed graphs for discrete data}.
\bjournal{The Annals of Statistics}
\bvolume{42}
\bpages{1452-1482}.
\end{barticle}
\endbibitem

\bibitem{frydenberg1990}
\begin{barticle}[author]
\bauthor{\bsnm{Frydenberg},~\bfnm{Morten}\binits{M.}}
(\byear{1990}).
\btitle{The chain graph {M}arkov property}.
\bjournal{Scandinavian Journal of Statistics}
\bvolume{17}
\bpages{333-353}.
\end{barticle}
\endbibitem

\bibitem{gegoutpetit2010}
\begin{barticle}[author]
\bauthor{\bsnm{G{\'e}gout-Petit},~\bfnm{Anne}\binits{A.}} \AND
  \bauthor{\bsnm{Commenges},~\bfnm{Daniel}\binits{D.}}
(\byear{2010}).
\btitle{A general definition of influence between stochastic processes}.
\bjournal{Lifetime Data Analysis}
\bvolume{16}
\bpages{33-44}.
\end{barticle}
\endbibitem

\bibitem{danks2016}
\begin{binproceedings}[author]
\bauthor{\bsnm{Hyttinen},~\bfnm{Antti}\binits{A.}},
  \bauthor{\bsnm{Plis},~\bfnm{Sergey}\binits{S.}},
  \bauthor{\bsnm{J\"arvisalo},~\bfnm{Matti}\binits{M.}},
  \bauthor{\bsnm{Eberhardt},~\bfnm{Frederick}\binits{F.}} \AND
  \bauthor{\bsnm{Danks},~\bfnm{David}\binits{D.}}
(\byear{2016}).
\btitle{Causal discovery from subsampled time series data by constraint
  optimization}.
In \bbooktitle{Proceedings of the Eighth International Conference on
  Probabilistic Graphical Models}
\bvolume{52}
\bpages{216-227}.
\end{binproceedings}
\endbibitem

\bibitem{jensen1986}
\begin{btechreport}[author]
\bauthor{\bsnm{Jensen},~\bfnm{An-Magritt}\binits{A.-M.}} \AND
  \bauthor{\bsnm{Schweder},~\bfnm{Tore}\binits{T.}}
(\byear{1986}).
\btitle{The engine of fertility - {I}nfluenced by interbirth employment?}
\btype{Discussion paper} No. \bnumber{15},
\bpublisher{Central Bureau of Statistics, Oslo}.
\end{btechreport}
\endbibitem

\bibitem{kandel1975}
\begin{barticle}[author]
\bauthor{\bsnm{Kandel},~\bfnm{D.}\binits{D.}}
(\byear{1975}).
\btitle{Stages in adolescent involvement in drug use}.
\bjournal{Science}
\bvolume{190}
\bpages{912-4}.
\end{barticle}
\endbibitem

\bibitem{koster1999}
\begin{barticle}[author]
\bauthor{\bsnm{Koster},~\bfnm{Jan T.~A.}\binits{J.~T.~A.}}
(\byear{1999}).
\btitle{On the validity of the {M}arkov interpretation of path diagrams of
  {G}aussian structural equations systems with correlated errors}.
\bjournal{Scandinavian Journal of Statistics}
\bvolume{26}
\bpages{413-431}.
\end{barticle}
\endbibitem

\bibitem{lauritzen1996}
\begin{bbook}[author]
\bauthor{\bsnm{Lauritzen},~\bfnm{Steffen}\binits{S.}}
(\byear{1996}).
\btitle{Graphical models}.
\bpublisher{Oxford: Clarendon}.
\end{bbook}
\endbibitem

\bibitem{lauritzen2017}
\begin{barticle}[author]
\bauthor{\bsnm{Lauritzen},~\bfnm{Steffen}\binits{S.}} \AND
  \bauthor{\bsnm{Sadeghi},~\bfnm{Kayvan}\binits{K.}}
(\byear{2018}).
\btitle{Unifying {M}arkov properties for graphical models}.
\bjournal{Annals of Statistics}
\bvolume{46}
\bpages{2251-2278}.
\end{barticle}
\endbibitem

\bibitem{meek1995}
\begin{binproceedings}[author]
\bauthor{\bsnm{Meek},~\bfnm{Christopher}\binits{C.}}
(\byear{1995}).
\btitle{Strong completeness and faithfulness in {B}ayesian networks}.
In \bbooktitle{Proceedings of the {E}leventh {C}onference on {U}ncertainty in
  {A}rtificial {I}ntelligence (UAI1995)}.
\end{binproceedings}
\endbibitem

\bibitem{meek2014}
\begin{binproceedings}[author]
\bauthor{\bsnm{Meek},~\bfnm{Christopher}\binits{C.}}
(\byear{2014}).
\btitle{Toward learning graphical and causal process models}.
In \bbooktitle{Proceedings of the UAI 2014 Workshop Causal Inference: Learning
  and Prediction}.
\end{binproceedings}
\endbibitem

\bibitem{mogensen2018}
\begin{binproceedings}[author]
\bauthor{\bsnm{Mogensen},~\bfnm{Søren~Wengel}\binits{S.~W.}},
  \bauthor{\bsnm{Malinsky},~\bfnm{Daniel}\binits{D.}} \AND
  \bauthor{\bsnm{Hansen},~\bfnm{Niels~Richard}\binits{N.~R.}}
(\byear{2018}).
\btitle{Causal learning for partially observed stochastic dynamical systems}.
In \bbooktitle{Proceedings of the 34th conference on Uncertainty in Artificial
  Intelligence}.
\end{binproceedings}
\endbibitem

\bibitem{who2004}
\begin{btechreport}[author]
\bauthor{\bparticle{{W}orld~{H}ealth} \bsnm{{O}rganization}}
(\byear{2004}).
\btitle{Neuroscience of psychoactive substance use and dependence}
\btype{Technical Report},
\bpublisher{World Health Organization}.
\end{btechreport}
\endbibitem

\bibitem{pearl2009}
\begin{bbook}[author]
\bauthor{\bsnm{Pearl},~\bfnm{Judea}\binits{J.}}
(\byear{2009}).
\btitle{Causality}.
\bpublisher{Cambridge University Press}.
\end{bbook}
\endbibitem

\bibitem{richardson2003}
\begin{barticle}[author]
\bauthor{\bsnm{Richardson},~\bfnm{Thomas}\binits{T.}}
(\byear{2003}).
\btitle{Markov properties for acyclic directed mixed graphs}.
\bjournal{Scandinavian Journal of Statistics}
\bvolume{30}
\bpages{145-157}.
\end{barticle}
\endbibitem

\bibitem{richardson2002}
\begin{barticle}[author]
\bauthor{\bsnm{Richardson},~\bfnm{Thomas}\binits{T.}} \AND
  \bauthor{\bsnm{Spirtes},~\bfnm{Peter}\binits{P.}}
(\byear{2002}).
\btitle{Ancestral graph {M}arkov models}.
\bjournal{The Annals of Statistics}
\bvolume{30}
\bpages{962-1030}.
\end{barticle}
\endbibitem

\bibitem{richardson2017}
\begin{bunpublished}[author]
\bauthor{\bsnm{Richardson},~\bfnm{Thomas~S.}\binits{T.~S.}},
  \bauthor{\bsnm{Evans},~\bfnm{Robin~J.}\binits{R.~J.}},
  \bauthor{\bsnm{Robins},~\bfnm{James~M.}\binits{J.~M.}} \AND
  \bauthor{\bsnm{Shpitser},~\bfnm{Ilya}\binits{I.}}
(\byear{2017}).
\btitle{Nested {M}arkov properties for acyclic directed mixed graphs}.
\bnote{https://arxiv.org/abs/1701.06686}.
\end{bunpublished}
\endbibitem

\bibitem{rogers2000}
\begin{bbook}[author]
\bauthor{\bsnm{Rogers},~\bfnm{L.~C.~G.}\binits{L.~C.~G.}} \AND
  \bauthor{\bsnm{Williams},~\bfnm{David}\binits{D.}}
(\byear{2000}).
\btitle{Diffusions, {M}arkov processes, and martingales}.
\bseries{Cambridge Mathematical Library}
\bvolume{2}.
\bpublisher{Cambridge University Press, Cambridge}
\bnote{It\^o calculus, Reprint of the second (1994) edition}.
\end{bbook}
\endbibitem

\bibitem{roysland2012}
\begin{barticle}[author]
\bauthor{\bsnm{R{\o}ysland},~\bfnm{Kjetil}\binits{K.}}
(\byear{2012}).
\btitle{Counterfactual analyses with graphical models based on local
  independence}.
\bjournal{Annals of Statistics}
\bvolume{40}
\bpages{2162-2194}.
\end{barticle}
\endbibitem

\bibitem{sadeghi2013}
\begin{barticle}[author]
\bauthor{\bsnm{Sadeghi},~\bfnm{Kayvan}\binits{K.}}
(\byear{2013}).
\btitle{Stable mixed graphs}.
\bjournal{Bernoulli}
\bvolume{19}
\bpages{2330-2358}.
\end{barticle}
\endbibitem

\bibitem{schweder1970}
\begin{barticle}[author]
\bauthor{\bsnm{Schweder},~\bfnm{Tore}\binits{T.}}
(\byear{1970}).
\btitle{Composable {M}arkov processes}.
\bjournal{Journal of Applied Probability}
\bvolume{7}
\bpages{400-410}.
\end{barticle}
\endbibitem

\bibitem{sokol2014}
\begin{barticle}[author]
\bauthor{\bsnm{Sokol},~\bfnm{Alexander}\binits{A.}} \AND
  \bauthor{\bsnm{Hansen},~\bfnm{Niels~Richard}\binits{N.~R.}}
(\byear{2014}).
\btitle{Causal interpretation of stochastic differential equations}.
\bjournal{Electronic Journal of Probability}
\bvolume{19}
\bpages{1-24}.
\end{barticle}
\endbibitem

\bibitem{spirtes1997}
\begin{btechreport}[author]
\bauthor{\bsnm{Spirtes},~\bfnm{P.}\binits{P.}},
  \bauthor{\bsnm{Richardson},~\bfnm{T.~S.}\binits{T.~S.}} \AND
  \bauthor{\bsnm{Meek},~\bfnm{C.}\binits{C.}}
(\byear{1997}).
\btitle{The dimensionality of mixed ancestral graphs}
\btype{Technical Report} No. \bnumber{CMU-PHIL-83},
\bpublisher{Philosophy Department, CMU}.
\end{btechreport}
\endbibitem

\bibitem{vanyukov2012}
\begin{barticle}[author]
\bauthor{\bsnm{Vanyukov},~\bfnm{Michael~M.}\binits{M.~M.}},
  \bauthor{\bsnm{Tarter},~\bfnm{Ralph~E.}\binits{R.~E.}},
  \bauthor{\bsnm{Kirillova},~\bfnm{Galina~P.}\binits{G.~P.}},
  \bauthor{\bsnm{Kirisci},~\bfnm{Levent}\binits{L.}},
  \bauthor{\bsnm{Reynolds},~\bfnm{Maureen~D.}\binits{M.~D.}},
  \bauthor{\bsnm{Kreek},~\bfnm{Mary~Jeanne}\binits{M.~J.}},
  \bauthor{\bsnm{Conway},~\bfnm{Kevin~P.}\binits{K.~P.}},
  \bauthor{\bsnm{Maher},~\bfnm{Brion~S.}\binits{B.~S.}},
  \bauthor{\bsnm{Iacono},~\bfnm{William~G.}\binits{W.~G.}},
  \bauthor{\bsnm{Bierut},~\bfnm{Laura}\binits{L.}},
  \bauthor{\bsnm{Neale},~\bfnm{Michael~C.}\binits{M.~C.}},
  \bauthor{\bsnm{Clark},~\bfnm{Duncan~B.}\binits{D.~B.}} \AND
  \bauthor{\bsnm{Ridenour},~\bfnm{Ty~A.}\binits{T.~A.}}
(\byear{2012}).
\btitle{Common liability to addiction and "gateway hypothesis": {T}heoretical,
  empirical and evolutionary perspective}.
\bjournal{Drug and Alcohol Dependence}
\bvolume{123}.
\end{barticle}
\endbibitem

\bibitem{vermaEquiAndSynthesis}
\begin{btechreport}[author]
\bauthor{\bsnm{Verma},~\bfnm{Thomas}\binits{T.}} \AND
  \bauthor{\bsnm{Pearl},~\bfnm{Judea}\binits{J.}}
(\byear{1991}).
\btitle{Equivalence and synthesis of causal models}
\btype{Technical Report} No. \bnumber{{R}-150},
\bpublisher{University of {C}alifornia, {L}os {A}ngeles}.
\end{btechreport}
\endbibitem

\bibitem{volf1999}
\begin{barticle}[author]
\bauthor{\bsnm{Volf},~\bfnm{Martin}\binits{M.}} \AND
  \bauthor{\bsnm{Studen{\'y}},~\bfnm{Milan}\binits{M.}}
(\byear{1999}).
\btitle{A graphical characterization of the largest chain graph}.
\bjournal{International Journal of Approximate Reasoning}
\bvolume{20}
\bpages{209-236}.
\end{barticle}
\endbibitem

\bibitem{xu2016}
\begin{binproceedings}[author]
\bauthor{\bsnm{Xu},~\bfnm{Hongteng}\binits{H.}},
  \bauthor{\bsnm{Farajtabar},~\bfnm{Mehrdad}\binits{M.}} \AND
  \bauthor{\bsnm{Zha},~\bfnm{Hongyuan}\binits{H.}}
(\byear{2016}).
\btitle{Learning {G}ranger causality for {H}awkes processes}.
In \bbooktitle{Proceedings of the 33rd International Conference on Machine
  Learning}.
\end{binproceedings}
\endbibitem

\bibitem{zhao2005}
\begin{barticle}[author]
\bauthor{\bsnm{Zhao},~\bfnm{Hui}\binits{H.}},
  \bauthor{\bsnm{Zheng},~\bfnm{Zhongguo}\binits{Z.}} \AND
  \bauthor{\bsnm{Liu},~\bfnm{Baijun}\binits{B.}}
(\byear{2005}).
\btitle{On the {M}arkov equivalence of maximal ancestral graphs}.
\bjournal{Science in China, Series A: Mathematics}
\bvolume{48}
\bpages{548-562}.
\end{barticle}
\endbibitem

\end{thebibliography}

\end{document}